\DeclareSymbolFontAlphabet{\mathbb}{AMSb} 
\DeclareSymbolFontAlphabet{\mathbbl}{bbold} 
\numberwithin{equation}{section}
\newtheorem{theorem}[subsection]{Theorem}
\newtheorem{corollary}[subsection]{Corollary}
\newtheorem{lemma}[subsection]{Lemma}
\newtheorem{proposition}[subsection]{Proposition}
\newtheorem{conjecture}[subsection]{Conjecture}
\theoremstyle{definition}
\newtheorem{definition}[subsection]{Definition}
\newtheorem{remark}[subsection]{Remark}
\newtheorem{example}[subsection]{Example}
\newtheorem{construction}[subsection]{Construction}
\newcommand{\cal}{\mathcal}
\newcommand{\arxivlink}[1]{\href{http://arxiv.org/abs/#1}{\texttt{arXiv:#1}}}
\title[]{On the residue sequence in \\ Logarithmic Topological Cyclic Homology}
\author{Tommy Lundemo}
\address{Department of Mathematics and Informatics, University of Wuppertal, Germany}
\email{lundemo@uni-wuppertal.de}
\begin{document}
\maketitle

\begin{abstract} As a localizing invariant, ${\rm THH}$ participates in localization sequences of cyclotomic spectra. We resolve a conjecture of Rognes by relating these to residue sequences in logarithmic ${\rm THH}$. Consequently, logarithmic ${\rm THH}$, ${\rm TR}$, and ${\rm TC}$ serve as strict generalizations of the constructions of Hesselholt--Madsen and Blumberg--Mandell, which moreover enjoy localization sequences without the regularity hypotheses usually required for d\'evissage. Combined with work of Ramzi--Sosnilo--Winges, our results imply that there exists a stable $\infty$-category ${\cal C}$ such that ${\rm THH}({\cal C})$, ${\rm TR}({\cal C})$, and ${\rm TC}({\cal C})$ realize the relevant logarithmic term for specific log structures, such as the natural ones on discrete valuation rings, connective complex $K$-theory, and truncated Brown--Peterson spectra. Finally, we conjecture that the category ${\cal C}$ can be chosen to reflect the additional structure present on the logarithmic terms, and we give evidence for this in the case of discrete valuation rings.
\end{abstract}

\section{Introduction}

One fundamental feature of algebraic $K$-theory is its localization property. For example, if $A$ is a regular Noetherian ring and $x$ is an element of $A$ such that $A/x$ is regular, Quillen \cite[Section 5]{Qui73} establishes a long exact sequence \begin{equation}\label{eq:quillenles}\cdots \to K_{n}(A/x) \to K_n(A) \to K_n(A[x^{- 1}]) \to K_{n - 1}(A / x) \to \cdots .\end{equation} From a modern perspective, two ingredients go into its construction. The first is that algebraic $K$-theory is a \emph{localizing invariant}: As a functor from (small and idempotent complete) stable $\infty$-categories ${\rm Cat}_{\infty}^{\rm perf}$ to spectra, it sends exact/Karoubi sequences\footnote{By definition, a \emph{Karoubi sequence} in ${\rm Cat}_{\infty}^{\rm perf}$ is a sequence ${\cal C} \xrightarrow{i} {\cal D} \xrightarrow{p} {\cal E}$ with $i$ fully faithful, $p \circ i$ null, and the induced map ${\cal D} / {\cal C} \xrightarrow{} {\cal E}$ an equivalence after idempotent completion.} to cofiber sequences. The algebraic $K$-groups considered above are recovered as the homotopy groups $K_n(A) := \pi_nK({\rm Perf}(A))$ with ${\rm Perf}(A)$ the category of perfect (equivalently, compact) complexes of $A$-modules. A typical example of a Karoubi sequence is \begin{equation}\label{eq:xnilkaroubi}{\rm Perf}(A)^{x - {\rm nil}} \to {\rm Perf}(A) \to {\rm Perf}(A[x^{-1}]),\end{equation} inolving the $x$-nilpotent modules ${\rm Perf}(A)^{x - {\rm nil}}$. The regularity hypotheses ensure that restriction of scalars induces a functor ${\rm Perf}(A / x) \to {\rm Perf}(A)^{x - {\rm nil}}$, and they moreover allow for \emph{d\'evissage}: This functor induces an equivalence on $K$-theory. From this, the sequence \eqref{eq:quillenles} is obtained from the long exact sequence induced on $K$-theory from the Karoubi sequence \eqref{eq:xnilkaroubi}. 

D\'evissage fails for most other localizing invariants $E$ of interest, such as topological Hochschild, restriction, and cyclic homology. While Karoubi sequences \eqref{eq:xnilkaroubi} will still produce cofiber sequences for these theories, their practical value is often stunted by the simplicity of the cofiber term: For example, we have ${\rm THH}({\Bbb Z}_{(p)}[p^{-1}]) \simeq {\Bbb Q}$. An example in the context of ${\Bbb E}_{\infty}$-rings is the equivalence ${\rm THH}({\rm ku}[\beta^{-1}]) \simeq {\rm KU}[K({\Bbb Z}, 3)]$ (cf.\ \cite[Theorem 5.21]{Sto20}), which comes with the additional issue that it is not bounded below, and as such does not fit in the Nikolaus--Scholze approach to cyclotomic spectra \cite{NS18}. 

One way to address this issue is to consider the cofiber $E(A | x)$ of the map $E(A / x) \to E(A)$ given by restriction of scalars. Structured variants of this approach appear in the work of Hesselholt--Madsen \cite{HM03} and Blumberg--Mandell \cite{BM20}, as we elaborate upon later in this introduction. 

Another method is due to Rognes \cite{Rog09}, who uses ideas from log geometry to define the middle term of a cofiber sequence \cite[Theorem 1.4]{RSS25} \begin{equation}\label{eq:rssres}{\rm THH}(A) \xrightarrow{} {\rm THH}(A, \langle x \rangle) \xrightarrow{\partial^{\rm rep}} {\rm THH}(A / x)[1]\end{equation} of ${\rm THH}(A)$-modules in cyclotomic spectra. This is a fundamentally different approach and does not make use of the additional functoriality ${\rm THH}$ enjoys as a localizing invariant. Instead, it is an analog of the classical residue sequence \[0 \to \Omega^1_A \to \Omega^1_{(A, \langle x \rangle)} \to A / x \to 0\] involving log differential forms $\Omega^1_{(A, \langle x \rangle)}$. Rognes' construction ${\rm THH}(A, \langle x \rangle)$ naturally carries the structure of an ${\Bbb E}_{\infty}$-ring, while the abstractly defined cofiber ${\rm THH}(A | x)$ is, \emph{a priori}, merely a ${\rm THH}(A)$-module. 

The purpose of this paper is to resolve (a generalization of) a conjecture from Rognes' ICM-address \cite[Conjecture 7.5]{Rog14}, which predicts that these two approaches are equivalent. Let $\partial \colon {\rm THH}(A | x) \xrightarrow{} {\rm THH}(A / x)[1]$ denote the ${\rm THH}(A)$-module map obtained by shifting the cofiber sequence defining ${\rm THH}(A | x)$. In the context of discrete rings, our main result reads:

\begin{theorem}\label{thm:mainthm} Let $A$ be a commutative ring and let $x \in A$ be a non-zero divisor. There is an equivalence \[\varphi \colon {\rm THH}(A, \langle x \rangle) \xrightarrow{\simeq} {\rm THH}(A | x)\] of ${\rm THH}(A)$-modules in cyclotomic spectra, and $\partial^{\rm rep} \simeq \partial \circ \varphi$. 
\end{theorem}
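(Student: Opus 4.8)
The plan is to identify both $\mathrm{THH}(A,\langle x\rangle)$ and $\mathrm{THH}(A|x)$ with a third object built from a suitable category of modules, and to match up the residue maps along the way. The key comparison tool is the localization/dévissage-free philosophy: since $x$ is a non-zero divisor, restriction of scalars gives a functor $\mathrm{Perf}(A/x)\to\mathrm{Perf}(A)^{x\text{-}\mathrm{nil}}$, and the Karoubi sequence $\mathrm{Perf}(A)^{x\text{-}\mathrm{nil}}\to\mathrm{Perf}(A)\to\mathrm{Perf}(A[x^{-1}])$ yields, after applying $\mathrm{THH}$ as a localizing invariant, a cofiber sequence of cyclotomic spectra whose fiber term computes $\mathrm{THH}(A|x)[-1]$ by definition. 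The first step is therefore to give a clean model for the connecting map $\partial$ purely in terms of this categorical input.

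Second, I would recall (or reprove) Rognes' description of $\mathrm{THH}(A,\langle x\rangle)$. The log ring $(A,\langle x\rangle)$ has $\mathrm{THH}(A,\langle x\rangle)$ sitting in a cofiber sequence $\mathrm{THH}(A)\to\mathrm{THH}(A,\langle x\rangle)\xrightarrow{\partial^{\mathrm{rep}}}\mathrm{THH}(A/x)[1]$ of $\mathrm{THH}(A)$-modules in cyclotomic spectra, as in \eqref{eq:rssres} via \cite[Theorem 1.4]{RSS25}; concretely this uses the "repletion" construction and the identification of the replete bar construction of $\langle x\rangle$ with a circle action. The essential point is that the boundary $\partial^{\mathrm{rep}}$ arises from the same cyclotomic structure on $\mathrm{THH}(A/x)[1]$ as does $\partial$ — both come from the $A/x$-linear cyclotomic structure, the only subtlety being the cyclotomic Frobenius and its interaction with the Tate construction.

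Third — and this is where the work concentrates — I would construct the equivalence $\varphi$ by producing a map in one direction and checking it is an equivalence on underlying spectra, then upgrading. The natural map goes $\mathrm{THH}(A,\langle x\rangle)\to\mathrm{THH}(A|x)$: the inclusion $A\to A[x^{-1}]$ of the trivial-log locus, together with the fact that $\mathrm{THH}$ of a log ring inverting the log structure recovers $\mathrm{THH}$ of the localization, gives a map $\mathrm{THH}(A,\langle x\rangle)\to\mathrm{THH}(A[x^{-1}])$ which, composed appropriately, factors the localization map and hence induces a map of fibers/cofibers. Equivalently, one uses that $\mathrm{THH}(A,\langle x\rangle)$ is the pullback (or an appropriate fiber) of $\mathrm{THH}(A)\to\mathrm{THH}(A[x^{-1}])\leftarrow\mathrm{THH}(A[x^{-1}])$ in a way parallel to how $\mathrm{THH}(A|x)$ is defined, and the non-zero-divisor hypothesis ensures the two fiber terms agree because both compute $\mathrm{THH}(A/x)[1]$ with its canonical $\mathrm{THH}(A)$-module and cyclotomic structure. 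Once $\varphi$ is a map of $\mathrm{THH}(A)$-modules in cyclotomic spectra fitting in a map of cofiber sequences with the identity on $\mathrm{THH}(A)$ and a comparison equivalence on the $\mathrm{THH}(A/x)[1]$ terms, the five-lemma in the stable setting forces $\varphi$ to be an equivalence, and the compatibility $\partial^{\mathrm{rep}}\simeq\partial\circ\varphi$ is exactly the commutativity of the right-hand square.

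The main obstacle I anticipate is the \emph{cyclotomic} refinement: matching the boundary maps as maps of $\mathrm{THH}(A)$-modules in $\mathrm{CycSp}$ (not merely in spectra or in $\mathrm{THH}(A)$-modules) requires controlling the cyclotomic Frobenius on $\mathrm{THH}(A/x)[1]$ coming from each side — from the repletion picture on one side and from the localizing-invariant/Karoubi-sequence picture on the other. The shift by $[1]$ is not cyclotomic in an obvious naive sense, so one must check that the genuine cyclotomic structure produced by the localization sequence (via the functoriality of $\mathrm{THH}$ on $\mathrm{Cat}_\infty^{\mathrm{perf}}$ and the Verdier-quotient presentation) is the same as the one Rognes' construction produces. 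I would handle this by reducing to a universal case — e.g. $A=\mathbb{Z}[x]$ or a polynomial ring, where everything is explicit and $\mathrm{THH}$ is computed by the cyclic/replete bar constructions — and then base-changing along $\mathbb{Z}[x]\to A$, using that both constructions are compatible with this base change and that the non-zero-divisor hypothesis is exactly what makes the base change of the fiber term have the expected form.
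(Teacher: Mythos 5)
Your high-level strategy — reduce to a universal case and base-change along $\mathbb{Z}[t] \to A$ — is indeed the paper's strategy (the paper works over $\mathbb{S}[t_{2d}]$, and for Theorem~\ref{thm:mainthm} over $\mathbb{S}[t]$). But there are two substantive problems with the proposal as written.

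First, a conceptual confusion at the outset: you write that the Karoubi sequence $\mathrm{Perf}(A)^{x\text{-nil}} \to \mathrm{Perf}(A) \to \mathrm{Perf}(A[x^{-1}])$ yields a cofiber sequence whose fiber term "computes $\mathrm{THH}(A|x)[-1]$ by definition," and later you suggest that $\mathrm{THH}(A,\langle x\rangle)$ should be the appropriate fiber/pullback built from $\mathrm{THH}(A) \to \mathrm{THH}(A[x^{-1}])$, with both fiber terms computing $\mathrm{THH}(A/x)[1]$. This is not correct. The fiber of $\mathrm{THH}(A) \to \mathrm{THH}(A[x^{-1}])$ is $\mathrm{THH}(\mathrm{Perf}(A)^{x\text{-nil}})$, which is \emph{not} equivalent to $\mathrm{THH}(A/x)$ — the whole point of this subject is that d\'evissage fails for $\mathrm{THH}$. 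The cofiber $\mathrm{THH}(A|x)$ is defined from the non-Karoubi sequence built from the transfer $p_* \colon \mathrm{Perf}(A/x) \to \mathrm{Perf}(A)$, and relating it to the Karoubi sequence involves the map $f \colon \mathrm{THH}(A|x) \to \mathrm{THH}(A[x^{-1}])$ as in diagram \eqref{eq:t2dnilcofiberseq}, which is a split injection but far from an equivalence.

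Second, and more decisively, the proposal offers no mechanism for the genuinely hard part: once you have reduced to the universal case, you still need to upgrade the identification of cyclotomic spectra (and of boundary maps) to one of $\mathrm{THH}(\mathbb{S}[t])$-modules in cyclotomic spectra, and mapping spectra in $\mathrm{Mod}_{\mathrm{THH}(\mathbb{S}[t])}(\mathrm{CycSp})$ are not under control. "Everything is explicit" in the universal case is not a substitute for a proof: the paper explicitly notes (before Theorem~\ref{thm:gradedagreement}) that the relevant ungraded mapping spectra $\mathrm{map}_{\mathrm{Mod}_{\mathrm{THH}(\mathbb{S}[t])}(\mathrm{CycSp})}(-, \mathbb{S}^{\rm triv}[1])$ are probably complicated and that the forgetful functor to $\mathrm{CycSp}$ does not retain enough information. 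The paper's essential idea — entirely absent from your proposal — is to pass to \emph{graded} cyclotomic spectra, where a weight argument (Lemma~\ref{lem:gradedcontrol}) shows that forgetting the $\mathrm{THH}(\mathbb{S}[t])$-module structure is an equivalence on the relevant mapping spectra, so that the comparison of boundary maps can be checked in $\mathrm{CycSp}$ (Theorem~\ref{thm:gradedagreement}, Corollary~\ref{cor:residuegradedspec}). There is also a surprisingly subtle consistency check (Proposition~\ref{prop:mainund}) that the graded lift of the transfer recovers the $\mathrm{THH}(\mathbb{S}[t])$-module structure coming from the projection formula, which your proposal does not touch. Finally, the five-lemma step you invoke presupposes already having a commutative diagram of cofiber sequences, i.e.\ already knowing the boundary maps agree — that is precisely what needs to be proved.
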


We observe that the regularity hypotheses have disappeared. Indeed, even that $x \in A$ is a non-zero divisor can be relaxed: It is only imposed to ensure that the quotient map $p \colon A \to A/x$ exhibits the target as a perfect module over the source, so that restriction of scalars restricts to a functor $p_* \colon {\rm Perf}(A / x) \to {\rm Perf}(A)$. This hypothesis can be removed if we instead allow ourselves to work with the derived quotient $A /\!/ x := A \otimes_{{\Bbb Z}[t]}^{\Bbb L} {\Bbb Z}$ throughout, where the ring map ${\Bbb Z}[t] \to A$ sends $t$ to $x$. 

One essential ingredient in the proof of Theorem \ref{thm:mainthm} is that both logarithmic ${\rm THH}$ and the cofiber ${\rm THH}(A | x)$ admit convenient descriptions as \emph{graded} cyclotomic spectra, see Theorem \ref{thm:gradedagreement} in particular. After introducing the basic context of the argument, we give an outline of the proof in Section \ref{subsec:proofoutline}. 

\subsection{Logarithmic modules} There is no agreed-upon notion of quasicoherent sheaves in the context of logarithmic geometry. Consequently, there is no obvious category ${\rm Perf}(A, \langle x \rangle)$ in ${\rm Cat}_\infty^{\rm perf}$ that recovers Rognes' ${\rm THH}(A, \langle x \rangle)$ as the ordinary ${\rm THH}({\rm Perf}(A, \langle x \rangle))$. We note that the functor ${\rm Perf}(A/x) \to {\rm Perf}(A)$ is not fully faithful (the obstruction is the failure of $A \to A/x$ being a homological epimorphism; that is, the higher homotopy groups of the derived tensor product $A/x \otimes_A^{\Bbb L} A/x$), and so there is no Karoubi sequence of the form \[{\rm Perf}(A/x) \to {\rm Perf}(A) \to {\rm Perf}(A, \langle x \rangle).\] Among the many spectacular recent advancements in our understanding of localizing invariants and motives, the very recent work of Ramzi--Sosnilo--Winges \cite{RSW25} is particularly relevant to us: They prove that the universal localizing invariant ${\cal U}_{\rm loc} \colon {\rm Cat}_{\infty}^{\rm perf} \to {\rm Mot}_{\rm loc}$ of \cite{BGT13} is a Dwyer--Kan localization. Thus, we may take ${\rm Perf}(A, \langle x \rangle)$ to be any choice realizing the cofiber ${\cal M}$ of the induced map ${\cal U}_{\rm loc}{\rm Perf}(A/x) \to {\cal U}_{\rm loc}{\rm Perf}(A)$ of localizing motives as ${\cal U}_{\rm loc}({\rm Perf}(A, \langle x \rangle))$. By construction, $E({\rm Perf}(A, \langle x \rangle))$ will realize the cofiber of $E({\rm Perf}(A/x)) \to E({\rm Perf}(A))$ for any localizing invariant $E$. This choice can even be made functorial \cite[Proposition 2.8]{RSW25}. Theorem \ref{thm:mainthm} implies: 

\begin{theorem}\label{thm:logmodules} Let $A$ be a commutative ring and let $x \in A$  be a non-zero divisor. There exists a stable $\infty$-category ${\rm Perf}(A, \langle x \rangle)$ and an equivalence \[{\rm THH}(A, \langle x \rangle) \xrightarrow{\simeq} {\rm THH}({\rm Perf}(A, \langle x \rangle))\] of ${\rm THH}(A)$-modules in cyclotomic spectra. \qed
\end{theorem}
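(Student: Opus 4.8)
The plan is to deduce Theorem \ref{thm:logmodules} directly from Theorem \ref{thm:mainthm} together with the description of $\mathrm{Perf}(A, \langle x\rangle)$ afforded by the Ramzi--Sosnilo--Winges localization theorem, recalled just above the statement. Concretely, by \cite{RSW25} the universal localizing invariant $\mathcal{U}_{\mathrm{loc}}\colon \mathrm{Cat}_\infty^{\mathrm{perf}} \to \mathrm{Mot}_{\mathrm{loc}}$ is a Dwyer--Kan localization, so the map of motives $\mathcal{U}_{\mathrm{loc}}\mathrm{Perf}(A/x) \to \mathcal{U}_{\mathrm{loc}}\mathrm{Perf}(A)$ (induced by restriction of scalars along $p\colon A \to A/x$, which lands in $\mathrm{Perf}(A)$ because $x$ is a non-zero divisor) has a cofiber $\mathcal{M}$ that is itself of the form $\mathcal{U}_{\mathrm{loc}}(\mathcal{C})$ for some stable idempotent-complete $\infty$-category $\mathcal{C}$; we set $\mathrm{Perf}(A,\langle x\rangle) := \mathcal{C}$. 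I would remark that by \cite[Proposition 2.8]{RSW25} this choice can be made functorially in the pair $(A,x)$, which is needed if one wants the comparison to respect further structure, but for the bare statement any choice suffices.

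The key point is then that $\mathrm{THH}$ is a localizing invariant valued in cyclotomic spectra: it factors through $\mathcal{U}_{\mathrm{loc}}$. Hence applying $\mathrm{THH}$ to the defining cofiber sequence $\mathcal{U}_{\mathrm{loc}}\mathrm{Perf}(A/x) \to \mathcal{U}_{\mathrm{loc}}\mathrm{Perf}(A) \to \mathcal{U}_{\mathrm{loc}}(\mathrm{Perf}(A,\langle x\rangle))$ produces a cofiber sequence of cyclotomic spectra
\[
\mathrm{THH}(A/x) \xrightarrow{p_*} \mathrm{THH}(A) \longrightarrow \mathrm{THH}(\mathrm{Perf}(A,\langle x\rangle)),
\]
and moreover everything here is a module over $\mathrm{THH}(A) = \mathrm{THH}(\mathrm{Perf}(A))$ since $\mathrm{Perf}(A)$ is symmetric monoidal and the first two categories are module categories over it, with $\mathrm{THH}$ lax symmetric monoidal. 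By definition of $\mathrm{THH}(A\,|\,x)$ as the cofiber of $p_*\colon \mathrm{THH}(A/x) \to \mathrm{THH}(A)$, this exhibits a canonical equivalence $\mathrm{THH}(\mathrm{Perf}(A,\langle x\rangle)) \simeq \mathrm{THH}(A\,|\,x)$ of $\mathrm{THH}(A)$-modules in cyclotomic spectra. Composing with the equivalence $\varphi\colon \mathrm{THH}(A,\langle x\rangle) \xrightarrow{\simeq} \mathrm{THH}(A\,|\,x)$ of Theorem \ref{thm:mainthm} yields the desired equivalence.

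The only subtlety worth checking carefully — and the step I would flag as the main obstacle, modest as it is — is the compatibility of module structures and the cyclotomic refinement: one must verify that the cofiber sequence obtained by applying the cyclotomic-spectrum-valued $\mathrm{THH}$ to the motivic cofiber sequence genuinely computes $\mathrm{THH}(A\,|\,x)$ as defined in the body of the paper (i.e.\ that the map induced on $\mathrm{THH}$ by restriction of scalars agrees with $p_*$, and that the cofiber is formed in cyclotomic spectra compatibly with the $\mathrm{THH}(A)$-module structure). This is where one uses that $\mathrm{THH}\colon \mathrm{Cat}_\infty^{\mathrm{perf}} \to \mathrm{CycSp}$ is an exact, lax symmetric monoidal functor, so it preserves cofiber sequences and module structures, and that $\mathrm{Mot}_{\mathrm{loc}}$ receives a symmetric monoidal functor from $\mathrm{Cat}_\infty^{\mathrm{perf}}$ through which $\mathrm{THH}$ factors. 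Granting this, the proof is immediate, so the statement is marked with \qed in the excerpt; I would keep the written argument to the two or three sentences above.
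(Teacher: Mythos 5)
Your proposal is correct and coincides with the argument the paper intends: since the statement is marked with~\qed, its proof is the discussion immediately preceding it (the Dwyer--Kan localization property of $\mathcal{U}_{\mathrm{loc}}$ from \cite{RSW25}), combined with Theorem~\ref{thm:mainthm}. The only point you raise as a subtlety---that the module structures match up---is exactly the sense in which the statement is ``immediate'': the cofiber of the ${\rm THH}({\rm Perf}(A))$-linear map ${\rm THH}(p_*)$ (which is a module map by Lemma~\ref{lem:projectionformula}) can be formed in ${\rm Mod}_{{\rm THH}(A)}({\rm CycSp})$, and the forgetful functor to ${\rm CycSp}$ preserves it; so the abstract cyclotomic cofiber ${\rm THH}({\rm Perf}(A,\langle x\rangle))$ inherits the module structure of ${\rm THH}(A\,|\,x)$ tautologically, and Theorem~\ref{thm:mainthm} does the rest. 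One small technical remark worth being aware of: to conclude that applying ${\rm THH}$ to the category-level sequence yields a cofiber sequence in ${\rm CycSp}$, you need ${\rm THH}\colon {\rm Cat}_\infty^{\rm perf}\to{\rm CycSp}$ to factor through $\mathcal{U}_{\rm loc}$; this holds because the universal property of ${\rm Mot}_{\rm loc}$ classifies finitary localizing invariants valued in \emph{any} presentable stable $\infty$-category, not just ${\rm Sp}$, and cyclotomic ${\rm THH}$ is such an invariant. You gesture at this (``factors through $\mathcal{U}_{\rm loc}$'') but it deserves to be said once clearly, since naively $\mathcal{U}_{\rm loc}$ is introduced for spectrum-valued invariants.
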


By d\'evissage, $K({\rm Perf}(A , \langle x \rangle))$ will recover $K(A[x^{- 1}])$ as soon as $A$ and $A/x$ are regular Noetherian. If ${\cal O}_K$ is a discrete valuation ring with fraction field $K$ and $\pi$ is any choice\footnote{The construction of log ${\rm THH}$ is independent of this choice, see Remark \ref{rem:logthhinv}.} of uniformizer, we obtain a trace map $K(K) \to {\rm TC}({\cal O}_K, \langle \pi \rangle)$, analogously to Hesselholt--Madsen's setup \cite{HM03}. A multiplicative trace map of this form also appears in the recent work of Park \cite{Par25} using different methods. There are many examples where this ``logarithmic'' $K$-theory does not recover the $K$-theory of the localization: For instance, $K({\Bbb Z}[1/p])$ is not equivalent to $K({\rm Perf}({\Bbb Z}, \langle p^n \rangle))$ for $n > 1$, since $K({\Bbb F}_p)$ is not equivalent to $K({\Bbb Z}/p^n)$. 

We stress that Theorem \ref{thm:logmodules} is an existence result, and it remains unclear whether there is a choice of ${\rm Perf}(A, \langle x \rangle)$ that reflects e.g.\ the multiplicative structure of logarithmic ${\rm THH}$. This question is the subject of Conjecture \ref{conj:yougottatryright}. We also remark that Clausen already introduced a cone construction for any exact functor in \cite[Section 3.1]{Cla17}, which also gives rise to a choice of ${\rm Perf}(A, \langle x \rangle)$ in Theorem \ref{thm:logmodules}.  

\subsection{Structures on logarithmic ${\rm THH}$} The logarithmic term ${\rm THH}(A, \langle x \rangle)$ is an instance of Rognes' construction \cite[Definition 8.11]{Rog09}, which produces an ${\Bbb E}_{\infty}$-algebra ${\rm THH}(A, M)$ in cyclotomic spectra for any \emph{log ring} $(A, M)$ \cite[Construction 3.9]{BLPO23Prism}. By definition, a \emph{(pre-)log ring} consists of a commutative ring $A$, a commutative monoid $M$, and a map $\alpha \colon M \to (A, \cdot)$ of commutative monoids. Log rings are the basic building blocks for logarithmic schemes, the basic object of study in log geometry \cite{Kat89}. The example of ${\rm THH}(A, \langle x \rangle)$ is recovered by taking $M$ to be a free commutative monoid on a single generator (that is, the natural numbers), which is sent to $x$. 

There is now very strong evidence that Rognes' construction is the correct generalization of Hochschild homology to this context: In addition to the residue sequences \eqref{eq:rssres}, it naturally appears as the derived self-intersection of the Kato--Saito's \cite[Section 4]{KS04} ``log diagonal'' (\cite[Section 13]{Rog09} or \cite[Proposition 1.4]{BLPO23}), participates in a natural analog of the ${\rm HKR}$-theorem \cite[Theorem 1.1]{BLPO23}, satisfies base-change for log \'etale extensions \cite[Theorem 1.11]{Lun21}, and gives rise to a definition of log prismatic cohomology \cite{BLPO23Prism}, analogously to \cite{BMS19} in the non-logarithmic case. In turn, this notion of log prismatic cohomology participates in the expected comparison theorems \cite{BLMP24} and may be recovered from a site-theoretic notion pursued by Koshikawa \cite{Kos22} and Koshikawa--Yao \cite{KY23} after suitable Nygaard completion \cite{BLMP24Root}. 

Both Theorems \ref{thm:mainthm} and \ref{thm:logmodules} are concerned with \emph{additive} equivalences of cyclotomic spectra. Emboldened by Theorem \ref{thm:logmodules}, we ambitiously predict that a category ${\rm Perf}(A, M)$ can be chosen to reflect the additional structure on ${\rm THH}(A, M)$. This not only includes its ${\Bbb E}_{\infty}$-ring structure, but also the structure of a \emph{log differential graded ring} on its coefficient ring $\pi_*{\rm THH}(A, M)$. We recall this structure in Section \ref{subsec:logthhlogdiff}, which comes to life in this example via a map ${\rm dlog} \colon {\Bbb S}[BM^{\rm gp}] \to {\rm THH}(A, M)$. 

\begin{conjecture}\label{conj:yougottatryright} For any log ring $(A, M)$, there exists a symmetric monoidal $\infty$-category ${\rm Perf}(A, M)$ in ${\rm Cat}_{\infty}^{\rm perf}$ satisfying the following properties: 
\begin{enumerate}
\item[(a)] There is an equivalence ${\rm THH}(A, M) \xrightarrow{\simeq} {\rm THH}({\rm Perf}(A, M))$ of ${\Bbb E}_{\infty}$-algebras in cyclotomic spectra; 
\item[(b)] there is a map ${\rm dlog}_K \colon {\Bbb S}[BM] \to K({\rm Perf}(A, M))$ such that the composition \[{\Bbb S}[BM] \xrightarrow{{\rm dlog}_K} K({\rm Perf}(A, M)) \xrightarrow{{\rm tr}} {\rm THH}({\rm Perf}(A, M))\] induces a log differential graded ring structure on $\pi_*{\rm THH}({\rm Perf}(A, M))$; and
\item[(c)] the induced isomorphism $\pi_*{\rm THH}(A, M) \xrightarrow{\cong} \pi_*{\rm THH}({\rm Perf}(A, M))$ is one of log differential graded rings. 
\end{enumerate}
\end{conjecture}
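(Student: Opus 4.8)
We sketch a possible line of attack, and indicate where we expect the essential difficulty.

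The first move is to reduce to free log structures, and ultimately to $M = \langle x \rangle$: if $M$ is free on finitely many generators, then $(A, M)$ is, up to isomorphism of pre-log rings, a base change of the universal example $(\bZ[t_1, \dots, t_n], \langle t_1, \dots, t_n\rangle)$, and one expects ${\rm Perf}(A, M)$ to be obtained from the universal example by a corresponding (derived) base change inside ${\rm Cat}_{\infty}^{\rm perf}$, compatibly with the known base-change behaviour of $\THH(-, -)$; since the universal $n$-variable example is itself a tensor over ${\rm Perf}(\bZ)$ of $n$ copies of the one-variable example, this reduces the free case to $M = \langle x \rangle$. A general $M$ is a filtered colimit of finitely generated commutative monoids, so after logification one assembles ${\rm Perf}(A, M)$ from the finitely generated pieces as a colimit in ${\rm Cat}_{\infty}^{\rm perf}$; the point of care is that logification is not merely a colimit on the monoid side, so this colimit must be matched against Rognes' replete bar construction computing $\THH(A, M)$. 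The additive content of part (a) for $M = \langle x \rangle$ with $x$ a non-zero divisor is already Theorem \ref{thm:logmodules}; what is genuinely new is the \emph{symmetric monoidal} refinement, the $K$-theoretic ${\rm dlog}$, and the compatibility of differential graded structures. The bridge between the two sides is the grading: by Theorem \ref{thm:gradedagreement}, $\THH(A, \langle x \rangle)$ and $\THH(A | x)$ underlie canonically equivalent graded cyclotomic spectra, and one would build ${\rm Perf}(A, \langle x\rangle)$ so that its $\THH$ carries a matching grading by construction.

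For part (a) with $M = \langle x\rangle$, the additive model is Clausen's cone of the restriction functor $p_{*} \colon {\rm Perf}(A/x) \to {\rm Perf}(A)$ \cite[Section 3.1]{Cla17}: by design its $\THH$ is $\mathrm{cofib}(\THH(A/x) \xrightarrow{p_{*}} \THH(A)) = \THH(A | x)$, and likewise for every localizing invariant, so the statements for ${\rm TR}$ and $\TC$ will follow formally once (a) holds. Since $p_{*}$ is ${\rm Perf}(A)$-linear (projection formula for perfect modules), its cone is a ${\rm Perf}(A)$-module in ${\rm Cat}_{\infty}^{\rm perf}$, and $\THH$ of it is a $\THH(A)$-module, matching the $\THH(A)$-linear structure of Theorem \ref{thm:mainthm}. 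The task of part (a) is to upgrade this to a \emph{commutative algebra} structure: $p_{*}$ is moreover lax symmetric monoidal (again by the projection formula), so one would hope that Clausen's cone of a lax symmetric monoidal exact functor is naturally an $\bE_{\infty}$-algebra in ${\rm Cat}_{\infty}^{\rm perf}$, and that the resulting $\bE_{\infty}$-structure on $\THH(A | x)$ agrees with Rognes' via the equivalence $\varphi$ of Theorem \ref{thm:mainthm}. The latter comparison one would check weight by weight against the graded $\bE_{\infty}$-algebra of Theorem \ref{thm:gradedagreement}, using the rigidity afforded by that theorem to pin down the multiplication. (A naive geometric model such as perfect graded modules over the extended Rees algebra of $(x)$ does \emph{not} work --- it has, for instance, the wrong $K$-theory --- so a more refined construction, or precisely this cone, seems to be required.)

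For parts (b) and (c), the ${\rm dlog}$-class should come from the coordinate. From the cofiber sequence $K({\rm Perf}(A/x)) \xrightarrow{p_{*}} K(A) \to K({\rm Perf}(A, \langle x\rangle))$ and the resolution $0 \to A \xrightarrow{x} A \to A/x \to 0$, the class $[A/x] \in K_{0}({\rm Perf}(A/x))$ of the unit object maps to $0$ in $K_{0}(A)$, hence lifts to a class $c \in K_{1}({\rm Perf}(A, \langle x\rangle))$, canonical up to the image of $K_{1}(A)$; this is the desired ${\rm dlog}$-class, and it assembles into a map ${\rm dlog}_{K} \colon \bS[BM] \to K({\rm Perf}(A, M))$ refining the units $M^{\gp} \to \GL_{1}$ of the localization. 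For (b) it then remains to show that ${\rm tr} \circ {\rm dlog}_{K}$ agrees, under the equivalence of (a), with Rognes' ${\rm dlog}$ of Section \ref{subsec:logthhlogdiff}. Since the log differential graded ring structure on $\pi_{*}\THH(A, M)$ is already known on Rognes' side \cite{Rog09}, the only genuine content is this compatibility of the two ${\rm dlog}$-maps: by naturality of the trace with respect to the residue/boundary maps, $\mathrm{tr}(c)$ and Rognes' ${\rm dlog}(x)$ have the same image in $\pi_{0}\THH(A/x)$, so they agree up to $\pi_{1}\THH(A)$, matching the $K_{1}(A)$-ambiguity; making this coherent --- for the full map out of $\bS[BM]$ rather than on $\pi_{1}$ --- and then reducing to the universal example $(\bZ[x], \langle x\rangle)$ should finish (b), and (c) is then formal.

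The main obstacle is part (a): producing a symmetric monoidal ${\rm Perf}(A, M)$ whose $\THH$ recovers Rognes' construction \emph{as an $\bE_{\infty}$-algebra}, uniformly and functorially in $(A, M)$. Additive existence rests on Clausen's cone (or, equivalently, on the localization theorem of \cite{RSW25}), and it is exactly the multiplicative refinement --- together with the bookkeeping needed to keep it compatible with the reductions above --- that is not known and seems to require new input. The case of a discrete valuation ring ${\cal O}_{K}$, where ${\cal O}_{K}$ and ${\cal O}_{K}/\pi$ are regular and the special fibre is especially simple, is the natural first test and the source of the evidence referred to after the statement.
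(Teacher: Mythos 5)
The statement you were asked about is \emph{Conjecture}~\ref{conj:yougottatryright}; the paper deliberately leaves it open and there is no proof to compare against. The only evidence the paper supplies is Theorem~\ref{thm:hmcomp}, which verifies the log differential graded ring compatibility (parts (b)--(c), on homotopy groups) for a complete discrete valuation ring, and does so \emph{not} by constructing a symmetric monoidal ${\rm Perf}({\cal O}_K, \langle\pi\rangle)$ but by appealing to the Hesselholt--Madsen Waldhausen model ${\rm THH}({\cal P}({\cal O}_K), v{\cal P}({\cal O}_K))$ and its $\widetilde{\det}$-map (Section~\ref{subsec:hmrecall} and Proposition~\ref{prop:dlogcomp}). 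Even there, Remark~\ref{rem:awkward} emphasizes that the resulting equivalence is not shown to be one of ${\Bbb E}_\infty$-algebras in cyclotomic spectra, so even the special case of (a) is open. With that caveat your sketch is appropriately honest: you identify the same crux --- the $\mathbb{E}_\infty$-refinement of part (a) --- as genuinely unknown, and your proposed ${\rm dlog}_K$-class (the $K_1$-lift of $[A/x]$ through the cofiber sequence) is morally the $\widetilde{\det}$-construction the paper actually uses, so parts (b)--(c) of your outline are aligned with the evidence the paper offers.

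One concrete gap in your reduction chain, beyond the central unresolved multiplicativity: the passage to general $M$. You propose first treating free finitely generated $M$ via iterated base change of the one-variable example and then assembling arbitrary $M$ as a filtered colimit of finitely generated monoids after logification. But the entire cone/localization picture --- the Karoubi sequence \eqref{eq:xnilkaroubi}, the transfer $p_*$, the graded rigidity of Theorem~\ref{thm:gradedagreement} --- is tied to the presentation $M=\langle x\rangle$, where there is a distinguished ``special fiber'' $A/x$. For a general (even finitely generated, non-free) $M$ there is no canonical quotient nor a ${\rm Perf}(A)$-linear lax symmetric monoidal transfer whose cone could serve as ${\rm Perf}(A,M)$, so the putative filtered colimit has no supply of terms to glue. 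Any attack along your lines would need, as a preliminary step, a candidate for ${\rm Perf}(A,M)$ intrinsic to $(A,M)$ rather than bootstrapped from the free case; the paper does not suggest one, and this is a second genuine obstruction in addition to the one you flag.
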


\noindent Pictorially, Conjecture \ref{conj:yougottatryright} predicts the existence of a commutative diagram \[\begin{tikzcd} \vspace{10 mm} & & & K({\rm Perf}(A, M)) \ar{d}{\rm tr} \\ {\Bbb S}[BM] \ar[bend left = 3 mm]{rrru}{{\rm dlog}_K} \ar{r}{\simeq} & {\Bbb S}[BM^{\rm gp}] \ar{r}{{\rm dlog}} & {\rm THH}(A, M) \ar{r}{\simeq} & {\rm THH}({\rm Perf}(A, M))\end{tikzcd}\] of ${\Bbb E}_{\infty}$-rings. The formulation of the conjecture is motivated by how log differential graded ring structures come to life in the work of Hesselholt--Madsen \cite{HM03}:

\subsection{Compatibility with the Hesselholt--Madsen construction} For discrete valuation rings ${\cal O}_K$, Hesselholt--Madsen model the cofiber term ${\rm THH}({\cal O}_K | \pi)$ using topological Hochschild homology of Waldhausen categories. Recall that a Waldhausen category ${\cal C}$ comes equipped with a subcategory of weak equivalences $w{\cal C}$, which we omit from the notation if it equals the usual class of equivalences in ${\cal C}$. In this context, the role of the Karoubi sequence \eqref{eq:xnilkaroubi} is played by the sequence \[{\rm Perf}({\cal O}_K)^{\pi - {\rm nil}} \to {\rm Perf}({\cal O}_K) \to ({\rm Perf}({\cal O}_K), v{\rm Perf}({\cal O}_K)),\] where $v{\rm Perf}({\cal O}_K)$ consists of those maps that become quasi-isomorphisms after base-change to $K$. Waldhausen's fibration theorem \cite[Theorem 1.6.4]{Wal85} implies that this gives a cofiber sequence in $K$-theory, while his approximation theorem \cite[Theorem 1.6.7]{Wal85} identifies the $K$-theory of the cofiber term with $K(K)$. 

Hesselholt--Madsen \cite[Theorem 1.3.11]{HM03} give a version of the fibration theorem applicable to their variant of ${\rm THH}$ of Waldhausen categories. This enjoys a version of d\'evissage \cite[Theorem 1]{Dun98}\footnote{See Remark \ref{rem:devissageweird} for discussion on this potentially confusing point.} but no approximation theorem, resulting in a cofiber sequence ${\rm THH}(k) \to {\rm THH}({\cal O}_K) \to {\rm THH}({\cal O}_K | \pi)$ of ${\rm THH}({\cal O}_K)$-modules in cyclotomic spectra. This formulation gives the cofiber term additional structure: For one, it is an ${\Bbb E}_{\infty}$-ring. Moreover, as we recall in Section \ref{subsec:hmrecall}, there is a map \[{\Bbb S}[B\langle \pi \rangle] \xrightarrow{{\rm dlog}_K} K({\rm Perf}({\cal O}_K), v{\rm Perf}({\cal O}_K)) \xrightarrow{{\rm tr}} {\rm THH}({\cal O}_K | \pi)\] which induces a log differential graded ring structure on $\pi_*{\rm THH}({\cal O}_K | \pi)$.  

\begin{theorem}\label{thm:hmcomp} Let ${\cal O}_K$ be a complete discrete valuation ring of mixed characteristic $(0, p)$ with perfect residue field $k$. The isomorphism \[\pi_*{\rm THH}({\cal O}_K, \langle \pi \rangle) \xrightarrow{\cong} \pi_*{\rm THH}({\cal O}_K | \pi)\] of Theorem \ref{thm:mainthm} is one of log differential graded rings. 
\end{theorem}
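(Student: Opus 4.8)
The plan is to upgrade the additive isomorphism of Theorem \ref{thm:mainthm} to one of log differential graded rings by producing a single commutative diagram of $\mathbb{E}_\infty$-rings that interpolates between the two ${\rm dlog}$-maps. The key point is that a log differential graded ring structure on a graded-commutative ring $R_*$ (with a ring map from $\mathbb{Z}[M^{\rm gp}]$ on $\pi_0$, or rather from $\pi_*\mathbb{S}[BM^{\rm gp}]$) is \emph{determined} by the underlying graded ring together with the ${\rm dlog}$-map and the image of the Bockstein/first $k$-invariant data; so it suffices to match the ${\rm dlog}$-maps under $\varphi$. Concretely, on the Rognes side we have ${\rm dlog} \colon \mathbb{S}[B\langle\pi\rangle^{\rm gp}] \to {\rm THH}(\mathcal{O}_K,\langle\pi\rangle)$, and on the Hesselholt--Madsen side the composite $\mathbb{S}[B\langle\pi\rangle] \xrightarrow{{\rm dlog}_K} K({\rm Perf}(\mathcal{O}_K), v{\rm Perf}(\mathcal{O}_K)) \xrightarrow{{\rm tr}} {\rm THH}(\mathcal{O}_K\mid\pi)$; the goal is to show $\varphi$ carries the former to the latter.

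First I would reduce to a statement purely about $\pi_1$ (or $\pi_2$, depending on the sign/shift conventions), since the ${\rm dlog}$-map is controlled in low degrees: the log dga structure on $\pi_*{\rm THH}$ is generated over $\pi_*{\rm THH}(\mathcal{O}_K)$ by the class ${\rm dlog}\,\pi$ in degree $1$, and both sides have the same underlying graded ring by Theorem \ref{thm:mainthm}. So the content is the identification of the distinguished degree-$1$ class. On the Rognes side this class is, essentially by construction \cite[Definition 8.11]{Rog09}, the image of the canonical generator under $B\langle\pi\rangle^{\rm gp} = S^1 \to {\rm THH}(\mathcal{O}_K,\langle\pi\rangle)$. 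On the Hesselholt--Madsen side, the class ${\rm tr}\circ{\rm dlog}_K(\pi)$ is computed in \cite[Section 3]{HM03} via the trace from $K_1$ of the localization pair, and it agrees with Bökstedt's class $d\log\pi$ under their identification of ${\rm THH}(\mathcal{O}_K\mid\pi)$; the relevant computation is \cite[Theorem 3.3.8]{HM03} (or the surrounding discussion of the graded pieces ${\rm THH}(\mathcal{O}_K\mid\pi)$ as a ${\rm THH}(\mathcal{O}_K)$-module).

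The bridge between the two is the graded description from Theorem \ref{thm:gradedagreement}: both ${\rm THH}(\mathcal{O}_K,\langle\pi\rangle)$ and ${\rm THH}(\mathcal{O}_K\mid\pi)$ are the underlying object of an explicit graded cyclotomic spectrum, and $\varphi$ is the underlying map of an equivalence of graded cyclotomic $\mathbb{E}_\infty$-rings. Under this grading, ${\rm THH}(\mathcal{O}_K)$ sits in weight $0$, the residue term $k[1]$ appears in weight $1$, and the degree-$1$ generator ${\rm dlog}\,\pi$ is precisely the weight-$1$ unit — i.e., the image of $1 \in \pi_0({\rm THH}(\mathcal{O}_K))$ under the structure map that shifts weight by one. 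I would verify that the same description holds on the Hesselholt--Madsen side by tracking ${\rm dlog}_K(\pi)$ through Hesselholt--Madsen's filtration/grading of ${\rm THH}$ of the localization pair; the trace map is compatible with the grading because $K({\rm Perf}(\mathcal{O}_K), v{\rm Perf}(\mathcal{O}_K))$ itself acquires such a grading from the Rees-type construction, and $\mathbb{S}[B\langle\pi\rangle]$ maps in weight $1$. Then $\varphi$, being a map of graded $\mathbb{E}_\infty$-rings that is the identity in weight $0$, must send one weight-$1$ unit to the other, forcing ${\rm dlog}\,\pi \mapsto {\rm dlog}_K(\pi)$ and hence matching the full log dga structures (the differential and the module structure over $\pi_*{\rm THH}(\mathcal{O}_K)$ being determined thereby, using that $\mathcal{O}_K$ is a complete DVR with perfect residue field so that $\pi_*{\rm THH}(\mathcal{O}_K)$ is known by Bökstedt/Lindenstrauss--Madsen).

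The main obstacle I anticipate is making the comparison of ${\rm dlog}$-maps genuinely rigid rather than merely up to the ambiguity of a unit in $\pi_0 = \mathcal{O}_K$ (or $W(k)$): a priori $\varphi$ could rescale the weight-$1$ generator by a unit, and I must rule this out. This should follow from compatibility with the maps from $\mathbb{S}[B\langle\pi\rangle]$ — both ${\rm dlog}$-maps are normalized so that the generator of $\pi_1(\mathbb{S}[S^1])$ maps to the distinguished class, and $\varphi$ is compatible with these structure maps because the graded equivalence of Theorem \ref{thm:gradedagreement} is constructed as one of $\mathbb{S}[B\langle\pi\rangle]$-algebras (equivalently, the cyclotomic structure and the circle action pin down the generator). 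A secondary technical point is checking that Hesselholt--Madsen's ${\rm THH}$ of Waldhausen categories, with its cyclotomic structure, is identified with the localizing-invariant cofiber ${\rm THH}(\mathcal{O}_K\mid\pi)$ \emph{compatibly with gradings and with the trace map} — this is where one invokes d\'evissage \cite[Theorem 1]{Dun98} together with the identification already used in the proof of Theorem \ref{thm:mainthm}, now tracking the extra multiplicative and ${\rm dlog}$ data.
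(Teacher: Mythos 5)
Your overall strategy is sound and in the same spirit as the paper's: reduce to matching the distinguished degree-one ${\rm dlog}$-classes, and identify them via the trace map from $K$-theory. The differential is handled by the cyclotomic structure and the underlying graded ring by Theorem~\ref{thm:mainthm}, so the ${\rm dlog}$-compatibility is indeed the crux. However, several concrete steps in your proposal are either wrong or have gaps.

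First, your weight placement of the ${\rm dlog}$-class is incorrect. In the graded splitting \eqref{eq:susprepsplit}, the summand $\mathbb{S}^{\rm triv}[1]$ that carries ${\rm dlog}\,t$ sits in \emph{weight zero}, not weight one: it is the $\mathbb{S}^{\rm triv}[1]$ half of the weight-zero component $\mathbb{S}^{\rm triv}\oplus\mathbb{S}^{\rm triv}[1]$ coming from $B^{\rm rep}_{\{0\}}(\langle t\rangle)\simeq S^1/C_0$. Consequently, the residue term after base-change is ${\rm THH}(k)[1]$, again arising from weight zero, not ``$k[1]$ in weight $1$.'' Moreover, ${\rm THH}(\mathcal{O}_K)$ and the base-changed terms ${\rm THH}(\mathcal{O}_K,\langle\pi\rangle)$, ${\rm THH}(\mathcal{O}_K\mid\pi)$ do not themselves carry the weight grading — the grading lives at the universal level over ${\rm THH}(\mathbb{S}[t])$ and is forgotten before base-changing along ${\rm THH}(\mathbb{S}[t])\to{\rm THH}(\mathcal{O}_K)$. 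So the ``weight-$1$ unit'' picture you build the argument around is not available, and the proposed resolution of your unit-ambiguity concern (that $\varphi$ preserves the weight-$1$ generator) does not get off the ground in this form.

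Second, even after correcting the weight bookkeeping, the actual verification that the two ${\rm dlog}$-maps agree is missing and is the technical heart of the theorem. You would need something playing the role of Proposition~\ref{prop:dlogcomp}: a commutative diagram relating ${\rm dlog}\colon\mathbb{S}[B\langle t\rangle^{\rm gp}]\to{\rm THH}(\mathbb{S}[t],\langle t\rangle)$ to the composite $\mathbb{S}[B{\rm GL}_1]\to K\to{\rm THH}$. The paper's proof of that commutativity is not formal: it factors everything through ${\rm THH}(A[t]\mid t)\to{\rm THH}(A[t^{\pm1}])$ and then invokes Antieau--Barthel--Gepner's \emph{uniqueness of the trace map} \cite[Proposition 2.5]{ABG18} to identify the $K$-theoretic trace with the ``Thom-spectrum'' map, after which the weight decomposition (inclusion of non-negative weights, Proposition~\ref{prop:controllingf}) forces agreement. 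Your sketch replaces this with a vague appeal to a ``Rees-type grading on $K(\mathrm{Perf}(\mathcal{O}_K),v\mathrm{Perf}(\mathcal{O}_K))$,'' which is not a construction appearing in the paper and, even if it exists, would need to be shown compatible with the Hesselholt--Madsen $\widetilde{\det}$ map. You would also need to pass through Waldhausen's approximation theorem to identify $K(\mathcal{P}(\mathcal{O}_K),v\mathcal{P}(\mathcal{O}_K))\simeq K(K)$ compatibly with the $\widetilde{\det}$ maps, as the paper does.

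Third, the reduction ``both sides have the same underlying graded ring by Theorem~\ref{thm:mainthm}'' is not immediate: Theorem~\ref{thm:mainthm} gives an equivalence of ${\rm THH}(\mathcal{O}_K)$-modules in cyclotomic spectra, not of ring spectra. The paper's Proposition~\ref{prop:dgrings} does real work (restriction along $\mathbb{S}[t]\to\mathbb{S}[B^{\rm cyc}\langle t\rangle]$, adjunction, and again Proposition~\ref{prop:dlogcomp}) to show that the comparison map is one of homotopy commutative rings and hence a dg-ring map on homotopy groups; Remark~\ref{rem:awkward} emphasizes that it is \emph{not} proved to be an $\mathbb{E}_\infty$-equivalence of cyclotomic spectra. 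Your proposal treats the multiplicative comparison as essentially free.
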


Theorems \ref{thm:mainthm} and \ref{thm:hmcomp} show that ${\rm THH}({\cal O}_K, \langle \pi \rangle)$ is a suitable replacement of Hesselholt--Madsen's ${\rm THH}({\cal O}_K | \pi)$. More generally, we think of ${\rm THH}(A, \langle x \rangle)$ as a multiplicative refinement of ${\rm THH}(A| x)$. Somewhat surprisingly, we do \emph{not} prove that there is an equivalence of ${\Bbb E}_{\infty}$-algebras in cyclotomic spectra inducing the isomorphism in Theorem \ref{thm:hmcomp}; see Remark \ref{rem:awkward} for a discussion.

\subsection{The case of log ring spectra} One can ask analogous questions in the context of the algebraic $K$-theory of structured ring spectra. For example, Blumberg--Mandell \cite{BM08} established a variant of d\'evissage (a special case of the theorem of heart \cite{Bar15}, which remains valid for non-connective $K$-theory in this context by \cite[Theorem 3.18]{AGH19}), that allows for the construction of a localization sequence \[K({\Bbb Z}) \to K({\rm ku}) \to K({\rm KU}),\] with the first map being induced by restriction of scalars along ${\rm ku} \to {\Bbb Z}$. 

Analogously to the situation for ordinary rings, d\'evissage fails for the ${\rm THH}$ of ring spectra, and it is not the case that the fiber term of the localization sequence \[{\rm THH}({\rm Perf}({\rm ku})^{\beta - {\rm nil}}) \to {\rm THH}({\rm Perf}({\rm ku})) \to {\rm THH}({\rm Perf}({\rm KU}))\] identifies with ${\rm THH}({\Bbb Z})$. In logarithmic ${\rm THH}$, however, Rognes--Sagave--Schlichtkrull \cite[Theorem 1.4]{RSS25} establish a cofiber sequence \[{\rm THH}({\rm ku}) \xrightarrow{} {\rm THH}({\rm ku}, \langle \beta \rangle) \xrightarrow{\partial^{\rm rep}} {\rm THH}({\Bbb Z})[1]\] of ${\rm THH}({\rm ku})$-modules in cyclotomic spectra. The following generalizes \cite[Conjecture 7.5]{Rog14}, as we explain in Remark \ref{rem:rognesconj}. 

\begin{theorem}\label{thm:mainthmspec}  Let $A$ be an even ${\Bbb E}_2$-ring and let $x \in \pi_{2d}(A)$. Then there is an equivalence \[\varphi \colon {\rm THH}(A, \langle x \rangle) \xrightarrow{\simeq} {\rm THH}(A | x)\] of ${\rm THH}(A)$-modules in cyclotomic spectra, and $\partial^{\rm rep} \simeq \partial \circ \varphi$. 
\end{theorem}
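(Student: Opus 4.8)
The plan is to follow the proof of Theorem~\ref{thm:mainthm}, whose central input is the description of both sides as graded cyclotomic spectra, replacing each ingredient by its even-${\Bbb E}_2$ counterpart. The first move is to exhibit both ${\rm THH}(A,\langle x\rangle)$ and ${\rm THH}(A|x)$ as cofibers of maps ${\rm THH}(A/\!/x)\to{\rm THH}(A)$ of ${\rm THH}(A)$-modules in cyclotomic spectra, and to reduce the theorem to identifying these two maps. For ${\rm THH}(A|x)$ this holds by definition once we know that $A/\!/x=A\otimes_{{\Bbb S}[t]}{\Bbb S}$ — with ${\Bbb S}[t]$ the free ${\Bbb E}_2$-ring on a generator in degree $2d$ sent to $x$ — is an ${\Bbb E}_1$-algebra over $A$, which is exactly what the ${\Bbb E}_2$-structure on $A$ provides; restriction of scalars then gives $p_*\colon{\rm Perf}(A/\!/x)\to{\rm Perf}(A)$ and hence a map $p_*\colon{\rm THH}(A/\!/x)\to{\rm THH}(A)$ with cofiber ${\rm THH}(A|x)$. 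On the other hand, the residue sequence of \cite{RSS25} can be rotated to a cofiber sequence ${\rm THH}(A/\!/x)\xrightarrow{\delta}{\rm THH}(A)\xrightarrow{\iota}{\rm THH}(A,\langle x\rangle)$, for a canonical connecting map $\delta$, which exhibits ${\rm THH}(A,\langle x\rangle)$ as the cofiber of $\delta$. Hence it suffices to produce an equivalence $\delta\simeq p_*$ of ${\rm THH}(A)$-module maps in cyclotomic spectra; the equivalence $\varphi$ and the identity $\partial^{\rm rep}\simeq\partial\circ\varphi$ then follow formally by passing to cofibers.

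To compare $\delta$ and $p_*$ I would pass to the graded refinements of Theorem~\ref{thm:gradedagreement}: both ${\rm THH}(A,\langle x\rangle)$ and ${\rm THH}(A|x)$ underlie graded cyclotomic spectra — on the logarithmic side via the weight grading intrinsic to the graded (${\cal J}$-space) model of the log structure generated by $x$, and on the cofiber side via the weight grading that the evenness of $A$ places on the $x$-localization and on the construction computing the cofiber — and the claim is that these graded cyclotomic ${\rm THH}(A)$-modules, together with their boundary maps, agree weight by weight. In nonpositive weights both reduce to ${\rm THH}(A)$, in each positive weight both are the same shift of ${\rm THH}(A/\!/x)$, and the cyclotomic Frobenius implements the passage from weight $n$ to weight $pn$ in the same way; matching these pieces and this Frobenius pattern identifies $\delta$ with $p_*$.

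The weight-by-weight identification I would carry out by base change along the map ${\Bbb S}[t]\to A$ classifying $x$. On the logarithmic side, ${\rm THH}(A,\langle x\rangle)$ is obtained from ${\rm THH}({\Bbb S}[t],\langle t\rangle)$ by the relative tensor product along ${\rm THH}({\Bbb S}[t])\to{\rm THH}(A)$, essentially because Rognes' construction sees only the monoid $\langle x\rangle\cong{\Bbb N}$ and the (replete) bar constructions attached to it. On the cofiber side, ${\rm THH}(A|x)$ is obtained by the same relative tensor product from ${\rm THH}({\Bbb S}[t]|t)$, using that ${\rm THH}$ is symmetric monoidal on ${\rm Cat}_{\infty}^{\rm perf}$ and that ${\rm Perf}(A/\!/x)\simeq{\rm Perf}(A)\otimes_{{\rm Perf}({\Bbb S}[t])}{\rm Perf}({\Bbb S})$; here the realization of the cofiber through the universal property of ${\cal U}_{\rm loc}$ (\cite{RSW25}) is what lets one proceed without multiplicative base change for ${\rm THH}$, which the mere ${\Bbb E}_2$-structure would not supply. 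One is thereby reduced to the universal case ${\Bbb S}[t]$, where the claim becomes that the connecting map of the residue triangle for ${\Bbb S}[t]\to{\Bbb S}$ is restriction of scalars — morally, that the transfer in the localization sequence is the ${\rm dlog}$-boundary — which can be verified by a direct comparison of replete and cyclic bar constructions together with their circle action and cyclotomic Frobenius.

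I expect the principal obstacle to be two-sided. First, tracking the \emph{cyclotomic} (Frobenius) data through two quite different constructions: $\delta$ is manufactured from a repletion of the log structure while $\partial$ is the boundary of a localizing-invariant cofiber, so reconciling their Frobenii on each graded piece — not merely their underlying spectra — is the crux, and it is there that evenness, which forces the weight gradings to split in the expected shape, earns its keep. Second, the reduction to the universal case under only an ${\Bbb E}_2$-structure: one must check that both Rognes' construction and the residue sequence of \cite{RSS25} are compatible with base change along ${\Bbb S}[t]\to A$ in this generality, and that evenness of $A$ suffices to control ${\rm THH}(A/\!/x)$ — bearing in mind that $A/\!/x$ need not itself be even, since $x$ need not be a non-zero-divisor on $\pi_*A$.
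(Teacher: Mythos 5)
Your high-level outline — reduce to the universal ${\Bbb E}_2$-ring ${\Bbb S}[t_{2d}]$ by base change, and exploit the weight grading on cyclotomic spectra — is the same scaffold the paper uses, so I will focus on where the execution diverges and where it breaks.

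The central move in your plan is to recast the problem as the identification of two maps $\delta, p_* \colon {\rm THH}(A/\!/x) \to {\rm THH}(A)$ of ${\rm THH}(A)$-modules in cyclotomic spectra, reduce to the universal case $A = {\Bbb S}[t_{2d}]$, and then ``verify by direct comparison of replete and cyclic bar constructions together with their circle action and cyclotomic Frobenius.'' This reduction is logically equivalent to the theorem (passing to cofibers of a known map of fiber inclusions), but it is not how the paper proceeds, and the difference is not cosmetic: the paper deliberately \emph{avoids} comparing the fiber maps. The remark after Lemma~\ref{lem:projectionformula} highlights that, while ${\rm THH}(p_*)$ is null on underlying cyclotomic spectra (Lemma~\ref{lem:nulltransspec}), it is \emph{not} null as a ${\rm THH}({\Bbb S}[t_{2d}])$-module map — so the module mapping spectrum ${\rm map}_{{\rm Mod}_{{\rm THH}({\Bbb S}[t_{2d}])}}({\rm THH}({\Bbb S}), {\rm THH}({\Bbb S}[t_{2d}]))$ contains genuine information that must be controlled. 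The tool for such control is Lemma~\ref{lem:gradedcontrol}, but its hypotheses require the \emph{target} to be concentrated in non-positive weights, and ${\rm THH}({\Bbb S}[t_{2d}])$ is concentrated in non-negative weights. So the graded structure does not tame the mapping spectrum you would need to compute, and your ``verify in the universal case'' step has no mechanism behind it. The paper's solution is precisely to route around this: factor $p_*$ through ${\rm THH}({\rm Perf}({\Bbb S}[t_{2d}])^{t_{2d}-{\rm nil}})$, which \emph{is} concentrated in non-positive weights (Proposition~\ref{prop:gradedlift}), then compare cofibers by mapping into ${\rm THH}({\Bbb S}[t_{2d}^{\pm 1}])$ (Proposition~\ref{prop:eqgradedcyc}), and finally handle the residue maps via the explicit computation of the module mapping spectrum ${\rm map}(-, {\Bbb S}^{\rm triv}[1])$ in Theorem~\ref{thm:gradedagreement}, whose \emph{target} is weight-concentrated in zero. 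You would need the analogous factorization and target-side weight concentration, at which point you have reproduced the paper's argument rather than replaced it.

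Two further concrete gaps. First, you never address what the paper flags as a ``surprisingly subtle point'': once the transfer has been lifted to a ${\rm THH}({\Bbb S}[t_{2d}])$-module map in \emph{graded} cyclotomic spectra, one must check that forgetting the grading recovers the original module map in plain cyclotomic spectra (Proposition~\ref{prop:mainund}, resting on Lemma~\ref{lem:mainund}). This is not automatic and would afflict your proposed graded lift of $\delta$ as well. Second, your weight-by-weight description is not correct: both ${\rm THH}(A, \langle x\rangle)$ and ${\rm THH}(A|x)$ are concentrated in non-negative weights, with weight zero being ${\rm THH}(A) \oplus {\rm THH}(A)[1]$ rather than ${\rm THH}(A)$, and the positive-weight pieces are ${\rm THH}(A)[S^1/C_j]$-type summands, not shifts of ${\rm THH}(A/\!/x)$ — ${\rm THH}(A/\!/x)$ sits in weight zero only. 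Finally, a smaller confusion: the \cite{RSW25} realization result is invoked for Theorem~\ref{thm:logmodules} (existence of a category realizing the cofiber motive), not for the base-change step; symmetric monoidality of ${\rm THH}$ on ${\rm Cat}_{\infty}^{\rm perf}$ together with the ${\rm Perf}({\Bbb S}[t_{2d}])$-module structure on ${\rm Perf}(A)$ induced by the ${\Bbb E}_2$-map ${\Bbb S}[t_{2d}] \to A$ is what drives Lemma~\ref{lem:transbasechangespec}.
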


The term ${\rm THH}(A, \langle x \rangle)$ is the variant of log ${\rm THH}$ that appears in the recent work of Rognes--Sagave--Schlichtkrull \cite{RSS25}, of which we give a brief overview in Section \ref{subsec:rssrev}. In this context, it is also equivalent to the definition considered by Ausoni--Bay{\i}nd{\i}r--Moulinos \cite{ABM23} as underlying ${\rm THH}(A)$-modules. Under the hypotheses of Theorem \ref{thm:mainthmspec}, this construction still participates in residue sequences \eqref{eq:rssres}, and recovers their previous construction \cites{RSS15, RSS18} when both are defined (e.g.\ in the example $(A, \langle x \rangle) = ({\rm ku}, \langle \beta \rangle)$). One example which is only defined in the model of \cites{RSS15, RSS18} is that of $({\rm ko}, \langle w \rangle)$ with $w \in \pi_8({\rm ko})$ the Bott class; we refer to Remark \ref{rem:koinclude} for discussion on this point. 

The proof of Theorem \ref{thm:logmodules} still applies in this context. In particular, there exists a stable $\infty$-category ${\rm Perf}(A, \langle x \rangle)$ with ${\rm THH}(A, \langle x \rangle) \simeq {\rm THH}({\rm Perf}(A , \langle x \rangle))$.  This provides e.g.\ a right-hand vertical trace map in the diagram \[\begin{tikzcd}[row sep = small]K({\Bbb Z}) \ar{r} \ar{dd}{{\rm trc}} & K({\rm ku}) \ar{r} \ar{dr} \ar{dd}{{\rm trc}} & K({\rm KU}) \\ \vspace{10 mm} & & K({\rm Perf}({\rm ku}, \langle \beta \rangle)) \ar[swap]{u}{\simeq} \ar{d}{\rm trc} \\ {\rm TC}({\Bbb Z}) \ar{r} & {\rm TC}({\rm ku}) \ar{r} & {\rm TC}({\rm ku}, \langle \beta \rangle)\end{tikzcd}\] of horizontal cofiber sequences. Similarly to our discussion in the context of discrete rings, we quickly run into examples where $K({\rm Perf}(A, \langle x \rangle)) \not\simeq K(A[x^{-1}])$. For example, the map $K({\rm Perf}({\rm BP}\langle n \rangle, \langle v_n \rangle))  \to K(E(n))$ in the diagram \[\begin{tikzcd}[row sep = small]\vspace{10 mm} & & K(E(n)) \\ K({\rm BP}\langle n - 1 \rangle) \ar{r} \ar{d}{{\rm trc}} & K({\rm BP}\langle n \rangle) \ar{r} \ar{d}{{\rm trc}} & K({\rm Perf}({\rm BP}\langle n \rangle, \langle v_n \rangle)) \ar[swap]{u}{\not\simeq} \ar{d}{{\rm trc}}  \\ {\rm TC}({\rm BP}\langle n - 1 \rangle) \ar{r} & {\rm TC}({\rm BP}\langle n \rangle) \ar{r} & {\rm TC}({\rm BP}\langle n \rangle, \langle v_n \rangle)\end{tikzcd}\] of horizontal cofiber sequences is not an equivalence for $n \ge 2$: This is the main result of Antieau--Barthel--Gepner \cite{ABG18}. 

\subsection{Acknowledgments} The author was first introduced to questions related to Theorem \ref{thm:mainthm} by Christian Schlichtkrull and would like to thank him, as well as Federico Binda, Jack Davies, Jens Hornbostel, Alberto Merici, Doosung Park, Maxime Ramzi, John Rognes, and Steffen Sagave for helpful discussions, and Rognes and Sagave for comments and corrections on a draft. This research was conducted in the framework of the DFG-funded research training group GRK 2240: \emph{Algebro-Geometric Methods in Algebra, Arithmetic and Topology} and the programme \emph{Equivariant homotopy theory in context} at the Isaac Newton Institute for Mathematical Sciences, Cambridge. 

\subsection{Outline} In Section \ref{sec:logthh}, we introduce necessary background material on graded cyclotomic spectra and their interaction with the variant of log ${\rm THH}$ recently introduced in \cite{RSS25}. Section \ref{sec:logthhloc} is dedicated to the proofs of Theorems \ref{thm:mainthm} and \ref{thm:mainthmspec}, while we prove Theorem \ref{thm:hmcomp} in Section \ref{sec:hmcomp}.

\section{Logarithmic Topological Hochschild Homology}\label{sec:logthh} We now introduce the necessary background material on the variant of logarithmic ${\rm THH}$ we shall consider here. After recalling the construction of the replete bar construction following \cite{Rog09}, we introduce graded cyclotomic spectra following \cite{AMMN22}. Finally, we discuss the construction of log ${\rm THH}$ appearing in \cite{RSS25}. 

\subsection{The replete bar construction}\label{subsec:repbar} Let $M$ be a discrete commutative monoid. Its \emph{cyclic bar construction} $B^{\rm cyc}(M)$ is the simplicial commutative monoid with $q$-simplices $M^{\oplus (1 + q)}$ with the usual face and degeneracy maps. Since $M$ is commutative, the iterated multiplication maps induce an augmentation $B^{\rm cyc}(M) \to M$. The \emph{replete bar construction} $B^{\rm rep}(M)$ is the pullback \[\begin{tikzcd}B^{\rm rep}(M) \ar{r} \ar{d} & B^{\rm cyc}(M^{\rm gp}) \ar{d} \\ M \ar{r} & M^{\rm gp} \end{tikzcd}\] of simplicial commutative monoids, and it comes with a map $B^{\rm cyc}(M) \to B^{\rm rep}(M)$. There is a levelwise isomorphism \begin{equation}\label{eq:repiso}M \oplus B_qM^{\rm gp} \xrightarrow{} B^{\rm rep}_q(M), \quad (m, g_1, \dots, g_q) \mapsto (m, \gamma(m)(g_1 \cdots g_q)^{-1}, g_1, \dots, g_q)\end{equation} which gives rise to an isomorphism $M \oplus BM^{\rm gp} \cong B^{\rm rep}(M)$ of simplicial commutative monoids, cf.\ \cite[Lemma 3.17]{Rog09}. Here we have written $\gamma \colon M \to M^{\rm gp}$ for the canonical map from $M$ to its group completion $M^{\rm gp}$. 

\subsection{The circle action on the replete bar construction} For any commutative monoid $M$, the replete bar construction participates in a commutative diagram \begin{equation}\label{eq:repsquare}\begin{tikzcd}B^{\rm cyc}(M) \ar{d} \ar{r} & B^{\rm rep}(M) \ar{r} \ar{d} & B^{\rm cyc}(M^{\rm gp}) \ar{d} \\ M \ar{r}{=} & M \ar{r} & M^{\rm gp} \end{tikzcd}\end{equation} of simplicial commutative monoids. For $M = \langle t \rangle \cong {\Bbb N}$ a free commutative monoid on a single generator $t$, we recall the cyclic structure on the terms and maps involved in the upper horizontal composition $B^{\rm cyc}(\langle t \rangle) \to B^{\rm rep}(\langle t \rangle) \to B^{\rm cyc}(\langle t \rangle^{\rm gp})$. We refer to \cite[Propositions 3.20 and 3.21]{Rog09} and the references therein (in particular Hesselholt \cite{Hes96}) for proofs. 

Let us write $B^{\rm cyc}_{\{j\}}(\langle t \rangle)$ for the fiber over $t^j$ of the augmentation $B^{\rm cyc}(\langle t \rangle) \to \langle t \rangle$, and analogously for $B^{\rm rep}(\langle t \rangle)$ and $B^{\rm cyc}(\langle t \rangle^{\rm gp})$. The upper horizontal composite of \eqref{eq:repsquare} decomposes as \begin{equation}\label{eq:decomp1}\coprod_{j \ge 0} B^{\rm cyc}_{\{j \}}(\langle t \rangle) \to \coprod_{j \ge 0} B^{\rm rep}_{\{j\}}(\langle t \rangle) \to \coprod_{j \in {\Bbb Z}} B^{\rm cyc}_{\{j\}}(\langle t \rangle^{\rm gp}).\end{equation} By design, the latter map is an equivalence on each component for $j \ge 0$, while the former is an equivalence for each $j \ge 1$. One can identify \eqref{eq:decomp1} with the diagram \begin{equation}\label{eq:decomp2} \ast \sqcup \coprod_{j \ge 1} S^1/C_j \to \coprod_{j \ge 0} S^1 / C_j \to \coprod_{j \in {\Bbb Z}} S^1 / C_{|j|}\end{equation} of spaces with $S^1$-action, with trivial action on the weight zero component $S^1/C_0$. 

\subsection{Spectra with Frobenius lifts}\label{subsec:froblift} The decompositions \eqref{eq:decomp1} and \eqref{eq:decomp2}, as well as the equivalences relating them, are of \emph{spaces with Frobenius lifts} in the sense considered in \ \cite[Section 2.2]{McC24}, see \cite[Example 2.2.9]{McC24}. Consequently, the equivalence $B^{\rm rep}_{\{0\}}(\langle t \rangle) \simeq S^1 / C_0$ in weight zero is one of spaces with Frobenius lifts, which gives rise to an equivalence ${\Bbb S}[B^{\rm rep}_{\{0\}}(\langle t \rangle)] \simeq {\Bbb S}[S^1/C_0] \simeq {\Bbb S}^{\rm triv} \oplus {\Bbb S}^{\rm triv}[1]$ of spectra with Frobenius lifts \cite[p.\ 4333]{McC24}. The latter category admits a left adjoint to cyclotomic spectra by \cite[Theorem 2.3.8]{McC24}, informally obtained by only remembering the ``underlying cyclotomic spectrum'' with cyclotomic structure maps $\varphi_p \colon X \xrightarrow{\psi_p} X^{hC_p} \xrightarrow{{\rm can}} X^{tC_p}$. In particular, ${\Bbb S}[B^{\rm rep}_{\{0\}}(\langle t \rangle)] \simeq {\Bbb S}^{\rm triv} \oplus {\Bbb S}^{\rm triv}[1]$ as cyclotomic spectra.

\subsection{Mapping spaces in module categories} Let ${\cal C}$ be a presentably symmetric monoidal $\infty$-category and let $A$ be an algebra object in ${\cal C}$. We record the following well-known description of mapping spaces in ${\rm Mod}_{{A}}({\cal C})$: 

\begin{lemma}\label{lem:mappingspectramodules} There is an equivalence \[{\rm Map}_{{\rm Mod}_A({\cal C})}(M, N) \xrightarrow{\simeq} {\rm lim} ({\rm Map}_{{\cal C}}(M \otimes A^{\otimes n}, N)),\] functorial in $M$ and $N$. 
\end{lemma}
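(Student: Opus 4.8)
The statement is the standard cobar/bar description of mapping spaces in a module category, so the plan is to deduce it from the free–forgetful adjunction together with the fact that every module is the geometric realization of its bar resolution. Concretely, write $U \colon \mathrm{Mod}_A(\mathcal{C}) \to \mathcal{C}$ for the forgetful functor and $F = A \otimes (-)$ for its left adjoint. The monad $T = UF$ sends $X \mapsto A \otimes X$, and for any $A$-module $M$ the two-sided bar construction gives a simplicial object $\mathrm{Bar}_\bullet(F, T, UM)$ with $q$-simplices $F(T^q(UM)) = A \otimes A^{\otimes q} \otimes UM$, whose geometric realization recovers $M$ in $\mathrm{Mod}_A(\mathcal{C})$; this is the monadic bar resolution, valid because $U$ preserves geometric realizations (it is a right adjoint but also, being forgetful from modules over an algebra in a presentable symmetric monoidal category, preserves all colimits) and $\mathrm{Mod}_A(\mathcal{C})$ is generated under colimits by free modules. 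Mapping out of a colimit turns the realization into a totalization, giving
\[
\mathrm{Map}_{\mathrm{Mod}_A(\mathcal{C})}(M, N) \simeq \mathrm{Tot}\big(\mathrm{Map}_{\mathrm{Mod}_A(\mathcal{C})}(F(T^\bullet UM), N)\big),
\]
and by the adjunction $F \dashv U$ each term is $\mathrm{Map}_{\mathcal{C}}(T^\bullet(UM), UN) = \mathrm{Map}_{\mathcal{C}}(M \otimes A^{\otimes \bullet}, N)$, where I suppress $U$ from the notation as in the statement. This is exactly the claimed limit over the cosimplicial diagram.

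**Execution.** First I would set up the free–forgetful adjunction $F \dashv U$ between $\mathcal{C}$ and $\mathrm{Mod}_A(\mathcal{C})$ and recall (e.g.\ from \cite[Section 4.7]{Lur17}) that $\mathrm{Mod}_A(\mathcal{C})$ is monadic over $\mathcal{C}$ with monad $T = A \otimes (-)$, and that the Barr–Beck conditions hold. Second, I would invoke the general fact that for a monadic adjunction the canonical augmented simplicial object $\mathrm{Bar}_\bullet(F, T, UM) \to M$ is a colimit diagram in $\mathrm{Mod}_A(\mathcal{C})$: this is the statement that every algebra over a monad is the realization of its canonical bar resolution, which holds because $U$ detects and preserves the relevant geometric realization (it is split after applying $U$). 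Third, I would apply the contravariant functor $\mathrm{Map}_{\mathrm{Mod}_A(\mathcal{C})}(-, N)$, which converts the colimit to a limit (totalization of a cosimplicial space). Fourth, I would use the adjunction termwise to rewrite $\mathrm{Map}_{\mathrm{Mod}_A(\mathcal{C})}(A \otimes A^{\otimes n} \otimes M, N) \simeq \mathrm{Map}_{\mathcal{C}}(A^{\otimes n} \otimes M, N)$, and observe that the cosimplicial structure maps are the evident ones coming from the module and algebra structure. Functoriality in $M$ and $N$ is automatic since every step is natural. If one prefers to avoid monadicity language entirely, the same argument can be phrased by resolving $M$ explicitly as the colimit over the bar construction and checking that mapping into $N$ yields the cosimplicial object $[n] \mapsto \mathrm{Map}_{\mathcal{C}}(M \otimes A^{\otimes n}, N)$ with the standard cosimplicial structure.

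**Main obstacle.** The only genuine content is verifying that the bar resolution $\mathrm{Bar}_\bullet(F, T, UM) \to M$ is a colimit diagram in $\mathrm{Mod}_A(\mathcal{C})$, i.e.\ that $U$ preserves this geometric realization and that $U$ is conservative so that the $U$-split augmentation is detected. In the present setting this is standard: $U$ is conservative and preserves all colimits (it admits both adjoints, the right adjoint being $\mathrm{Map}$-cotensor), so the augmented bar object, being split after applying $U$, is a colimit after applying $U$ and hence a colimit in $\mathrm{Mod}_A(\mathcal{C})$. So this is more a matter of citing the right form of the Barr–Beck–Lurie theorem than a real difficulty. A minor secondary point is to make sure the cosimplicial structure maps on $[n] \mapsto \mathrm{Map}_{\mathcal{C}}(M \otimes A^{\otimes n}, N)$ obtained from the adjunction agree with the ones implicit in the statement; this is a direct unwinding of the bar differentials (multiplication of adjacent $A$'s, the $A$-action on $M$, and unit insertions) and I would leave it as a routine verification.
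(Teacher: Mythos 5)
Your proof is correct and follows essentially the same route as the paper: resolve $M$ by a two-sided bar construction in $\mathrm{Mod}_A(\mathcal{C})$, turn the colimit into a totalization by mapping into $N$, and then apply the free--forgetful adjunction termwise. The paper is terser (it just cites the bar resolution $B_\bullet(M,A,A)$ and adjunction without invoking Barr--Beck explicitly), while you supply the monadicity justification in more detail, but the mathematical content is the same.
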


\begin{proof} Let us write the $A$-module $M$ as the colimit of the two-sided bar construction $B_{\bullet}(M, A, A)$ in ${\cal C}$. This gives rise to the first equivalence in the composition \[\begin{tikzcd}[row sep = small]{\rm Map}_{{\rm Mod}_A({\cal C})}(M, N) \ar{r}{\simeq} & {\rm lim}({\rm Map}_{{\rm Mod}_A({\cal C})}(M \otimes A^{\otimes n} \otimes A, N)) \ar{d}{\simeq}  \\ \vspace{10 mm} & {\rm lim}({\rm Map}_{{\cal C}}(M \otimes A^{\otimes n}, N)),\end{tikzcd}\] while the second follows from adjunction.  
\end{proof}

\subsection{Graded cyclotomic spectra} Following \cite[Appendix A]{AMMN22}, we now discuss some background material on graded cyclotomic spectra. We shall write ${\rm grSp}$ for the category of ${\Bbb Z}$-graded spectra; that is, the functor category ${\rm Fun}({\Bbb Z}, {\rm Sp})$ where ${\Bbb Z}$ is considered a discrete category. We consider the endofunctor \[F_p \colon {\rm grSp}^{BS^1} \to {\rm grSp}^{BS^1}, \quad X \mapsto X^{tC_p},\] where $(X^{tC_p})_i = X_{pi}^{tC_p}$. The category of \emph{graded cyclotomic spectra} is the pullback \[\begin{tikzcd}{\rm grCycSp} \ar{r} \ar{d} & \prod_p {\rm Fun}(\Delta^1, {\rm grSp}^{BS^1}) \ar{d}{({\rm ev}_0, {\rm ev}_1)} \\ {\rm grSp}^{BS^1} \ar{r}{({\rm id}, F_p)} & \prod_p {\rm grSp}^{BS^1}  \times {\rm grSp}^{BS^1}.  \end{tikzcd} \] If $R$ is a ${\Bbb Z}$-graded ${\Bbb E}_1$-ring, ${\rm THH}(R)$ is naturally an object of ${\rm grCycSp}$ by \cite[Example A.10]{AMMN22}, in the sense that its underlying cyclotomic spectrum is the ${\rm THH}$ of the underlying ${\Bbb E}_1$-ring of $R$ \cite[Example A.16]{AMMN22}. We consider ${\rm grSp}$ as a symmetric monoidal category via Day convolution and use \cite[Construction IV.2.1]{NS18} to obtain a symmetric monoidal structure on ${\rm grCycSp}$. 

To avoid confusion with the homotopical grading, we shall refer to the grading of objects in ${\rm grCycSp}$ as the \emph{weight}. The additional structure provided by the weight grading gives us important additional control of mapping spectra in specific cases. We isolate here the particular statement relevant to us:

\begin{lemma}\label{lem:gradedcontrol} Let $R \in {\rm Alg}({\rm grCycSp})$ be an ${\Bbb E}_1$-algebra in graded cyclotomic spectra concentrated in non-negative weights, and let $M$ and $N$ be $R$-modules in graded cyclotomic spectra. Suppose that 
\begin{enumerate}
\item the weight zero component $R_0$ of $R$ is ${\Bbb S}$;
\item $M$ is concentrated in non-negative weights; and
\item $N$ is concentrated in non-positive weights. 
\end{enumerate}
Then the forgetful functor ${\rm Mod}_R({\rm grCycSp}) \to {\rm grCycSp}$ induces an equivalence \[{\rm map}_{{\rm Mod}_R({\rm grCycSp})}(M, N) \xrightarrow{} {\rm map}_{{\rm grCycSp}}(M, N)\] of mapping spectra. 
\end{lemma}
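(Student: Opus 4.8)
The plan is to apply Lemma~\ref{lem:mappingspectramodules} in the presentably symmetric monoidal $\infty$-category $\mathcal{C} = {\rm grCycSp}$, which expresses ${\rm map}_{{\rm Mod}_R({\rm grCycSp})}(M, N)$ as the totalization of the cosimplicial object $[n] \mapsto {\rm map}_{{\rm grCycSp}}(M \otimes R^{\otimes n}, N)$, with the $n = 0$ term being precisely ${\rm map}_{{\rm grCycSp}}(M, N)$. The forgetful functor in the statement is the map from the totalization to the $0$th term, so it suffices to show that this cosimplicial object is constant in the relevant range — more precisely, that all the coface maps out of the $0$-simplices, and all higher structure, are equivalences, so that the totalization is computed by the $0$-simplices alone. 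For this it is enough to show that ${\rm map}_{{\rm grCycSp}}(M \otimes R^{\otimes n}, N) \xrightarrow{\simeq} {\rm map}_{{\rm grCycSp}}(M, N)$ via the unit maps ${\Bbb S} \to R$, i.e. that smashing $M$ with any positive number of copies of $R$ does not change the mapping spectrum into $N$.

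To see this, I would first use that mapping spectra in ${\rm grCycSp}$ can be computed weight by weight after passing to a mapping spectrum built from the $S^1$-equivariant data and the equalizer defining the cyclotomic structure; the key point is that all of this is compatible with the weight grading, and a map of graded objects is determined by its components in each weight. Now write $R \simeq {\Bbb S} \oplus R_{>0}$ as a graded object, using hypothesis (1) that $R_0 = {\Bbb S}$; then $M \otimes R^{\otimes n} \simeq M \oplus (M \otimes (\text{sum of terms involving at least one } R_{>0} \text{ factor}))$. By hypothesis (2), $M$ is concentrated in non-negative weights, and $R_{>0}$ is concentrated in strictly positive weights, so every term in $M \otimes R^{\otimes n}$ other than the $M$ summand is concentrated in \emph{strictly positive} weights. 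On the other hand, $N$ is concentrated in non-positive weights by hypothesis (3). Since Day convolution adds weights, a mapping spectrum in ${\rm grSp}$ (and hence the refined mapping spectrum in ${\rm grCycSp}$, which is built from it) from something concentrated in strictly positive weights into something concentrated in non-positive weights vanishes. Therefore ${\rm map}_{{\rm grCycSp}}(M \otimes R^{\otimes n}, N) \simeq {\rm map}_{{\rm grCycSp}}(M, N)$, and moreover the unit maps induce this equivalence.

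Consequently the cosimplicial object computing ${\rm map}_{{\rm Mod}_R}(M, N)$ has all coface maps from degree $0$ equivalences, and in fact is levelwise equivalent to the constant cosimplicial spectrum on ${\rm map}_{{\rm grCycSp}}(M, N)$ in a way compatible with the cosimplicial structure (the simplicial identities force the remaining coface and codegeneracy maps to be equivalences too once one knows the object is, weightwise, concentrated appropriately); hence its totalization is ${\rm map}_{{\rm grCycSp}}(M, N)$, and the forgetful map is the claimed equivalence.

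The main obstacle I anticipate is the bookkeeping in the second paragraph: one must be careful that the ``refined'' mapping spectrum in ${\rm grCycSp}$ — which involves a limit over the $BS^1$-action together with the equalizer encoding the compatibility with the Frobenii $F_p$ — genuinely inherits the weight-grading vanishing from the underlying ${\rm grSp}$-level statement. Since $F_p$ sends weight $i$ to weight $pi$, it preserves the classes ``concentrated in strictly positive weights'' and ``concentrated in non-positive weights'', so the equalizer diagram computing ${\rm map}_{{\rm grCycSp}}$ is built entirely out of mapping spectra in $({\rm grSp})^{BS^1}$ between objects of the stated weight types, each of which vanishes; but writing this out carefully, and checking that Lemma~\ref{lem:mappingspectramodules}'s cosimplicial object is the one relevant here (rather than merely its limit), is where the real work lies. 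A cleaner alternative, which I would pursue if the cosimplicial analysis gets unwieldy, is to argue directly: the cofiber of ${\rm map}_{{\rm Mod}_R}(M,N) \to {\rm map}_{{\rm grCycSp}}(M,N)$ is built from ${\rm map}_{{\rm grCycSp}}(M \otimes R_{>0}^{\otimes \geq 1}, N)$-type terms, all of which vanish by the weight argument, so the map is an equivalence.
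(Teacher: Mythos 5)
Your proposal is correct and follows essentially the same route as the paper: both invoke Lemma~\ref{lem:mappingspectramodules} to express ${\rm map}_{{\rm Mod}_R}(M,N)$ as the limit of the cosimplicial object $[n]\mapsto{\rm map}_{{\rm grCycSp}}(M\otimes R^{\otimes n},N)$, then use the weight constraints to show the comparison to the constant diagram on ${\rm map}_{{\rm grCycSp}}(M,N)$ is a levelwise equivalence, with the crucial check being that the Tate constructions $(-)^{tC_p}$ (multiplying weight by $p$) preserve the non-negative/non-positive weight classes. Your splitting $R\simeq\Bbb S\oplus R_{>0}$ is just a cosmetic variant of the paper's restriction to the weight-zero piece $M_0\otimes R_0^{\otimes n}$; and the cleanest way to finish, as the paper does, is to phrase the levelwise equivalences as a map of cosimplicial objects induced by the units $\Bbb S\to R$ (or by the inclusion $M_0\to M$), rather than appealing to simplicial identities or to a cofiber-of-totalizations argument, the latter of which doesn't literally hold since limits need not commute with cofibers.
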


\begin{proof} By Lemma \ref{lem:mappingspectramodules}, we obtain the top left-hand vertical equivalence in \begin{equation}\label{eq:compweightdiagram}\begin{tikzcd}{\rm map}_{{\rm Mod}_R({\rm grCycSp})}(M, N) \ar{r} \ar{d}{\simeq} & {\rm map}_{{\rm grCycSp}}(M, N) \ar{d}{\simeq} \\ {\rm lim}({\rm map}_{{\rm grCycSp}}(M \otimes R^{\otimes n}, N)) \ar{d}{\simeq} & {\rm lim}({\rm map}_{{\rm grCycSp}}(M \otimes {\Bbb S}^{\otimes n}, N)) \ar{d}{\simeq}\\ {\rm lim}({\rm map}_{{\rm grCycSp}}(M_0 \otimes R_0^{\otimes n}, N)) \ar{r}{\simeq } & {\rm lim}({\rm map}_{{\rm grCycSp}}(M_0 \otimes {\Bbb S}^{\otimes n}, N)). \end{tikzcd}\end{equation}  The top right-hand vertical map is an equivalence for the same reason (although in this case the limit is simply canonically equivalent to that of the constant diagram ${\rm map}_{{\rm grCycSp}}(M, N)$). We proceed to explain the remaining vertical equivalences. Since $M$ and $R$ are concentrated in non-negative weights, so is $M \otimes R^{\otimes n}$, and its weight zero piece is $M_0 \otimes R_0^{\otimes n}$. We now argue that the left-hand bottom vertical map is an equivalence. Each level of the limit in the source sits in the pullback \[\begin{tikzpicture}[baseline= (a).base]
\node[scale=.83] (a) at (0,0){\begin{tikzcd}[column sep = tiny]{\rm map}_{{\rm grCycSp}}(M \otimes R^{\otimes n}, N) \ar{r} \ar{d} & \prod_p {\rm map}_{{\rm Fun}(\Delta^1, {\rm grSp}^{BS^1})}(M \otimes R^{\otimes n} \to (M \otimes R^{\otimes n})^{tC_p}, N \to N^{tC_p}) \ar{d} \\ {\rm map}_{{\rm grSp}^{BS^1}}(M \otimes R^{\otimes n}, N) \ar{r} & \prod_p {\rm map}_{{\rm grSp}^{BS^1}}(M \otimes R^{\otimes n}, N) \times {\rm map}_{{\rm grSp}^{BS^1}}((M \otimes R^{\otimes n})^{tC_p}, N^{tC_p})\end{tikzcd}};\end{tikzpicture}\] of mapping spectra, while those of the target sit in the pullback square \[\begin{tikzpicture}[baseline= (a).base]
\node[scale=.81] (a) at (0,0){\begin{tikzcd}[column sep = tiny]{\rm map}_{{\rm grCycSp}}(M_0 \otimes R_0^{\otimes n}, N) \ar{r} \ar{d} & \prod_p {\rm map}_{{\rm Fun}(\Delta^1, {\rm grSp}^{BS^1})}(M_0 \otimes R_0^{\otimes n} \to (M_0 \otimes R_0^{\otimes n})^{tC_p}, N \to N^{tC_p}) \ar{d} \\ {\rm map}_{{\rm grSp}^{BS^1}}(M_0 \otimes R_0^{\otimes n}, N) \ar{r} & \prod_p {\rm map}_{{\rm grSp}^{BS^1}}(M_0 \otimes R_0^{\otimes n}, N) \times {\rm map}_{{\rm grSp}^{BS^1}}((M_0 \otimes R_0^{\otimes n})^{tC_p}, N^{tC_p}).\end{tikzcd}};\end{tikzpicture}\] The inclusion of the weight zero component $M_0 \otimes R_0^{\otimes n} \to M \otimes R^{\otimes n}$ induces compatible equivalences between each corner of the squares: For example, the induced morphism ${\rm map}_{{\rm grSp}^{BS^1}}(M \otimes R^{\otimes n}, N) \xrightarrow{} {\rm map}_{{\rm grSp}^{BS^1}}(M_0 \otimes R_0^{\otimes n}, N)$ is an equivalence, since $M$, $R$, and hence $M \otimes R^{\otimes n}$ are concentrated in non-negative weights, while $N$ is concentrated in non-positive weights. We observe the same is true once the objects are replaced by their $C_p$-Tate constructions, from which the equivalence involving mapping spaces in the arrow category ${\rm Fun}(\Delta^1, {\rm grSp}^{BS^1})$ follows from the usual ${\rm End}$-description of mapping spaces in functor categories. We conclude that the lower left-hand vertical map in \eqref{eq:compweightdiagram} is an equivalence, and the same argument applies to the lower right-hand vertical map. The bottom map in \eqref{eq:compweightdiagram} is an equivalence by the assumption that $R_0 = {\Bbb S}$, which concludes the proof. 
\end{proof}

\subsection{The setup of \cite{RSS25}}\label{subsec:rssrev} For an even ${\Bbb E}_2$-ring $A$ with $x \in \pi_{2d}(A)$, we now follow \cite{RSS25} to give one construction of the term ${\rm THH}(A, \langle x \rangle)$ participating in the cofiber sequences \eqref{eq:rssres}. While \cite{RSS25} provide a general theory for ${\Bbb E}_k$-log rings and their ${\rm THH}$, we restrict attention to the special cases relevant to Theorems \ref{thm:mainthm} and \ref{thm:mainthmspec}. Other sources with related material include \cite[Section 3.4]{Lur15}, \cite[Section 4.2]{HW22}, and \cite{ABM23}. 

As in \cite{Lur15}, there is a map of ${\Bbb E}_2$-spaces \[\xi_{2d} \colon {\Bbb N} \xrightarrow{\cdot d} {\Bbb Z} \simeq \Omega^2{\rm BU}(1) \to \Omega^2{\rm BU} \simeq {\rm BU} \times {\Bbb Z} \xrightarrow{J_{{\Bbb C}}} {\rm Pic}_{{\Bbb S}}\] with $J_{\Bbb C}$ the complex $J$-homomorphism. The associated Thom spectrum gives the ${\Bbb E}_{2}$-ring ${\Bbb S}[t_{2d}] := {\rm Th}_{{\Bbb S}}(\xi_{2d})$. The class $x$ gives a map of ${\Bbb E}_2$-rings ${\Bbb S}[t_{2d}] \to A$ by \cite[Corollary 4.8]{RSS25}; this can be achieved by the results of \cite{GKRW23} (see \cite[Proposition 4.6]{RSS25}), much in analogy with \cite[Proposition 3.15]{ABM23}. We here crucially use that $A$ is assumed to be even. The ${\Bbb E}_2$-ring ${\Bbb S}[t_{2d}]$ can be enhanced with a grading, and it is the free ${\Bbb E}_1$-ring on a class $t_{2d}$ of degree $2d$ in weight $1$. We may invert $t_{2d}$ to obtain ${\Bbb S}[t_{2d}^{\pm 1}]$, either by the formalism of \cite[Section 7.2.3]{Lur17} (the Ore condition is vacuous in this case, as $t_{2d}$ is of even degree), or as the Thom spectrum of the ${\Bbb E}_2$-map ${\xi}_{2d}^{\rm gp} \colon {\Bbb Z} \to {\rm Pic}_{{\Bbb S}}$.  

  There is a defining right-hand pullback square \[\begin{tikzcd}B^{\rm cyc}({\Bbb N}) \ar{r} \ar{d} & B^{\rm rep}({\Bbb N}) \ar{d} \ar{r} & B^{\rm cyc}({\Bbb Z}) \ar{d} \ar{r}{B^{\rm cyc}(\xi_{2d}^{\rm gp})} & B^{\rm cyc}({\rm Pic}_{\Bbb S}) \ar{r}{\epsilon} & {\rm Pic}_{\Bbb S} \\ {\Bbb N} \ar{r}{=} & {\Bbb N} \ar{r} & {\Bbb Z}.\end{tikzcd}\] We shall sometimes write \[B^{\rm cyc}({\Bbb N}) \xrightarrow{B^{\rm cyc}(\xi_{2d})} B^{\rm cyc}({\rm Pic}_{{\Bbb S}}) \quad \text{and} \quad B^{\rm cyc}({\Bbb Z}) \xrightarrow{B^{\rm cyc}(\xi_{2d}^{\rm gp})} B^{\rm cyc}({\rm Pic}_{{\Bbb S}})\] for the resulting maps to $B^{\rm cyc}({\rm Pic}_{\Bbb S})$. We have ${\rm THH}({\Bbb S}[t_{2d}]) \simeq {\rm Th}_{{\Bbb S}}(\epsilon \circ B^{\rm cyc}(\xi_{2d}))$ and ${\rm THH}({\Bbb S}[t_{2d}^{\pm 1}]) \simeq {\rm Th}_{{\Bbb S}}(\epsilon \circ B^{\rm cyc}(\xi_{2d}^{\rm gp}))$ by \cite[Proposition 7.11]{RSS25}. We shall set ${\rm THH}({\Bbb S}[t_{2d}], \langle t_{2d} \rangle) := {\rm THH}({\Bbb S}[t_{2d}^{\pm 1}])_{\ge 0}$ to be ``weight-connective cover'' of ${\rm THH}({\Bbb S}[t_{2d}^{\pm 1}])$, and we define \[{\rm THH}(A, \langle x \rangle) := {\rm THH}(A) \otimes_{{\rm THH}({\Bbb S}[t_{2d}])} {\rm THH}({\Bbb S}[t_{2d}], \langle t_{2d} \rangle)\] as ${\rm THH}(A)$-modules in cyclotomic spectra. See \cite[Section 11]{RSS25} in particular for a discussion in this special case. 

\subsection{Discussion of the cyclotomic structure} We continue to follow \cite[Section 11]{RSS25}. The resulting diagram \begin{equation}\label{eq:diagramofgradedcyc}{\rm THH}({\Bbb S}[t_{2d}]) \xrightarrow{} {\rm THH}({\Bbb S}[t_{2d}^{\pm 1}])_{\ge 0} \xrightarrow{} {\rm THH}({\Bbb S}[t_{2d}^{\pm 1}])\end{equation} is one of ${\rm THH}({\Bbb S}[t_{2d}])$-modules in graded cyclotomic spectra. The first map is an equivalence in strictly positive weights, and the middle term is equivalent to ${\rm Th}_{\Bbb S}(\epsilon \circ B^{\rm rep}(\xi_{2d}))$ as ${\rm Th}_{{\Bbb S}}(\epsilon \circ B^{\rm cyc}(\xi_{2d})) \simeq {\rm THH}({\Bbb S}[t_{2d}])$-modules. At the level of underlying cyclotomic spectra, the diagram \eqref{eq:diagramofgradedcyc} splits as the inclusions \begin{equation}\label{eq:diagramofgradedcycsplit}\begin{tikzcd}[row sep = small]{\Bbb S}^{\rm triv} \oplus {\rm THH}({\Bbb S}[t_{2d}^{\pm 1}])_{> 0} \ar{d} \\ {\Bbb S}^{\rm triv} \oplus {\Bbb S}^{\rm triv}[1] \oplus {\rm THH}({\Bbb S}[t_{2d}^{\pm 1}])_{> 0} \ar{d} \\ {\Bbb S}^{\rm triv} \oplus {\Bbb S}^{\rm triv}[1] \oplus {\rm THH}({\Bbb S}[t_{2d}^{\pm 1}])_{> 0} \oplus {\rm THH}({\Bbb S}[t_{2d}^{\pm 1}])_{< 0}.\end{tikzcd}\end{equation} We have here used \cite[Proposition 11.3]{RSS25} together with \cite[Example 2.2.9]{McC24} to determine the weight zero component as ${\Bbb S}^{\rm triv} \oplus {\Bbb S}^{\rm triv}[1]$ (cf.\ Section \ref{subsec:froblift}). 

\begin{example}\label{ex:degreezero} Let us spell out the structures described above when $d = 0$. Let ${\Bbb Z} \to ({\Bbb Z}, \le)$ denote the canonical functor from the discrete category ${\Bbb Z}$ to the linearly ordered set $({\Bbb Z}, \le)$. Restriction along this functor induces the functor \[{\rm FilSp} := {\rm Fun}(({\Bbb Z}, \le), {\rm Sp}) \xrightarrow{{\rm Res}} {\rm Fun}({\Bbb Z}, {\rm Sp}) =: {\rm grSp} \] from filtered to graded spectra. The symmetric monoidal equivalence $(- 1) \cdot \colon {\Bbb Z} \xrightarrow{} {\Bbb Z}$ induces a monoidal functor $L_{-1} \colon {\rm grSp} \xrightarrow{} {\rm grSp}$. We shall write ${\Bbb S}[t] := L_{-1}{\rm Res}(1_{{\rm FilSp}})$ for the ${\Bbb E}_{\infty}$-algebra in ${\rm grSp}$ obtained by applying the lax monoidal composite functor to the monoidal unit $1_{{\rm FilSp}}$ of ${\rm FilSp}$. Concretely, the underlying graded spectrum of ${\Bbb S}[t]$ is concentrated as ${\Bbb S}$ in all non-negative weights: This is opposite the convention in \cite{Lur15} (where the functor $L_{-1}$ is not applied in the definition of ${\Bbb S}[t]$), but more convenient for our purposes. 

In this case, ${\rm THH}({\Bbb S}[t^{\pm 1}])$ is an ${\Bbb E}_{\infty}$-algebra in ${\rm grCycSp}$ by \cite[Corollary A.9, Example A.10]{AMMN22}. The splittings \eqref{eq:diagramofgradedcyc} now take the explicit form \begin{equation}\label{eq:susprepsplit}\begin{tikzcd}[row sep = small] {\Bbb S}^{\rm triv} \oplus {\Bbb S}[B^{\rm cyc}_{> 0}(\langle t \rangle)] \ar{d} \\  {\Bbb S}^{\rm triv} \oplus {\Bbb S}^{\rm triv}[1] \oplus {\Bbb S}[B^{\rm cyc}_{> 0}(\langle t \rangle)] \ar{d} \\  {\Bbb S}^{\rm triv} \oplus {\Bbb S}^{\rm triv}[1] \oplus {\Bbb S}[B^{\rm cyc}_{> 0}(\langle t \rangle)] \oplus {\Bbb S}[B^{\rm cyc}_{< 0}(\langle t \rangle)^{\rm gp}]\end{tikzcd}\end{equation} as cyclotomic spectra. 
\end{example}

\section{Logarithmic ${\rm THH}$ and localization}\label{sec:logthhloc}

This section is devoted to the proof of Theorem \ref{thm:mainthmspec}. After introducing our basic setup in Section \ref{subsec:transstudy}, we give an outline of the proof in Section \ref{subsec:proofoutline} that we proceed to realize. 

\subsection{Studying the transfer map}\label{subsec:transstudy} Consider the graded ${\Bbb E}_2$-ring ${\Bbb S}[t_{2d}]$ introduced in Section \ref{subsec:rssrev}. There is a map of ${\Bbb E}_2$-rings $p \colon {\Bbb S}[t_{2d}] \to {\Bbb S}$ obtained by collapsing the strictly positive weights (cf.\ \cite[Lemma B.0.6]{HW22}). It fits in the cofiber sequence \[{\Bbb S}[t_{2d}][2d] \xrightarrow{\cdot t_{2d}} {\Bbb S}[t_{2d}] \xrightarrow{} {\Bbb S}\] of ${\Bbb S}[t_{2d}]$-modules, which exhibits ${\Bbb S}$ as an object of ${\rm Perf}({\Bbb S}[t_{2d}])$. From this, we obtain a transfer map $p_* \colon {\rm Perf}({\Bbb S}) \to {\rm Perf}({\Bbb S}[t_{2d}])$. This sits in the commutative diagram \[\begin{tikzcd}{\rm Perf}({\Bbb S}[t_{2d}])^{t_{2d} - {\rm nil}} \ar{r} & {\rm Perf}({\Bbb S}[t_{2d}]) \ar{r} & {\rm Perf}({\Bbb S}[t_{2d}^{\pm 1}]) \\ {\rm Perf}({\Bbb S}) \ar{u}{p_*^{t_{2d} - {\rm nil}}} \ar{r}{p_*} & {\rm Perf}({\Bbb S}[t_{2d}]), \ar{u}{=}\end{tikzcd}\] where the upper horizontal sequence involving the $t_{2d}$-nilpotent perfect module spectra is Karoubi. Applying ${\rm THH}$ and taking the cofiber of ${\rm THH}(p_*)$, we obtain the diagram of horizontal cofiber sequences \begin{equation}\label{eq:t2dnilcofiberseq}\begin{tikzcd}{\rm THH}({\rm Perf}({\Bbb S}[t_{2d}])^{t_{2d} - {\rm nil}}) \ar{r} & {\rm THH}({\rm Perf}({\Bbb S}[t_{2d}])) \ar{r} & {\rm THH}({\rm Perf}({\Bbb S}[t_{2d}^{\pm 1}])) \\ {\rm THH}({\rm Perf}({\Bbb S})) \ar{u}{{\rm THH}(p_*^{t_{2d} - {\rm nil}})} \ar{r}{{\rm THH}(p_*)} & {\rm THH}({\rm Perf}({\Bbb S}[t_{2d}])) \ar{u}{=} \ar{r} & {\rm THH}({\rm Perf}({\Bbb S}[t_{2d}] | t_{2d})) \ar{u}{f}\end{tikzcd}\end{equation} in cyclotomic spectra that we shall consider throughout. 

The map ${\rm THH}({\Bbb S}[t_{2d}]) \to {\rm THH}({\Bbb S}[t_{2d}^{\pm 1}])$ is the inclusion of a summand of cyclotomic spectra, cf.\ \eqref{eq:diagramofgradedcycsplit}. Shuffling summands, we rewrite it as the inclusion \begin{equation}\label{eq:thspecinc}{\Bbb S}^{\rm triv} \oplus {\rm THH}({\Bbb S}[t_{2d}^{\pm 1}])_{> 0} \to ({\Bbb S}^{\rm triv} \oplus {\rm THH}({\Bbb S}[t_{2d}^{\pm 1}]_{> 0}) \oplus ({\Bbb S}^{\rm triv}[1] \oplus {\rm THH}({\Bbb S}[t_{2d}^{\pm 1}])_{< 0}).\end{equation} Consequently, there is an equivalence \begin{equation}\label{eq:t2dnilsplit}{\rm THH}({\rm Perf}({\Bbb S}[t_{2d}])^{t_{2d} - {\rm nil}}) \simeq {\Bbb S}^{\rm triv} \oplus {\rm THH}({\Bbb S}[t_{2d}^{\pm 1}])_{< 0}[-1]\end{equation} of cyclotomic spectra determined by the upper cofiber sequence in \eqref{eq:t2dnilcofiberseq}. 

\begin{lemma}\label{lem:nulltransspec} The map  ${\rm THH}({\Bbb S}) \xrightarrow{{\rm THH}(p_*)} {\rm THH}({\Bbb S}[t_{2d}])$ of cyclotomic spectra is null-homotopic. 
\end{lemma}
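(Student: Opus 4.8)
The plan is to show that the transfer map $p_* \colon \mathrm{Perf}(\mathbb{S}) \to \mathrm{Perf}(\mathbb{S}[t_{2d}])$ already becomes null after applying $\mathrm{THH}$, by exploiting the weight grading. The point is that $\mathrm{THH}(\mathbb{S}) = \mathbb{S}^{\mathrm{triv}}$ is concentrated in weight zero, whereas the object $\mathbb{S} \in \mathrm{Perf}(\mathbb{S}[t_{2d}])$ that $p_*$ selects has, via the cofiber sequence $\mathbb{S}[t_{2d}][2d] \xrightarrow{\cdot t_{2d}} \mathbb{S}[t_{2d}] \to \mathbb{S}$, a ``class in weight $1$'' that obstructs any weight-zero splitting. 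Concretely, $\mathrm{THH}(p_*)$ is a map of $\mathrm{THH}(\mathbb{S}[t_{2d}])$-modules in \emph{graded} cyclotomic spectra: its source $\mathrm{THH}(\mathbb{S})$ is, as a $\mathrm{THH}(\mathbb{S}[t_{2d}])$-module, obtained by base change along $\mathrm{THH}(p)\colon \mathrm{THH}(\mathbb{S}[t_{2d}]) \to \mathrm{THH}(\mathbb{S}[t_{2d}]) \otimes_{\mathbb{S}[t_{2d}]} \mathbb{S}$, and the latter sits in a cofiber sequence with the source term concentrated in weights $\ge 1$. So the first step is to upgrade the statement: it suffices to prove that $\mathrm{THH}(p_*)$ is null as a map in $\mathrm{grCycSp}$.

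Next I would invoke Lemma \ref{lem:gradedcontrol}. Take $R = \mathrm{THH}(\mathbb{S}[t_{2d}])$, which by \eqref{eq:diagramofgradedcycsplit} is concentrated in non-negative weights with $R_0 = \mathbb{S}$ (here we use $\mathrm{THH}(\mathbb{S}[t_{2d}]) \simeq \mathrm{Th}_{\mathbb{S}}(\epsilon \circ B^{\mathrm{cyc}}(\xi_{2d}))$ and that $B^{\mathrm{cyc}}_{\{0\}}(\langle t\rangle) = \ast$). The target of $\mathrm{THH}(p_*)$, namely $\mathrm{THH}(\mathbb{S}[t_{2d}])$ itself, is an $R$-module concentrated in non-negative weights. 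The source $\mathrm{THH}(\mathbb{S})$, viewed as an $R$-module via $\mathrm{THH}(p)$, I claim is concentrated in non-positive weights — in fact in weight zero only, since $\mathbb{S}[t^{\pm1}_{2d}]$-based bar resolutions show $\mathrm{THH}(\mathbb{S}) \simeq \mathbb{S}^{\mathrm{triv}}$ with trivial grading. Thus Lemma \ref{lem:gradedcontrol} applies (with $M = \mathrm{THH}(\mathbb{S})$ in non-positive weights and $N = \mathrm{THH}(\mathbb{S}[t_{2d}])$ in non-negative weights — one uses the symmetric version, or swaps the roles, the hypotheses being symmetric in the sign of the weight after relabeling), giving
\[
\mathrm{map}_{\mathrm{Mod}_R(\mathrm{grCycSp})}(\mathrm{THH}(\mathbb{S}), \mathrm{THH}(\mathbb{S}[t_{2d}])) \xrightarrow{\ \simeq\ } \mathrm{map}_{\mathrm{grCycSp}}(\mathrm{THH}(\mathbb{S}), \mathrm{THH}(\mathbb{S}[t_{2d}])).
\]
Hence it is enough to see the underlying map of graded cyclotomic spectra is null, and since $\mathrm{THH}(\mathbb{S})$ is the monoidal unit $\mathbb{S}^{\mathrm{triv}}$ concentrated in weight zero, this reduces to asking that the weight-zero component $\mathrm{THH}(p_*)_0 \colon \mathbb{S}^{\mathrm{triv}} \to R_0 = \mathbb{S}^{\mathrm{triv}}$ in $\mathrm{CycSp}$ is null.

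Finally I would identify this weight-zero component. By construction $p_*$ sends the generator of $\mathrm{Perf}(\mathbb{S})$ to $\mathbb{S} \in \mathrm{Perf}(\mathbb{S}[t_{2d}])$, whose class in $\mathrm{THH}$ records the Euler characteristic / trace of multiplication by $t_{2d}$ on the cofiber sequence $\mathbb{S}[t_{2d}][2d]\xrightarrow{\cdot t_{2d}} \mathbb{S}[t_{2d}] \to \mathbb{S}$; this trace lands in strictly positive weight (it is detected by $t_{2d}$, which has weight $1$), so its weight-zero part vanishes. Alternatively — and this is the cleaner route — one computes directly from \eqref{eq:diagramofgradedcycsplit}: the composite $\mathrm{THH}(p_*)$ followed by $\mathrm{THH}(\mathbb{S}[t_{2d}]) \to \mathrm{THH}(\mathbb{S}[t^{\pm1}_{2d}])$ is the transfer for $\mathbb{S}[t^{\pm1}_{2d}]$ over $\mathbb{S}[t_{2d}]$, which factors through $\mathrm{THH}(\mathbb{S}[t^{\pm1}_{2d}])_{>0} \oplus \mathbb{S}^{\mathrm{triv}}[1] \oplus \mathrm{THH}(\mathbb{S}[t^{\pm1}_{2d}])_{<0}$ — i.e. through the complement of $R_0 = \mathbb{S}^{\mathrm{triv}}$ — because $\mathbb{S}$ is not a perfect $\mathbb{S}[t^{\pm1}_{2d}]$-module in a way compatible with the weight-zero summand; combined with the fact that $R \to \mathrm{THH}(\mathbb{S}[t^{\pm1}_{2d}])$ is the inclusion of a summand and is injective on the weight-zero component, nullity of $\mathrm{THH}(p_*)_0$ follows. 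The main obstacle is the bookkeeping in this last step: one must be careful that the graded $R$-module structure on $\mathrm{THH}(\mathbb{S})$ is genuinely the one coming from $\mathrm{THH}(p)$, and that the identification of $\mathrm{THH}(\mathbb{S})$ with $\mathbb{S}^{\mathrm{triv}}$ respects both the grading \emph{and} the cyclotomic structure — the latter via the spaces-with-Frobenius-lifts input of Section \ref{subsec:froblift}, which is exactly what pins down the weight-zero summand of everything in sight.
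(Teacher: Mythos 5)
There is a genuine gap, and the proposal is considerably more elaborate than the paper's one-line argument. The central error is your invocation of Lemma~\ref{lem:gradedcontrol}. You take $M = \mathrm{THH}(\mathbb{S})$ concentrated in weight zero and $N = \mathrm{THH}(\mathbb{S}[t_{2d}])$ concentrated in non-negative weights, and assert that ``one uses the symmetric version, or swaps the roles, the hypotheses being symmetric in the sign of the weight after relabeling.'' This is false: $R = \mathrm{THH}(\mathbb{S}[t_{2d}])$ is concentrated in \emph{non-negative} weights, and the bar resolution $M \otimes R^{\otimes n}$ in the proof of Lemma~\ref{lem:gradedcontrol} only shifts weights upward, which is why $N$ must lie in non-positive weights for the weight-zero truncation to detect the full mapping spectrum. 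No relabeling puts your situation into the lemma's scope, because you cannot change the sign of the weights of $R$ without changing $R$. Worse, the equivalence you extract,
\[
\mathrm{map}_{\mathrm{Mod}_R(\mathrm{grCycSp})}(\mathrm{THH}(\mathbb{S}), \mathrm{THH}(\mathbb{S}[t_{2d}])) \xrightarrow{\ \simeq\ } \mathrm{map}_{\mathrm{grCycSp}}(\mathrm{THH}(\mathbb{S}), \mathrm{THH}(\mathbb{S}[t_{2d}])),
\]
would --- after forgetting the grading and invoking Proposition~\ref{prop:mainund} --- force $\mathrm{THH}(p_*)$ to be null as a map of $\mathrm{THH}(\mathbb{S}[t_{2d}])$-modules. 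The Remark immediately following Lemma~\ref{lem:projectionformula} explains that this is false (the example of $\mathrm{Ext}^1_{R[t]}(R,R[t]) \neq 0$ is given precisely to illustrate the phenomenon), so your chain of reasoning proves something contradictory.

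Your ``cleaner route'' also contains an error: the composite of $\mathrm{THH}(p_*)$ with $\mathrm{THH}(\mathbb{S}[t_{2d}]) \to \mathrm{THH}(\mathbb{S}[t^{\pm 1}_{2d}])$ is not ``the transfer for $\mathbb{S}[t^{\pm 1}_{2d}]$ over $\mathbb{S}[t_{2d}]$'' --- there is no such transfer, since $\mathbb{S}[t^{\pm 1}_{2d}]$ is not perfect over $\mathbb{S}[t_{2d}]$. That composite is simply \emph{null}, because it is $\mathrm{THH}$ applied to the composite of exact functors $\mathrm{Perf}(\mathbb{S}) \to \mathrm{Perf}(\mathbb{S}[t_{2d}]) \to \mathrm{Perf}(\mathbb{S}[t^{\pm 1}_{2d}])$, and this composite is the zero functor since $\mathbb{S} \otimes_{\mathbb{S}[t_{2d}]} \mathbb{S}[t^{\pm 1}_{2d}] \simeq 0$. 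Once you notice this, no grading is needed: $\mathrm{THH}(p_*)$ factors through $\mathrm{THH}(\mathrm{Perf}(\mathbb{S}[t_{2d}])^{t_{2d}\text{-nil}}) \to \mathrm{THH}(\mathrm{Perf}(\mathbb{S}[t_{2d}]))$, which is the fiber of the split inclusion $\mathrm{THH}(\mathbb{S}[t_{2d}]) \hookrightarrow \mathrm{THH}(\mathbb{S}[t^{\pm 1}_{2d}])$ of summands exhibited in \eqref{eq:diagramofgradedcycsplit}, and the fiber map of a split inclusion is the zero map. That is the paper's proof, carried out directly in $\mathrm{CycSp}$. The graded module-theoretic machinery is the crucial content of the rest of the section, but it plays no role in this particular lemma, and forcing it in here creates exactly the difficulties you ran into.
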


\begin{proof} The map ${\rm THH}(p_*)$ factors as the composition \[{\rm THH}({\rm Perf}({\Bbb S})) \xrightarrow{{\rm THH}(p_*^{t_{2d} - {\rm nil}})} {\rm THH}({\rm Perf}({\Bbb S}[t_{2d}])^{t_{2d} - {\rm nil}}) \xrightarrow{} {\rm THH}({\rm Perf}({\Bbb S}[t_{2d}])),\] and the latter map is the fiber of the inclusion ${\rm THH}({\Bbb S}[t_{2d}]) \xrightarrow{} {\rm THH}({\Bbb S}[t_{2d}^{\pm 1}])$ of summands in cyclotomic spectra.
\end{proof}

\begin{corollary}\label{cor:undcycspspec} There is an equivalence of cyclotomic spectra \[{\rm THH}({\Bbb S}[t_{2d}^{\pm 1}])_{\ge 0} \xrightarrow{\simeq} {\rm THH}({\Bbb S}[t_{2d}] | t_{2d})\] under ${\rm THH}({\Bbb S}[t_{2d}])$.    
\end{corollary}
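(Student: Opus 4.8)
The plan is to realize both cyclotomic spectra in question as split extensions of ${\rm THH}({\Bbb S}[t_{2d}])$ sharing the same complementary summand ${\Bbb S}^{\rm triv}[1]$: for the target I would use Lemma \ref{lem:nulltransspec}, and for the source the explicit splitting \eqref{eq:diagramofgradedcycsplit}. Recall that ${\rm THH}({\Bbb S}[t_{2d}] | t_{2d})$ is by definition the cofiber of ${\rm THH}(p_*)$, and Lemma \ref{lem:nulltransspec} says this map is null-homotopic. Choosing a null-homotopy exhibits, in the stable $\infty$-category of cyclotomic spectra, an equivalence ${\rm THH}({\Bbb S}[t_{2d}] | t_{2d}) \simeq {\rm THH}({\Bbb S}[t_{2d}]) \oplus {\rm THH}({\Bbb S})[1]$ under which the canonical map out of ${\rm THH}({\Bbb S}[t_{2d}])$ becomes the inclusion of the first summand; in particular this is an equivalence under ${\rm THH}({\Bbb S}[t_{2d}])$. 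Since ${\rm THH}({\Bbb S}) \simeq {\Bbb S}^{\rm triv}$ (it is the unit of cyclotomic spectra, and also occurs as the weight-zero part of ${\rm THH}({\Bbb S}[t_{2d}])$ recorded in \eqref{eq:diagramofgradedcycsplit}), we may rewrite this as ${\rm THH}({\Bbb S}[t_{2d}] | t_{2d}) \simeq {\rm THH}({\Bbb S}[t_{2d}]) \oplus {\Bbb S}^{\rm triv}[1]$ under ${\rm THH}({\Bbb S}[t_{2d}])$.

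For the source, I would read off \eqref{eq:diagramofgradedcycsplit} directly: the structure map ${\rm THH}({\Bbb S}[t_{2d}]) \to {\rm THH}({\Bbb S}[t_{2d}^{\pm 1}])_{\ge 0}$ of \eqref{eq:diagramofgradedcyc} is, at the level of underlying cyclotomic spectra, the inclusion of the summand ${\Bbb S}^{\rm triv} \oplus {\rm THH}({\Bbb S}[t_{2d}^{\pm 1}])_{>0}$ into ${\Bbb S}^{\rm triv} \oplus {\Bbb S}^{\rm triv}[1] \oplus {\rm THH}({\Bbb S}[t_{2d}^{\pm 1}])_{>0}$, so that ${\rm THH}({\Bbb S}[t_{2d}^{\pm 1}])_{\ge 0} \simeq {\rm THH}({\Bbb S}[t_{2d}]) \oplus {\Bbb S}^{\rm triv}[1]$ under ${\rm THH}({\Bbb S}[t_{2d}])$ as well. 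Combining the two descriptions — taking the identity on the ${\rm THH}({\Bbb S}[t_{2d}])$-summand and the identity on the ${\Bbb S}^{\rm triv}[1]$-summand — then produces the desired equivalence ${\rm THH}({\Bbb S}[t_{2d}^{\pm 1}])_{\ge 0} \xrightarrow{\simeq} {\rm THH}({\Bbb S}[t_{2d}] | t_{2d})$ of cyclotomic spectra under ${\rm THH}({\Bbb S}[t_{2d}])$.

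I do not expect a serious obstacle, since the real content has been front-loaded into Lemma \ref{lem:nulltransspec} and the explicit computations of Section \ref{subsec:rssrev}. The only points requiring care are bookkeeping: one must verify that the decomposition \eqref{eq:diagramofgradedcycsplit} is compatible with the structure map from ${\rm THH}({\Bbb S}[t_{2d}])$, so that the identifications above are genuinely of cyclotomic spectra \emph{under} ${\rm THH}({\Bbb S}[t_{2d}])$ rather than abstract equivalences, and one must pin down ${\rm THH}({\Bbb S}) \simeq {\Bbb S}^{\rm triv}$ — the latter, for instance, via a retract argument using the unit ${\Bbb S} \to {\Bbb S}[t_{2d}]$ together with the collapse map $p$, which identifies ${\rm THH}({\Bbb S})$ with the weight-zero part of ${\rm THH}({\Bbb S}[t_{2d}])$.
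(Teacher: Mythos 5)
Your proof is correct and is essentially the same as the paper's: both split ${\rm THH}({\Bbb S}[t_{2d}] | t_{2d}) \simeq {\rm THH}({\Bbb S}[t_{2d}]) \oplus {\Bbb S}^{\rm triv}[1]$ under ${\rm THH}({\Bbb S}[t_{2d}])$ via Lemma \ref{lem:nulltransspec}, and both identify ${\rm THH}({\Bbb S}[t_{2d}^{\pm 1}])_{\ge 0}$ with the same split extension using the decomposition \eqref{eq:diagramofgradedcycsplit}. The paper just packages the bookkeeping you flag at the end as a small commutative square of cyclotomic spectra with compatibly split vertical maps; otherwise the arguments coincide.
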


\begin{proof} There is a commutative diagram \[\begin{tikzcd}{\rm THH}({\Bbb S}[t_{2d}]) \ar{r} \ar{d} & {\rm THH}({\Bbb S}[t_{2d}^{\pm 1}])_{\ge 0} \ar{d}  \\ {\Bbb S}^{\rm triv} \ar{r} & {\Bbb S}^{\rm triv} \oplus {\Bbb S}^{\rm triv}[1] \end{tikzcd}\] of cyclotomic spectra. The two vertical maps are compatibly split, and so we may rewrite the diagram as \[\begin{tikzcd}{\Bbb S}^{\rm triv} \oplus {\rm THH}({\Bbb S}[t_{2d}^{\pm 1}])_{> 0} \ar{r} \ar{d} & {\Bbb S}^{\rm triv} \oplus {\Bbb S}^{\rm triv}[1] \oplus {\rm THH}({\Bbb S}[t_{2d}^{\pm 1}])_{> 0} \ar{d} \\ {\Bbb S}^{\rm triv} \ar{r} & {\Bbb S}^{\rm triv} \oplus {\Bbb S}^{\rm triv}[1].\end{tikzcd}\] We conclude using Lemma \ref{lem:nulltransspec}, which implies that there are equivalences \[{\rm THH}({\Bbb S}[t_{2d}] | t_{2d}) \simeq {\rm THH}({\Bbb S}[t_{2d}]) \oplus {\Bbb S}^{\rm triv}[1] \simeq ({\Bbb S}^{\rm triv} \oplus {\rm THH}({\Bbb S}[t_{2d}^{\pm 1}])_{> 0}) \oplus {\Bbb S}^{\rm triv}[1]\] of cyclotomic spectra, with right-hand side equivalent to ${\rm THH}({\Bbb S}[t_{2d}^{\pm 1}])_{\ge 0}$. 
\end{proof}

\begin{corollary}\label{cor:residuecycl} The residue maps \[{\rm THH}({\Bbb S}[t_{2d}^{\pm 1}])_{\ge 0} \xrightarrow{\partial^{\rm rep}} {\Bbb S}^{\rm triv}[1] \text{ and } {\rm THH}({\Bbb S}[t_{2d}^{\pm 1}])_{\ge 0} \xrightarrow{\simeq} {\rm THH}({\Bbb S}[t_{2d}] | t_{2d}) \xrightarrow{\partial} {\Bbb S}^{\rm triv}[1]\] are homotopic as maps of cyclotomic spectra. 
\end{corollary}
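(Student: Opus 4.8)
The plan is to reduce the statement to the weight-zero component and invoke the identification already made there. First, I would recall from Corollary~\ref{cor:undcycspspec} that the equivalence ${\rm THH}({\Bbb S}[t_{2d}^{\pm 1}])_{\ge 0} \xrightarrow{\simeq} {\rm THH}({\Bbb S}[t_{2d}] \mid t_{2d})$ is a map \emph{under} ${\rm THH}({\Bbb S}[t_{2d}])$; in particular it is the identity on the common summand ${\Bbb S}^{\rm triv} \oplus {\rm THH}({\Bbb S}[t_{2d}^{\pm 1}])_{>0}$ and is an iso onto the complementary ${\Bbb S}^{\rm triv}[1]$ summand. Since both residue maps in question land in ${\Bbb S}^{\rm triv}[1]$ and are induced by cofiber sequences whose first two terms are ${\rm THH}({\Bbb S}[t_{2d}])$ and ${\rm THH}({\Bbb S}[t_{2d}^{\pm1}])_{\ge 0}$ (respectively ${\rm THH}({\Bbb S}[t_{2d}] \mid t_{2d})$), I would argue that under the splitting \eqref{eq:diagramofgradedcycsplit} each residue map is the projection onto the ${\Bbb S}^{\rm triv}[1]$ summand, up to a self-equivalence of ${\Bbb S}^{\rm triv}[1]$, and hence the two agree after composing with the comparison equivalence $\varphi$.

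More concretely, the key steps are: (1) unwind the definition of $\partial^{\rm rep}$ as the boundary map of the cofiber sequence \eqref{eq:rssres} specialized to $(A, \langle x \rangle) = ({\Bbb S}[t_{2d}], \langle t_{2d} \rangle)$, so that $\partial^{\rm rep}$ is the map ${\rm THH}({\Bbb S}[t_{2d}^{\pm1}])_{\ge 0} \to {\rm THH}({\Bbb S}[t_{2d}^{\pm1}])_{\ge 0}/{\rm THH}({\Bbb S}[t_{2d}]) \simeq {\Bbb S}^{\rm triv}[1]$; (2) observe that the cofiber of ${\rm THH}({\Bbb S}[t_{2d}]) \to {\rm THH}({\Bbb S}[t_{2d}^{\pm1}])_{\ge 0}$ is, by the explicit splittings of \eqref{eq:diagramofgradedcyc}, exactly the ${\Bbb S}^{\rm triv}[1]$ in weight zero; (3) do the same for $\partial$, which by construction is the shifted boundary map of the cofiber sequence defining ${\rm THH}({\Bbb S}[t_{2d}] \mid t_{2d})$ as the cofiber of ${\rm THH}(p_*)$, i.e.\ ${\rm THH}({\Bbb S}[t_{2d}] \mid t_{2d}) \to {\rm THH}({\Bbb S}[t_{2d}] \mid t_{2d})/{\rm THH}({\Bbb S}[t_{2d}]) \simeq {\Bbb S}^{\rm triv}[1]$, using Lemma~\ref{lem:nulltransspec} to identify the cofiber; and (4) check that the equivalence $\varphi$ of Corollary~\ref{cor:undcycspspec}, being a map under ${\rm THH}({\Bbb S}[t_{2d}])$, induces the \emph{identity} on the quotient ${\Bbb S}^{\rm triv}[1]$ — so that $\partial^{\rm rep}$ and $\partial \circ \varphi$ are literally the same map to ${\Bbb S}^{\rm triv}[1]$, not merely homotopic up to a unit.

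The step I expect to require the most care is step~(4): a map under $X$ between two cofibers of $X$-modules automatically induces a map on quotients, but one must verify that it is compatible with the \emph{chosen} identifications of those quotients with ${\Bbb S}^{\rm triv}[1]$ coming, on one side, from the replete-bar/Frobenius-lift computation of Section~\ref{subsec:froblift} and \cite[Proposition 11.3]{RSS25}, and on the other, from Lemma~\ref{lem:nulltransspec}. Concretely one needs that both identifications are pinned down by the same recipe — reading off the weight-zero ${\Bbb S}^{\rm triv}[1]$ as the image of ${\Bbb S}[B^{\rm rep}_{\{0\}}(\langle t \rangle)]/{\Bbb S}[B^{\rm cyc}_{\{0\}}(\langle t \rangle)] \simeq {\Bbb S}[S^1/C_0]/{\Bbb S} \simeq {\Bbb S}^{\rm triv}[1]$ as spaces with Frobenius lifts — and that the splitting of the upper Karoubi sequence in \eqref{eq:t2dnilcofiberseq} produces the \emph{same} generator. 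Granting this bookkeeping, the rest is a formal consequence of the ``under ${\rm THH}({\Bbb S}[t_{2d}])$'' statement and the diagrams already displayed; no further homotopical input is needed.
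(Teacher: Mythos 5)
Your overall strategy is the same as the paper's: reduce to the weight-zero component, argue that under the explicit splittings \eqref{eq:susprepsplit} both $\partial^{\rm rep}$ and $\partial\circ\varphi$ are the projection onto the ${\Bbb S}^{\rm triv}[1]$ summand, and conclude. For the $\partial$ side your argument is essentially the paper's (Lemma~\ref{lem:nulltransspec} plus Corollary~\ref{cor:undcycspspec}).

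The gap is precisely your step~(4), which you flag as ``bookkeeping'' requiring ``no further homotopical input.'' The identification of $\partial^{\rm rep}$ with the projection onto ${\Bbb S}^{\rm triv}[1]$ under the splitting is \emph{not} automatic from the definitions: the residue sequence \eqref{eq:rssres} is established in \cite[Theorem 1.4]{RSS25} for general even ${\Bbb E}_2$-rings, and the equivalence identifying its cofiber with ${\rm THH}(A/x)[1]$ comes from that general construction, not from ``reading off the weight-zero summand.'' You therefore need to verify that the identification used in \cite{RSS25} agrees with the one produced by the splitting \eqref{eq:diagramofgradedcyc}. The paper's proof explicitly invokes \cite[Proposition 9.21]{RSS25} at exactly this point, and this is a genuine input from that paper rather than routine unwinding. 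Your assertion that ``both identifications are pinned down by the same recipe'' is the conclusion one needs, not a trivial observation; without citing or reproving the relevant compatibility from \cite{RSS25}, the argument is incomplete. Once that is supplied, the rest of your reasoning goes through as you describe.
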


\begin{proof} Under the splittings \eqref{eq:susprepsplit}, both are homotopic to the projection onto the factor ${\Bbb S}^{\rm triv}[1]$. For $\partial$ this follows from Lemma \ref{lem:nulltransspec} and Corollary \ref{cor:undcycspspec}, while for $\partial^{\rm rep}$, this follows from combining \cite[Proposition 9.21]{RSS25} with the splitting \eqref{eq:diagramofgradedcyc}. 
\end{proof}

\subsection{Outline of proof of Theorem \ref{thm:mainthmspec}}\label{subsec:proofoutline} We now pause to give an overview of the ingredients that go into the proof of Theorem \ref{thm:mainthmspec}. The overall strategy is to upgrade to equivalence of Corollary \ref{cor:undcycspspec} to one of ${\rm THH}({\Bbb S}[t_{2d}])$-modules, and obtain the equivalence of Theorem \ref{thm:mainthmspec}
by base-change. To realize this strategy, we proceed in the following steps:

\begin{itemize}
\item[(a)] Show that the transfer map is ${\rm THH}({\Bbb S}[t_{2d}])$-linear (Lemma \ref{lem:projectionformula}). The diagram \eqref{eq:t2dnilcofiberseq} may thus be considered one of ${\rm THH}({\Bbb S}[t_{2d}])$-modules in cyclotomic spectra. 
\item[(b)] Show that the underlying map of cyclotomic spectra $f \colon {\rm THH}({\Bbb S}[t_{2d}] | t_{2d}) \to {\rm THH}({\Bbb S}[t_{2d}^{\pm 1}])$ is an inclusion of summands (Proposition \ref{prop:controllingf}). 
\item[(c)] Use (b) to lift the diagram \eqref{eq:t2dnilcofiberseq} to one of ${\rm THH}({\Bbb S}[t_{2d}])$-modules in \emph{graded} cyclotomic spectra (Construction \ref{constr:gradedlift}). A surprisingly subtle point is to show that this recovers the original diagram of ${\rm THH}({\Bbb S}[t_{2d}])$-modules once the graded structure is forgotten (Proposition \ref{prop:mainund}). 
\item[(d)] Combine (b) and (c) to upgrade the equivalence of Corollary \ref{cor:undcycspspec} to an equivalence of graded cyclotomic spectra (Proposition \ref{prop:eqgradedcyc}). This essentially boils down to both maps to ${\rm THH}({\Bbb S}[t_{2d}^{\pm 1}])$ being the inclusion of non-negative weights, which follows from (b). 
\item[(e)] Use the additional control of mapping spectra provided by Lemma \ref{lem:gradedcontrol} in the graded setting to upgrade the identification of the residue maps in Corollary \ref{cor:residuecycl} to ${\rm THH}({\Bbb S}[t_{2d}])$-modules in graded cyclotomic spectra (Corollary \ref{cor:residuegradedspec}). 
\end{itemize}

Once steps (a)-(e) are established, Theorem \ref{thm:mainthmspec} follows by base-change along ${\rm THH}({\Bbb S}[t_{2d}]) \to {\rm THH}(A)$, as explained in Section \ref{subsec:finalizingproof}. 

\subsection{The transfer is a module map} Let $f \colon R \to S$ be a map of ${\Bbb E}_2$-ring spectra, and let $f^* \colon {\rm Perf}(R) \to {\rm Perf}(S)$ denote the extension of scalars functor, given by $f^*(M) = M \otimes_R S$. The category ${\rm Perf}(S)$ is a ${\rm Perf}(R)$-module. Informally, the module structure is induced by the composition \[{\rm Perf}(S) \otimes {\rm Perf}(R) \xrightarrow{{\rm id} \otimes f^*} {\rm Perf}(S) \otimes {\rm Perf}(S) \xrightarrow{\otimes_S} {\rm Perf}(S).\] If the map $f$ moreover exhibits $S$ as an object of ${\rm Perf}(R)$, we obtain an adjunction \[f^* \colon {\rm Perf}(R) \rightleftarrows {\rm Perf}(S) \colon f_*\] with $f^*$ ${\Bbb E}_1$-monoidal and $f_*$ lax ${\Bbb E}_1$-monoidal. 

\begin{lemma}\label{lem:projectionformula} The map $f_* \colon {\rm Perf}(S) \to {\rm Perf}(R)$ is a map of ${\rm Perf}(R)$-modules. 
\end{lemma}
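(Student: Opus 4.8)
The statement is the projection formula in the form ``$f_*$ is a map of $\mathrm{Perf}(R)$-modules'', so the plan is to produce the lax monoidal structure on $f_*$ as a module-map structure, and then observe that being lax monoidal over the base is exactly what is needed. Concretely, $f^* \colon \mathrm{Perf}(R) \to \mathrm{Perf}(S)$ is $\mathbb{E}_1$-monoidal, which makes $\mathrm{Perf}(S)$ into an algebra object in the (monoidal) $\infty$-category of $\mathrm{Perf}(R)$-modules in $\mathrm{Cat}_\infty^{\mathrm{perf}}$; in particular $\mathrm{Perf}(S)$ is a $\mathrm{Perf}(R)$-module via restriction of the module structure along $f^*$, which is precisely the composite $\mathrm{Perf}(S) \otimes \mathrm{Perf}(R) \xrightarrow{\mathrm{id} \otimes f^*} \mathrm{Perf}(S) \otimes \mathrm{Perf}(S) \xrightarrow{\otimes_S} \mathrm{Perf}(S)$ displayed above. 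I would then invoke the general fact that a lax $\mathbb{E}_1$-monoidal right adjoint to a (strong) monoidal functor is automatically a morphism of modules over the source: the right adjoint $f_*$ of the monoidal functor $f^*$ acquires a lax monoidal structure (this is where the hypothesis that $f$ exhibits $S$ as a perfect $R$-module enters, ensuring $f_*$ exists and is a map of small idempotent-complete stable categories), and the lax monoidal structure map $f_*(N) \otimes f^*(M) \to f_*(N \otimes_S f^*(M))$, together with the counit/unit data, assemble $f_*$ into a lax $\mathrm{Perf}(R)$-linear functor, hence a morphism in $\mathrm{Mod}_{\mathrm{Perf}(R)}(\mathrm{Cat}_\infty^{\mathrm{perf}})$.

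The cleanest way to package this is to use the $\infty$-categorical form of the projection formula as in Lurie's \emph{Higher Algebra} (the discussion of modules and adjunctions over a monoidal category), or equivalently the observation that an adjunction $f^* \dashv f_*$ in which $f^*$ is $\mathbb{E}_1$-monoidal is automatically an adjunction \emph{in} $\mathrm{Mod}_{\mathrm{Perf}(R)}(\mathrm{Cat}_\infty^{\mathrm{perf}})$ once we know $f_*$ lands in that category and preserves the relevant structure — the point being that the mate of the monoidal structure on $f^*$ endows $f_*$ with an oplax-to-lax comparison that is invertible by the projection formula. I would spell out that the required projection formula equivalence $f_*(N) \otimes_R M \xrightarrow{\simeq} f_*(N \otimes_S (S \otimes_R M))$ holds for $M \in \mathrm{Perf}(R)$: both sides are exact in $M$ and agree on $M = R$, where it reduces to $f_*(N) \simeq f_*(N)$, so it holds on all of $\mathrm{Perf}(R)$ by the usual thick-subcategory argument. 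This is the substantive input; the rest is formal coherence.

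The main obstacle is not the pointwise identity — that is a one-line thick-subcategory argument — but rather assembling it into a genuinely coherent $\mathrm{Perf}(R)$-linear structure on $f_*$, i.e.\ checking that the lax monoidal structure on $f_*$ obtained by taking mates of the monoidal structure on $f^*$ really does exhibit $f_*$ as a morphism of $\mathrm{Perf}(R)$-modules rather than merely as a functor satisfying the formula objectwise. I expect to dispatch this by citing the relevant coherence statement (e.g.\ that for a monoidal adjunction the right adjoint is canonically a morphism of modules over the monoidal unit category, as recorded in \cite{Lur17} or in the literature on projection formulas for $\infty$-categories), so that no hands-on manipulation of simplices is required.
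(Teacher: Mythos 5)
Your proposal is correct and takes essentially the same approach as the paper. The paper likewise reduces the module-map claim to the projection formula $f_*(M)\otimes_R N \simeq f_*(M\otimes_S f^*N)$ natural in $M, N$ (citing Khan, \emph{K-theory and G-theory of derived algebraic stacks}, Remark~1.11(3) for the fact that this is exactly what is required), constructs the comparison map as the adjoint/mate obtained from the monoidal structure on $f^*$ and the counit, and then proves it is an equivalence by the same thick-subcategory argument: it holds for $N = R$ and the property is closed under cofibers and retracts.
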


\begin{proof} Spelling out the definitions, we ask for an equivalence \begin{equation}\label{eq:comparisonmappf}f_*(M) \otimes_R N \xrightarrow{\simeq} f_*(M \otimes_S f^*(N)),\end{equation} natural in $M \in {\rm Perf}(S)$ and $N \in {\rm Perf}(R)$, cf.\ \cite[Remark 1.11(3)]{Kha22}. The map \eqref{eq:comparisonmappf} is defined to be the adjoint of the composition \[f^*(f_*(M) \otimes_R N) \simeq f^*(f_*(M)) \otimes_S f^*(N) \xrightarrow{\epsilon \otimes {\rm id}} M \otimes_S f^*(N)\] obtained from monoidality and the counit of the adjunction. In other words, \eqref{eq:comparisonmappf} being an equivalence is the projection formula, for which the usual proof goes through: \eqref{eq:comparisonmappf} is an equivalence for $N = R$, and since the condition ``\eqref{eq:comparisonmappf} is an equivalence for $N$'' is closed under cofibers and retracts, it follows that \eqref{eq:comparisonmappf} is an equivalence for all $N \in {\rm Perf}(R)$. 
\end{proof} 

\begin{remark} Despite Lemma \ref{lem:nulltransspec}, it is \emph{not} the case that ${\rm THH}(p_*)$ is null-homotopic as a ${\rm THH}({\Bbb S}[t_{2d}])$-module map. If this had been the case, then one could show that e.g.\ the transfer map ${\rm THH}(p_*) \colon {\rm THH}(A / x) \to {\rm THH}(A)$ is null for any commutative ring $A$ and non-zero divisor $x \in A$. But this is false, as for instance seen by Hesselholt--Madsen's computations \cite[Proof of Theorem 2.4.1]{HM03}. 

The author found it reassuring to keep the following elementary example in mind: For an ordinary commutative ring $R$, the sequence \[0 \to R[t] \xrightarrow{t \cdot} R[t] \xrightarrow{t \mapsto 0} R \to 0\] splits as $R$-modules, but not as $R[t]$-modules. In particular, there are non-zero maps $R \to R[t][1]$ in $D(R[t])$, corresponding to ${\rm Ext}^1_{R[t]}(R, R[t]) \cong R$. These all restrict to the zero map in $D(R)$, since ${\rm Ext}^1_R(R, R[t]) \cong 0$. 
\end{remark}

\subsection{Involving the module structures} Lemma \ref{lem:projectionformula} and symmetric monoidality of ${\rm THH}$ (cf.\ \cite[Remark 6.10]{BGT14}) imply that ${\rm THH}(p_*) \colon {\rm THH}({\Bbb S}) \xrightarrow{} {\rm THH}({\Bbb S}[t_{2d}])$ is one of ${\rm THH}({\Bbb S}[t_{2d}])$-modules in cyclotomic spectra. We consider its cofiber ${\rm THH}({\Bbb S}[t_{2d}] | t_{2d})$ in this category. 

\begin{lemma}\label{lem:localization} There is an equivalence \[{\rm THH}({\Bbb S}[t_{2d}] | t_{2d}) \otimes_{{\rm THH}({\Bbb S}[t_{2d}])} {\rm THH}({\Bbb S}[t_{2d}^{\pm 1}]) \xrightarrow{\simeq} {\rm THH}({\Bbb S}[t_{2d}^{\pm 1}])\] of ${\rm THH}({\Bbb S}[t_{2d}])$-modules in cyclotomic spectra. 
\end{lemma}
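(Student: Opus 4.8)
The plan is to establish the equivalence by exploiting the fact that $\mathrm{THH}(\mathbb{S}[t_{2d}^{\pm 1}])$ is obtained from $\mathrm{THH}(\mathbb{S}[t_{2d}])$ by inverting $t_{2d}$, combined with the localization sequence already set up in \eqref{eq:t2dnilcofiberseq}. The object $\mathrm{THH}(\mathbb{S}[t_{2d}] \mid t_{2d})$ is, by definition, the cofiber of the $\mathrm{THH}(\mathbb{S}[t_{2d}])$-linear transfer map $\mathrm{THH}(p_*) \colon \mathrm{THH}(\mathbb{S}) \to \mathrm{THH}(\mathbb{S}[t_{2d}])$, where linearity is supplied by Lemma \ref{lem:projectionformula} and symmetric monoidality of $\mathrm{THH}$. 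Tensoring the cofiber sequence $\mathrm{THH}(\mathbb{S}) \to \mathrm{THH}(\mathbb{S}[t_{2d}]) \to \mathrm{THH}(\mathbb{S}[t_{2d}] \mid t_{2d})$ over $\mathrm{THH}(\mathbb{S}[t_{2d}])$ with $\mathrm{THH}(\mathbb{S}[t_{2d}^{\pm 1}])$ — which is exact, since $- \otimes_{\mathrm{THH}(\mathbb{S}[t_{2d}])} \mathrm{THH}(\mathbb{S}[t_{2d}^{\pm 1}])$ preserves cofiber sequences of cyclotomic spectra — reduces the claim to showing that the first term $\mathrm{THH}(\mathbb{S}) \otimes_{\mathrm{THH}(\mathbb{S}[t_{2d}])} \mathrm{THH}(\mathbb{S}[t_{2d}^{\pm 1}])$ vanishes.

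To see this vanishing, I would transport the computation to the categorical side. The category $\mathrm{Perf}(\mathbb{S}[t_{2d}^{\pm 1}])$ is the Verdier quotient of $\mathrm{Perf}(\mathbb{S}[t_{2d}])$ by the thick subcategory $\mathrm{Perf}(\mathbb{S}[t_{2d}])^{t_{2d}\text{-}\mathrm{nil}}$ of $t_{2d}$-nilpotent modules, and the object $\mathbb{S} = p_*\mathbb{S}$ (viewed via the cofiber sequence $\mathbb{S}[t_{2d}][2d] \xrightarrow{\cdot t_{2d}} \mathbb{S}[t_{2d}] \to \mathbb{S}$) lies in this subcategory. Since $\mathrm{THH}$ is a localizing invariant, applying it to the Karoubi sequence $\mathrm{Perf}(\mathbb{S}[t_{2d}])^{t_{2d}\text{-}\mathrm{nil}} \to \mathrm{Perf}(\mathbb{S}[t_{2d}]) \to \mathrm{Perf}(\mathbb{S}[t_{2d}^{\pm 1}])$ gives a cofiber sequence; base-changing this along $\mathrm{THH}(\mathbb{S}[t_{2d}]) \to \mathrm{THH}(\mathbb{S}[t_{2d}^{\pm 1}])$ and using that the image of the thick subcategory becomes trivial after localization forces $\mathrm{THH}(\mathbb{S}) \otimes_{\mathrm{THH}(\mathbb{S}[t_{2d}])} \mathrm{THH}(\mathbb{S}[t_{2d}^{\pm 1}]) \simeq 0$. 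Concretely: $\mathbb{S}$ is built from $\mathbb{S}[t_{2d}]$ by one cofiber sequence in which the attaching map is multiplication by $t_{2d}$, which becomes an equivalence after inverting $t_{2d}$; hence $\mathbb{S} \otimes_{\mathbb{S}[t_{2d}]} \mathbb{S}[t_{2d}^{\pm 1}] \simeq 0$, and one checks that $\mathrm{THH}(\mathbb{S}) \otimes_{\mathrm{THH}(\mathbb{S}[t_{2d}])} \mathrm{THH}(\mathbb{S}[t_{2d}^{\pm 1}])$ is governed by the same vanishing — for instance by identifying it with $\mathrm{THH}$ of the zero category, or by a direct cofiber-sequence argument using the cyclotomic splitting \eqref{eq:diagramofgradedcycsplit}.

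With the first term shown to vanish, the cofiber sequence collapses to the asserted equivalence $\mathrm{THH}(\mathbb{S}[t_{2d}] \mid t_{2d}) \otimes_{\mathrm{THH}(\mathbb{S}[t_{2d}])} \mathrm{THH}(\mathbb{S}[t_{2d}^{\pm 1}]) \xrightarrow{\simeq} \mathrm{THH}(\mathbb{S}[t_{2d}^{\pm 1}])$, and it is manifestly a map of $\mathrm{THH}(\mathbb{S}[t_{2d}])$-modules since every construction involved (the transfer map, its cofiber, and base-change) lives in $\mathrm{Mod}_{\mathrm{THH}(\mathbb{S}[t_{2d}])}$ by Lemma \ref{lem:projectionformula}. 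The main obstacle I anticipate is the second step: one must be careful that the vanishing $\mathbb{S} \otimes_{\mathbb{S}[t_{2d}]} \mathbb{S}[t_{2d}^{\pm 1}] \simeq 0$ at the level of module spectra genuinely propagates to the relative tensor product of the (cyclotomic) $\mathrm{THH}$-spectra, rather than merely giving the vanishing of underlying spectra — this is where one wants to invoke that $\mathrm{THH}(\mathbb{S}[t_{2d}^{\pm 1}]) = \mathrm{THH}(\mathbb{S}[t_{2d}])[t_{2d}^{-1}]$ as a localization of $\mathrm{THH}(\mathbb{S}[t_{2d}])$-algebras in cyclotomic spectra (visible weightwise from \eqref{eq:diagramofgradedcycsplit}), so that base-change along it is a Bousfield localization and kills precisely the $t_{2d}$-power-torsion modules.
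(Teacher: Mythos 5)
Your proposal follows exactly the paper's strategy: base-change the defining cofiber sequence ${\rm THH}({\Bbb S}) \to {\rm THH}({\Bbb S}[t_{2d}]) \to {\rm THH}({\Bbb S}[t_{2d}]|t_{2d})$ of ${\rm THH}({\Bbb S}[t_{2d}])$-modules along ${\rm THH}({\Bbb S}[t_{2d}]) \to {\rm THH}({\Bbb S}[t_{2d}^{\pm 1}])$ and show the resulting fiber term vanishes. The ``main obstacle'' you anticipate in the last paragraph is resolved more directly than you suggest: symmetric monoidality of ${\rm THH}$ gives ${\rm THH}({\Bbb S}) \otimes_{{\rm THH}({\Bbb S}[t_{2d}])} {\rm THH}({\Bbb S}[t_{2d}^{\pm 1}]) \simeq {\rm THH}({\Bbb S} \otimes_{{\Bbb S}[t_{2d}]} {\Bbb S}[t_{2d}^{\pm 1}]) \simeq {\rm THH}(0) \simeq 0$ in one step, with no need for the Karoubi-sequence detour or the Bousfield-localization framing.
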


\begin{proof} The defining cofiber sequence \[{\rm THH}({\Bbb S}) \xrightarrow{{\rm THH}(p_*)} {\rm THH}({\Bbb S}[t_{2d}]) \xrightarrow{} {\rm THH}({\Bbb S}[t_{2d}] | t_{2d})\] is one of ${\rm THH}({\Bbb S}[t_{2d}])$-modules by Lemma \ref{lem:projectionformula}. Base-changing along ${\rm THH}({\Bbb S}[t_{2d}]) \xrightarrow{} {\rm THH}({\Bbb S}[t_{2d}^{\pm 1}])$, we obtain the cofiber sequence \[\begin{tikzcd}[row sep = small]{\rm THH}({\Bbb S}) \otimes_{{\rm THH}({\Bbb S}[t_{2d}])} {\rm THH}({\Bbb S}[t_{2d}^{\pm 1}])\ar{d}{{\rm THH}(p_*) \otimes {\rm id}} \\  {\rm THH}({\Bbb S}[t_{2d}]) \otimes_{{\rm THH}({\Bbb S}[t_{2d}])} {\rm THH}({\Bbb S}[t_{2d}^{\pm 1}]) \ar{d} \\  {\rm THH}({\Bbb S}[t_{2d}] | t_{2d}) \otimes_{{\rm THH}({\Bbb S}[t_{2d}])} {\rm THH}({\Bbb S}[t_{2d}^{\pm 1}]).\end{tikzcd}\] The fiber term is null, since ${\rm THH}$ is symmetric monoidal and ${\Bbb S} \otimes_{{\Bbb S}[t_{2d}]} {\Bbb S}[t_{2d}^{\pm 1}] \simeq 0$. 
\end{proof}

We think of the following result as a spectral realization of the logarithmic derivative relation $t \cdot {\rm dlog}(t) = dt$ defining the log differentials. 

\begin{lemma}\label{lem:t2dfactor} Multiplication by $t_{2d}$ on ${\rm THH}({\Bbb S}[t_{2d}] | t_{2d})$ participates in a factorization \[\begin{tikzcd}{\rm THH}({\Bbb S}[t_{2d}] | t_{2d})[2d] \ar{r}{\cdot t_{2d}} \ar[dashed]{d}{\widetilde{t_{2d}}} &  {\rm THH}({\Bbb S}[t_{2d}] | t_{2d}) \\ {\rm THH}({\Bbb S}[t_{2d}]) \ar{ur} \end{tikzcd}\] of spectra, with diagonal map the canonical map to the cofiber.  
\end{lemma}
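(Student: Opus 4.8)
The plan is to deduce the factorization formally from the single fact that multiplication by $t_{2d}$ acts by zero on ${\rm THH}({\Bbb S})$; morally this encodes the relation $t\cdot{\rm dlog}(t)=dt$ precisely because $t_{2d}$ annihilates the residue term ${\rm THH}({\Bbb S})$ that splits off logarithmic ${\rm THH}$. Throughout I will forget cyclotomic structures and argue in spectra, retaining only the ${\rm THH}({\Bbb S}[t_{2d}])$-module structure (which is all the statement requires). Recall that ${\rm THH}({\Bbb S}[t_{2d}] | t_{2d})$ is, by construction, the cofiber taken in ${\rm THH}({\Bbb S}[t_{2d}])$-modules of the transfer ${\rm THH}(p_*)\colon {\rm THH}({\Bbb S})\to {\rm THH}({\Bbb S}[t_{2d}])$, which is ${\rm THH}({\Bbb S}[t_{2d}])$-linear by Lemma \ref{lem:projectionformula}. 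Write $c\colon {\rm THH}({\Bbb S}[t_{2d}])\to {\rm THH}({\Bbb S}[t_{2d}] | t_{2d})$ for the canonical map to the cofiber (this is the diagonal map in the statement) and $\delta\colon {\rm THH}({\Bbb S}[t_{2d}] | t_{2d})\to {\rm THH}({\Bbb S})[1]$ for the connecting map, so that $c$ is the fiber of $\delta$; both are ${\rm THH}({\Bbb S}[t_{2d}])$-linear.

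Next I would record that multiplication by $t_{2d}$ on ${\rm THH}({\Bbb S})$ is null-homotopic. The ${\rm THH}({\Bbb S}[t_{2d}])$-module structure on ${\rm THH}({\Bbb S})$ is restricted along the ring map ${\rm THH}({\Bbb S}[t_{2d}])\to {\rm THH}({\Bbb S})$ induced by $p\colon {\Bbb S}[t_{2d}]\to {\Bbb S}$ (cf.\ the discussion preceding Lemma \ref{lem:projectionformula}, applied with $f=p$). Since $p$ is the cofiber of multiplication by $t_{2d}$ on ${\Bbb S}[t_{2d}]$, it sends $t_{2d}$ to $0$; by naturality of the unit map $R\to {\rm THH}(R)$, the image of $t_{2d}$ in $\pi_{2d}{\rm THH}({\Bbb S})$ is therefore $0$, i.e.\ the map ${\Bbb S}[2d]\to {\rm THH}({\Bbb S})$ representing it is null-homotopic. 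As multiplication by $t_{2d}$ on ${\rm THH}({\Bbb S})$ is the module action precomposed with this map, it is null-homotopic.

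Finally I would run the diagram chase. Applying multiplication by $t_{2d}$ to the defining cofiber sequence ${\rm THH}({\Bbb S})\to {\rm THH}({\Bbb S}[t_{2d}])\xrightarrow{c}{\rm THH}({\Bbb S}[t_{2d}] | t_{2d})\xrightarrow{\delta}{\rm THH}({\Bbb S})[1]$, and using that left multiplication by $t_{2d}$ is natural with respect to ${\rm THH}({\Bbb S}[t_{2d}])$-linear maps (it is the action precomposed with a fixed map of spectra), one obtains a commuting ladder whose rightmost square is
\[\begin{tikzcd}
{\rm THH}({\Bbb S}[t_{2d}] | t_{2d})[2d] \ar{r}{\delta[2d]} \ar[swap]{d}{\cdot t_{2d}} & {\rm THH}({\Bbb S})[2d+1] \ar{d}{\cdot t_{2d}} \\
{\rm THH}({\Bbb S}[t_{2d}] | t_{2d}) \ar{r}{\delta} & {\rm THH}({\Bbb S})[1].
\end{tikzcd}\]
Its right-hand vertical is a shift of multiplication by $t_{2d}$ on ${\rm THH}({\Bbb S})$, hence null, so $\delta\circ(\cdot t_{2d})\simeq 0$. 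Because $c$ is the fiber of $\delta$, this nullity yields a lift $\widetilde{t_{2d}}\colon {\rm THH}({\Bbb S}[t_{2d}] | t_{2d})[2d]\to {\rm THH}({\Bbb S}[t_{2d}])$ with $c\circ\widetilde{t_{2d}}\simeq(\cdot t_{2d})$; the lift is non-canonical, but the statement only asks for its existence.

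The argument is essentially formal once one knows that ${\rm THH}(p_*)$, $c$, and $\delta$ are ${\rm THH}({\Bbb S}[t_{2d}])$-linear. The only step that warrants a moment's thought is the identification in the second paragraph of the ${\rm THH}({\Bbb S}[t_{2d}])$-module structure on ${\rm THH}({\Bbb S})$ with the one pulled back along $p$, together with $p(t_{2d})=0$; both are straightforward, so I do not anticipate a genuine obstacle.
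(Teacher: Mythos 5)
Your proof is correct, and it uses the same key ingredients as the paper's: $\THH(\mathbb{S}[t_{2d}])$-linearity of the transfer (Lemma \ref{lem:projectionformula}) and the nullity of $\cdot t_{2d}$ on $\THH(\mathbb{S})$ via the ring map $p\colon \mathbb{S}[t_{2d}] \to \mathbb{S}$ killing $t_{2d}$. The formal step is a mild rotation of the paper's: you lift $\cdot t_{2d}$ on $\THH(\mathbb{S}[t_{2d}]\mid t_{2d})$ through the fiber $c$ of the connecting map $\delta$, using that $\delta \circ (\cdot t_{2d}) \simeq (\cdot t_{2d})\circ\delta[2d] \simeq 0$, whereas the paper factors $\cdot t_{2d}$ on $\THH(\mathbb{S}[t_{2d}])$ through the cofiber $\THH(\mathbb{S}[t_{2d}]\mid t_{2d})[2d]$ of $\THH(p_*)[2d]$. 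Your version is marginally more economical in that the lift through the fiber of $\delta$ immediately yields the commutativity $c \circ \widetilde{t_{2d}} \simeq \cdot t_{2d}$ asserted in the statement, whereas the paper's factorization produces the other triangle (involving $c[2d]$) directly and leaves the compatibility with the asserted triangle as an implicit diagram chase; both arguments are standard and correct.
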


\begin{proof} By Lemma \ref{lem:projectionformula}, the cofiber sequence defining ${\rm THH}({\Bbb S}[t_{2d}] | t_{2d})$ is one of ${\rm THH}({\Bbb S}[t_{2d}])$-modules, which we consider one of ${\Bbb S}[t_{2d}]$-modules by restriction of scalars. Multiplication by $t_{2d}$ induces a solid commutative diagram \[\begin{tikzcd}{\rm THH}({\Bbb S})[2d] \ar{rr}{{\rm THH}(p_*)[2d]} \ar{d}{{\cdot t_{2d}}} & & {\rm THH}({\Bbb S}[t_{2d}])[2d] \ar{r} \ar{d}{\cdot t_{2d}} & {\rm THH}({\Bbb S}[t_{2d}] | t_{2d})[2d] \ar{d}{\cdot t_{2d}} \ar[swap,dashed]{dl}{\widetilde{t_{2d}}} \\ {\rm THH}({\Bbb S}) \ar{rr}{{\rm THH}(p_*)} & & {\rm THH}({\Bbb S}[t_{2d}]) \ar{r} & {\rm THH}({\Bbb S}[t_{2d}] | t_{2d})  \end{tikzcd}\] of horizontal cofiber sequences of spectra. Multiplication by $t_{2d}$ on ${\rm THH}({\Bbb S}) \cong {\Bbb S}$ is null-homotopic, since ${\Bbb S} \simeq {\rm cof}({\Bbb S}[t_{2d}][2d] \xrightarrow{\cdot t_{2d}} {\Bbb S}[t_{2d}])$ is a ring. This implies that the composition ${\rm THH}({\Bbb S})[2d] \xrightarrow{{\rm THH}(p_*)[2d]} {\rm THH}({\Bbb S}[t_{2d}])[2d] \xrightarrow{\cdot t_{2d}} {\rm THH}({\Bbb S}[t_{2d}])$ is null-homotopic, providing the indicated dashed arrow from the cofiber. 
\end{proof}

\subsection{Controlling the map $f \colon {\rm THH}({\Bbb S}[t_{2d}] | t_{2d}) \xrightarrow{} {\rm THH}({\Bbb S}[t_{2d}^{\pm 1}])$} We now aim to explicitly describe the right-hand vertical map in \eqref{eq:t2dnilcofiberseq}:

\begin{proposition}\label{prop:controllingf} The map $f \colon {\rm THH}({\Bbb S}[t_{2d}] | t_{2d}) \xrightarrow{} {\rm THH}({\Bbb S}[t_{2d}^{\pm 1}])$ is equivalent to the inclusion of summands \[{\Bbb S}^{\rm triv} \oplus {\Bbb S}^{\rm triv}[1] \oplus {\rm THH}({\Bbb S}[t_{2d}^{\pm 1}])_{> 0} \xrightarrow{} {\Bbb S}^{\rm triv} \oplus {\Bbb S}^{\rm triv}[1] \oplus {\rm THH}({\Bbb S}[t_{2d}^{\pm 1}])_{> 0} \oplus {\rm THH}({\Bbb S}[t_{2d}^{\pm 1}])_{< 0}\] of cyclotomic spectra. 
\end{proposition}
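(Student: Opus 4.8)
The plan is to compute $f$ weight by weight. First I would observe that all of \eqref{eq:t2dnilcofiberseq} is a diagram of \emph{graded} cyclotomic spectra: every object is $\mathrm{THH}$ of a graded $\mathbb{E}_1$-ring — for the $t_{2d}$-nilpotent term one invokes Morita invariance to write $\mathrm{THH}(\mathrm{Perf}(\mathbb{S}[t_{2d}])^{t_{2d}-\mathrm{nil}}) \simeq \mathrm{THH}(\mathrm{End}_{\mathbb{S}[t_{2d}]}(N))$ with $N := \mathbb{S}[t_{2d}]/t_{2d}$ — or a cofiber of such, and every map is $\mathrm{THH}$ of a graded functor; this uses only the plain graded structure, not the more delicate graded module structure of Construction \ref{constr:gradedlift}. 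I would then record the weight data: $\mathrm{THH}(\mathbb{S})$ is $\mathbb{S}^{\mathrm{triv}}$ in weight $0$ and trivial elsewhere; $\mathrm{THH}(\mathbb{S}[t_{2d}])$ and $\mathrm{THH}(\mathbb{S}[t_{2d}^{\pm 1}])$ have the decompositions of \eqref{eq:diagramofgradedcycsplit}; and a Koszul resolution computes $\mathrm{End}_{\mathbb{S}[t_{2d}]}(N)$ to be concentrated in weights $\{0,-1\}$ with weight-zero part $\mathbb{S}$, so $\mathrm{THH}(\mathrm{Perf}(\mathbb{S}[t_{2d}])^{t_{2d}-\mathrm{nil}})$ lives in non-positive weights with weight-zero part $\mathrm{THH}(\mathbb{S}) = \mathbb{S}^{\mathrm{triv}}$ (taking weight-zero parts commutes with $\mathrm{THH}$ for rings in non-positive weights), recovering \eqref{eq:t2dnilsplit}.

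In weights $j>0$ the left-hand term of each row of \eqref{eq:t2dnilcofiberseq} vanishes, so both maps out of $\mathrm{THH}(\mathbb{S}[t_{2d}])_j$ are equivalences and $f_j$ is forced to be the identity of $\mathrm{THH}(\mathbb{S}[t_{2d}^{\pm 1}])_j$; in weights $j<0$ the source $\mathrm{THH}(\mathbb{S}[t_{2d}]|t_{2d})_j$ vanishes, so $f_j$ is the zero map into $\mathrm{THH}(\mathbb{S}[t_{2d}^{\pm 1}])_j$. Everything thus reduces to weight $0$, where \eqref{eq:t2dnilcofiberseq} restricts to a map of cofiber sequences whose rows each read $\mathbb{S}^{\mathrm{triv}} \to \mathbb{S}^{\mathrm{triv}} \to \mathbb{S}^{\mathrm{triv}} \oplus \mathbb{S}^{\mathrm{triv}}[1]$, with the identity in the middle, left-hand vertical $a_0 := \mathrm{THH}(p_*^{t_{2d}-\mathrm{nil}})_0$, and right-hand vertical $f_0$. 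Both left-hand horizontal maps are null — for the bottom by Lemma \ref{lem:nulltransspec}, for the top because the weight-zero localization map $\mathbb{S}^{\mathrm{triv}} \to \mathbb{S}^{\mathrm{triv}} \oplus \mathbb{S}^{\mathrm{triv}}[1]$ is a split inclusion — so both rows split, and a short diagram chase (the map $f_0$ restricts to the split inclusion on the $\mathrm{THH}(\mathbb{S}[t_{2d}])_0$-summand, and induces $a_0[1]$ on the cofibers $\mathbb{S}^{\mathrm{triv}}[1]$) presents $f_0$, in the splittings $\mathbb{S}^{\mathrm{triv}} \oplus \mathbb{S}^{\mathrm{triv}}[1]$ of source and target, as an upper-triangular endomorphism with diagonal entries $\mathrm{id}_{\mathbb{S}^{\mathrm{triv}}}$ and $a_0[1]$. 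So $f_0$ is an equivalence as soon as $a_0$ is.

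It then remains to identify $a_0$, which is the heart of the argument. Under $\mathrm{Perf}(\mathbb{S}[t_{2d}])^{t_{2d}-\mathrm{nil}} \simeq \mathrm{Perf}(\mathrm{End}_{\mathbb{S}[t_{2d}]}(N))$ the functor $p_*^{t_{2d}-\mathrm{nil}}$ carries the generator $\mathbb{S}$ of $\mathrm{Perf}(\mathbb{S})$ to the free rank-one module, hence is extension of scalars along the unit $\mathbb{S} \to \mathrm{End}_{\mathbb{S}[t_{2d}]}(N)$; therefore $\mathrm{THH}(p_*^{t_{2d}-\mathrm{nil}})$ is the unit map of the $\mathbb{E}_1$-ring $\mathrm{THH}(\mathrm{End}_{\mathbb{S}[t_{2d}]}(N))$, and its weight-zero component is the unit $\mathbb{S} = \mathrm{THH}(\mathbb{S}) \to \mathrm{THH}(\mathrm{End}_{\mathbb{S}[t_{2d}]}(N))_0 = \mathrm{THH}(\mathbb{S}) = \mathbb{S}$ of a ring, i.e.\ the identity. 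Hence $a_0$, and with it $f_0$, is an equivalence, and assembling the weight-$j$ pieces exhibits $f$ as an equivalence onto the summands of non-negative weight, which after \eqref{eq:diagramofgradedcycsplit} is exactly the asserted inclusion; note $f_0$ need only be \emph{an} equivalence, since an automorphism of the weight-zero summand of the target absorbs the off-diagonal term. I expect the fiddliest part to be the weight-zero bookkeeping — tracking the two copies of $\mathbb{S}^{\mathrm{triv}}[1]$ coming from $B^{\mathrm{rep}}_{\{0\}}(\langle t\rangle)$ and $B^{\mathrm{cyc}}_{\{0\}}(\langle t\rangle^{\mathrm{gp}})$ and checking that the splittings of the two cofiber sequences are compatible — while the conceptual input is the identification of $\mathrm{THH}(p_*^{t_{2d}-\mathrm{nil}})$ with a unit map.
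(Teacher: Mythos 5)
Your overall strategy — reduce everything to weight zero and recognize the transfer there as a unit map — is genuinely different from the paper's proof, and would in fact yield a stronger conclusion (a graded identification of $f$, which the paper only arrives at later). But the very first step is where the difficulty is hiding, and your proposal asserts it rather than proves it.

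You write that ``all of \eqref{eq:t2dnilcofiberseq} is a diagram of graded cyclotomic spectra'' because ``every map is $\mathrm{THH}$ of a graded functor,'' invoking the Morita equivalence $\mathrm{Perf}(\mathbb{S}[t_{2d}])^{t_{2d}-\mathrm{nil}} \simeq \mathrm{Perf}(\mathrm{End}_{\mathbb{S}[t_{2d}]}(N))$ to treat $\mathrm{THH}(p_*^{t_{2d}-\mathrm{nil}})$ as $\mathrm{THH}$ of a graded unit ring map. The difficulty is that the graded cyclotomic structure used elsewhere in your argument (and in \eqref{eq:t2dnilsplit}, Proposition \ref{prop:gradedlift}, etc.) on $\mathrm{THH}(\mathrm{Perf}(\mathbb{S}[t_{2d}])^{t_{2d}-\mathrm{nil}})$ is the one inherited as the \emph{fiber} of the graded map $\mathrm{THH}(\mathbb{S}[t_{2d}]) \to \mathrm{THH}(\mathbb{S}[t_{2d}^{\pm 1}])$, while the one you want to use to compute the weight-zero component of the transfer comes from the graded $\mathbb{E}_1$-ring $\mathrm{End}_{\mathbb{S}[t_{2d}]}(N)$. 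The Schwede--Shipley/Antieau--Barthel--Gepner identification is a priori an equivalence of stable $\infty$-categories (so of underlying cyclotomic spectra after applying $\mathrm{THH}$), and checking that it intertwines these two gradings, compatibly with the transfer functor, is exactly the kind of ``the abstract diagram lifts to the graded world and recovers the original one'' point that the paper flags as surprisingly subtle and devotes Construction \ref{constr:gradedlift} and Proposition \ref{prop:mainund} to — work that is logically \emph{downstream} of Proposition \ref{prop:controllingf} in the paper, because it uses this proposition (through Lemma \ref{lem:transretrspec}) as input. So your route would not be circular in principle, but the gradedness claim is doing all the work and is left unjustified.

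The paper's proof deliberately stays at the level of ungraded cyclotomic spectra to sidestep this: from Lemma \ref{lem:transretrspec} it knows only that $\mathrm{THH}(p_*^{t_{2d}-\mathrm{nil}})\colon \mathbb{S}^{\mathrm{triv}} \to \mathbb{S}^{\mathrm{triv}} \oplus \mathrm{THH}(\mathbb{S}[t_{2d}^{\pm 1}])_{<0}[-1]$ admits a retraction, and then pins down the image as the first summand by observing that $\mathrm{THH}(\mathbb{S}[t_{2d}^{\pm 1}])_{<0}[-1]$ cannot contain an $\mathbb{S}^{\mathrm{triv}}$ cyclotomic summand because the $p$-typical Frobenius multiplies weights by $p$. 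That weight argument uses the grading on the \emph{target} only, where it is unambiguous, and avoids the need to know that the transfer is a graded map. If you want to rescue your approach, you would need to prove the graded compatibility of the Morita equivalence and transfer directly; with that lemma in hand your weight-by-weight computation, including the identification of $a_0$ with the identity via the unit of $\mathrm{End}_{\mathbb{S}[t_{2d}]}(N)$ in weight zero, is clean and would in fact shortcut some of the later graded lifting. The weight bookkeeping in your weight-$j$ ($j\neq 0$) and weight-$0$ steps is fine as stated.
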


For ease of reference, we first record:

\begin{lemma}\label{lem:transretrspec} The left-hand vertical map \[{\rm THH}(p_*^{t_{2d} - {\rm nil}}) \colon {\rm THH}({\rm Perf}({\Bbb S})) \xrightarrow{} {\rm THH}({\rm Perf}({\Bbb S}[t_{2d}])^{t_{2d} - {\rm nil}})\] of \eqref{eq:t2dnilcofiberseq} admits a retraction in cyclotomic spectra. 
\end{lemma}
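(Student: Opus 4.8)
The plan is to recognise ${\rm THH}(p_*^{t_{2d} - {\rm nil}})$ as the map induced on ${\rm THH}$ by the unit of an augmented ${\Bbb E}_1$-ring, and then to retract along the augmentation.

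First I would observe that ${\rm Perf}({\Bbb S}[t_{2d}])^{t_{2d} - {\rm nil}}$ is the thick subcategory of ${\rm Perf}({\Bbb S}[t_{2d}])$ generated by the single object ${\Bbb S} \simeq {\Bbb S}[t_{2d}]/t_{2d}$. Indeed, if $M$ is a perfect ${\Bbb S}[t_{2d}]$-module with $M[t_{2d}^{-1}] \simeq 0$, then since $M[t_{2d}^{-1}] \simeq \colim\bigl(M \xrightarrow{t_{2d}} M \xrightarrow{t_{2d}} \cdots\bigr)$ and $M$ is compact, some power $t_{2d}^N \colon M \to M$ is null; thus $M$ is a retract of $M \otimes_{{\Bbb S}[t_{2d}]} {\Bbb S}[t_{2d}]/t_{2d}^N$, which lies in the thick subcategory generated by ${\Bbb S}[t_{2d}]/t_{2d}$, because $M$ is perfect and ${\Bbb S}[t_{2d}]/t_{2d}^N$ carries a finite filtration with subquotients ${\Bbb S}[t_{2d}]/t_{2d}$. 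Writing $E := {\rm End}_{{\Bbb S}[t_{2d}]}({\Bbb S}[t_{2d}]/t_{2d})$ for the endomorphism ${\Bbb E}_1$-ring of this compact generator, Morita theory then supplies an equivalence ${\rm Perf}({\Bbb S}[t_{2d}])^{t_{2d} - {\rm nil}} \simeq {\rm Perf}(E)$ carrying $p_*^{t_{2d} - {\rm nil}}$ --- which sends the generator ${\Bbb S}$ of ${\rm Perf}({\Bbb S})$ to ${\Bbb S}[t_{2d}]/t_{2d}$ --- to extension of scalars along the unit $u \colon {\Bbb S} \to E$. Since ${\rm THH}$ of a map of ${\Bbb E}_1$-rings agrees with ${\rm THH}$ of the induced functor on perfect modules, this identifies ${\rm THH}(p_*^{t_{2d} - {\rm nil}})$ with ${\rm THH}(u)$ as maps of cyclotomic spectra.

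Next I would show that $E$ is augmented, i.e.\ that there is a map $\epsilon \colon E \to {\Bbb S}$ of ${\Bbb E}_1$-rings with $\epsilon \circ u \simeq {\rm id}$. Granting this, ${\rm THH}(\epsilon) \circ {\rm THH}(u) \simeq {\rm THH}({\rm id}) \simeq {\rm id}$, and transporting ${\rm THH}(\epsilon)$ back across the Morita equivalence yields the desired retraction of ${\rm THH}(p_*^{t_{2d} - {\rm nil}})$ in cyclotomic spectra. To produce $\epsilon$ I would use the weight grading. The graded ${\Bbb E}_1$-ring ${\Bbb S}[t_{2d}]$ is concentrated in non-negative weights, and ${\Bbb S}[t_{2d}]/t_{2d}$ is a graded ${\Bbb S}[t_{2d}]$-module concentrated in weight $0$, resolved by the two-term complex given by multiplication by $t_{2d}$, whose two free summands are copies of ${\Bbb S}[t_{2d}]$ placed in weights $1$ and $0$. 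Computing $E = {\rm End}_{{\Bbb S}[t_{2d}]}({\Bbb S}[t_{2d}]/t_{2d})$ from this resolution exhibits it as a graded ${\Bbb E}_1$-ring concentrated in non-positive weights with weight-zero part $E_0 \simeq {\Bbb S}$ (additively $E \simeq {\Bbb S} \oplus {\Bbb S}[-2d-1]$, the latter summand in weight $-1$ --- the expected ${\Bbb S}$-linear Koszul dual exterior algebra). Being concentrated in weights $\le 0$, the strictly negative part of $E$ is a two-sided ideal, so the quotient $\epsilon \colon E \to E_0 \simeq {\Bbb S}$ is a map of graded ${\Bbb E}_1$-rings; and since $u$ lands in weight zero, $\epsilon \circ u$ is the unit of $E_0 \simeq {\Bbb S}$, hence the identity.

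The crux is the existence of $\epsilon$: for a non-connective ${\Bbb E}_1$-ring spectrum an augmentation need not exist, and the point of the weight grading is precisely to render $E$ ``co-connective'' --- concentrated in weights $\le 0$ with weight-zero part exactly ${\Bbb S}$ --- which then furnishes $\epsilon$ for free. The other ingredients (the thick-generation statement, the Morita identification of $p_*^{t_{2d} - {\rm nil}}$ with extension of scalars along $u$, and the compatibility between ${\rm THH}$ of a ring map and of the induced functor on perfect modules) are routine; the last of these should be stated with some care, so that the retraction is obtained in cyclotomic spectra and not merely in spectra.
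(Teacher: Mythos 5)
Your argument is correct, and it shares the overall shape of the paper's proof: pass through Morita theory to identify ${\rm Perf}({\Bbb S}[t_{2d}])^{t_{2d}-{\rm nil}} \simeq {\rm Perf}(E)$ with $E = {\rm End}_{{\Bbb S}[t_{2d}]}({\Bbb S})$ (the paper cites \cite[Theorem 3.3]{SS03} and \cite[Theorem 1.11]{ABG18} for this rather than arguing thick-generation by hand as you do), recognize $p_*^{t_{2d}-{\rm nil}}$ as the functor on perfect modules induced by the unit $u \colon {\Bbb S} \to E$, and then retract $u$ as a map of ${\Bbb E}_1$-rings. Where you genuinely diverge is in producing the retraction $\epsilon \colon E \to {\Bbb S}$. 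The paper's $\epsilon$ is the ${\Bbb E}_1$-map ${\rm End}_{{\Bbb S}[t_{2d}]}({\Bbb S}) \to {\rm End}_{{\Bbb S}}({\Bbb S}) \simeq {\Bbb S}$ given by restriction of scalars along the unit ${\Bbb S} \to {\Bbb S}[t_{2d}]$; one then has $\epsilon \circ u \simeq {\rm id}$ because the composite of restriction along ${\Bbb S}[t_{2d}] \xrightarrow{p} {\Bbb S}$ and then along ${\Bbb S} \to {\Bbb S}[t_{2d}]$ is restriction along the identity of ${\Bbb S}$. This is shorter and needs no computation of $E$ at all. Your $\epsilon$ instead uses the weight grading, showing $E$ is concentrated in weights $\le 0$ with $E_0 \simeq {\Bbb S}$ and collapsing the strictly negative weights; this mirrors exactly the mechanism the paper uses to construct $p \colon {\Bbb S}[t_{2d}] \to {\Bbb S}$ itself (cf.\ \cite[Lemma B.0.6]{HW22}), so it sits naturally within the graded framework of the section, at the cost of a small Koszul computation to verify the weight bounds on $E$. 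Both produce a retraction at the level of ${\Bbb E}_1$-rings and hence, after passing to perfect modules and applying ${\rm THH}$, a retraction of cyclotomic spectra.
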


\begin{proof} By Schwede--Shipley \cite[Theorem 3.3]{SS03} and Antieau--Barthel--Gepner \cite[Theorem 1.11]{ABG18}, there is an equivalence \[{\rm Perf}({\Bbb S}[t_{2d}])^{t_{2d} - {\rm nil}} \simeq {\rm Perf}({\rm End}_{{\Bbb S}[t_{2d}]}({\Bbb S}))\] under which the transfer ${\rm Perf}({\Bbb S}) \to {\rm Perf}({\Bbb S}[t_{2d}])^{t_{2d} - {\rm nil}}$ is induced by the ${\Bbb E}_1$-ring map ${\Bbb S} \cong {\rm End}_{{\Bbb S}}({\Bbb S}) \to {\rm End}_{{\Bbb S}[t_{2d}]}({\Bbb S})$ given by restriction of scalars along ${\Bbb S}[t_{2d}] \xrightarrow{p} {\Bbb S}$. This ring map admits a retraction, by further restricting scalars along ${\Bbb S} \to {\Bbb S}[t_{2d}]$, giving a retraction ${\rm THH}({\Bbb S}) \xrightarrow{} {\rm THH}({\rm End}_{{\Bbb S}[t_{2d}]}({\Bbb S})) \xrightarrow{} {\rm THH}({\Bbb S})$ in cyclotomic spectra. 
\end{proof}

\begin{proof}[Proof of Proposition \ref{prop:controllingf}] By Lemma \ref{lem:transretrspec}, the map ${\rm THH}(p_*^{t_{2d} - {\rm nil}})$ splits off a cyclotomic summand in ${\rm THH}({\rm Perf}({\Bbb S}[t_{2d}])^{t_{2d} - {\rm nil}}) \simeq {\Bbb S}^{\rm triv} \oplus {\rm THH}({\Bbb S}[t_{2d}^{\pm 1}])_{< 0}[-1]$, cf.\  \eqref{eq:t2dnilsplit}. This must necessarily be the first summand, as ${\rm THH}({\Bbb S}[t_{2d}^{\pm 1}])_{< 0}[-1]$ does not contain any cyclotomic summands equivalent to ${\Bbb S}^{\rm triv}$, since the $p$-typical cyclotomic Frobenius multiplies weights by $p$. Thus the map $f$ is also a split injection, as the left-hand horizontal maps in \eqref{eq:t2dnilcofiberseq} are null-homotopic. 
\end{proof}

\begin{remark} One can also see directly that $f$ does not map non-trivially to the negative weights of ${\rm THH}({\Bbb S}[t_{2d}^{\pm 1}])$, for example by using Lemma \ref{lem:t2dfactor} to first deduce that $f$ factors through the subobject ${\rm THH}({\Bbb S}[t_{2d}^{\pm 1}])_{\ge - 1}$. Indeed, the diagram \begin{equation}\label{eq:t2dfactordiagramremark}\begin{tikzcd}{\rm THH}({\Bbb S}[t_{2d}] | t_{2d})[2d] \ar{r}{\widetilde{t_{2d}}} \ar{d}{f[2d]} & {\rm THH}({\Bbb S}[t_{2d}]) \ar{d} \\ {\rm THH}({\Bbb S}[t_{2d}^{\pm 1}])[2d] \ar{r}{\cdot t_{2d}} & {\rm THH}({\Bbb S}[t_{2d}^{\pm 1}])\end{tikzcd}\end{equation} commutes and multiplication by $t_{2d}$ increases weight by $1$. One then excludes the possibility of a non-zero map ${\Bbb S}^{\rm triv} \oplus {\Bbb S}^{\rm triv}[1] \to {\rm THH}({\Bbb S}[t_{2d}^{\pm 1}])_{\{-1\}}$ participating in a ${\rm THH}({\Bbb S}[t_{2d}])$-module map of cyclotomic spectra by hand, using the analysis provided in \cite[Section 11]{RSS25}. 
\end{remark}

\subsection{Involving the graded structure} We now aim to upgrade the equivalence of Corollary \ref{cor:undcycspspec} to one which respects ${\rm THH}({\Bbb S}[t_{2d}])$-module structures. For this, we rely on the following observation: 

\begin{proposition}\label{prop:gradedlift} The canonical morphism \[\begin{tikzcd}[row sep = small]{\rm map}_{{\rm Mod}_{{\rm THH}({\Bbb S}[t_{2d}])}({\rm grCycSp})}({\rm THH}({\rm Perf}({\Bbb S})), {\rm THH}({\rm Perf}({\Bbb S}[t_{2d}])^{t_{2d} - {\rm nil}})) \ar{d}{\simeq} \\ {\rm map}_{{\rm grCycSp}}({\rm THH}({\rm Perf}({\Bbb S})), {\rm THH}({\rm Perf}({\Bbb S}[t_{2d}])^{t_{2d} - {\rm nil}}))\end{tikzcd}\] of mapping spectra is an equivalence.
\end{proposition}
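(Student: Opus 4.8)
The plan is to derive this directly from Lemma \ref{lem:gradedcontrol}, applied with $R = \THH(\mathbb{S}[t_{2d}])$, $M = \THH(\mathrm{Perf}(\mathbb{S}))$, and $N = \THH(\mathrm{Perf}(\mathbb{S}[t_{2d}])^{t_{2d}-\mathrm{nil}})$, where $M$ and $N$ carry the $\THH(\mathbb{S}[t_{2d}])$-module structures in $\mathrm{grCycSp}$ coming from the graded refinement of the constructions of Section \ref{subsec:transstudy}: each of $\mathrm{Perf}(\mathbb{S})$, $\mathrm{Perf}(\mathbb{S}[t_{2d}])$, and $\mathrm{Perf}(\mathbb{S}[t_{2d}])^{t_{2d}-\mathrm{nil}}$ is (equivalent to) the perfect modules over a graded $\mathbb{E}_1$-ring, so their $\THH$ is naturally an object of $\mathrm{grCycSp}$ by \cite[Appendix A]{AMMN22}; the transfer is induced by a map of such graded rings; and the module structures are supplied by the graded version of the projection formula of Lemma \ref{lem:projectionformula} (together with the fact that $t_{2d}$-nilpotent graded modules form a $\mathrm{Perf}(\mathbb{S}[t_{2d}])$-submodule category). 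Granting this, it remains only to check the three weight hypotheses of Lemma \ref{lem:gradedcontrol}.

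Hypotheses (1) and (2) are quickly dispatched. For (1): $R = \THH(\mathbb{S}[t_{2d}])$ is an $\mathbb{E}_1$-algebra in $\mathrm{grCycSp}$ because $\mathbb{S}[t_{2d}]$ is a graded $\mathbb{E}_2$-ring; it is concentrated in non-negative weights, since $\THH(\mathbb{S}[t_{2d}]) \simeq \mathrm{Th}_{\mathbb{S}}(\epsilon \circ B^{\mathrm{cyc}}(\xi_{2d}))$ and $B^{\mathrm{cyc}}(\mathbb{N})$ has components only in weights $j \ge 0$; and its weight-zero component is the unit $\THH(\mathbb{S})$, either because $(\mathbb{S}[t_{2d}])_0 = \mathbb{S}$ and $\THH$ commutes with the weight-zero part on non-negatively graded rings, or directly from the top row of \eqref{eq:diagramofgradedcycsplit}. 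For (2): $M = \THH(\mathrm{Perf}(\mathbb{S})) = \THH(\mathbb{S})$, regarded as graded by placing $\mathbb{S}$ in weight zero, is concentrated in non-negative (indeed zero) weights.

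The one substantive point is hypothesis (3), that $N = \THH(\mathrm{Perf}(\mathbb{S}[t_{2d}])^{t_{2d}-\mathrm{nil}})$ is concentrated in non-positive weights. Here I would use that, in the graded refinement of the upper cofiber sequence of \eqref{eq:t2dnilcofiberseq}, the object $N$ is the fiber of $\THH(\mathbb{S}[t_{2d}]) \to \THH(\mathbb{S}[t_{2d}^{\pm 1}])$ in $\mathrm{grCycSp}$. As this map is an equivalence in every strictly positive weight — cf.\ the discussion after \eqref{eq:diagramofgradedcyc} and the splitting \eqref{eq:diagramofgradedcycsplit} — its fiber vanishes in all weights $\ge 1$, so $N$ is concentrated in weights $\le 0$. (Alternatively, using the identification $\mathrm{Perf}(\mathbb{S}[t_{2d}])^{t_{2d}-\mathrm{nil}} \simeq \mathrm{Perf}(\mathrm{End}_{\mathbb{S}[t_{2d}]}(\mathbb{S}))$ of Lemma \ref{lem:transretrspec}: the graded $\mathbb{E}_1$-ring $\mathrm{End}_{\mathbb{S}[t_{2d}]}(\mathbb{S})$, being the endomorphism ring of the weight-zero module $\mathbb{S}$ resolved by the two-term complex $\mathbb{S}[t_{2d}]\{-1\}[2d] \xrightarrow{\cdot t_{2d}} \mathbb{S}[t_{2d}]$, is concentrated in weights $0$ and $-1$, hence so is $\THH$ of it, in non-positive weights.) With the three hypotheses verified, Lemma \ref{lem:gradedcontrol} gives the claimed equivalence. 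I expect that the only genuine labour is the bookkeeping behind the first paragraph — verifying that the transfer and the Karoubi sequence \eqref{eq:t2dnilcofiberseq} really do refine to a diagram of $\THH(\mathbb{S}[t_{2d}])$-modules in $\mathrm{grCycSp}$ at all, so that the left-hand mapping spectrum in the statement is defined; everything downstream of that is formal.
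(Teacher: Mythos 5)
Your proof is correct and follows essentially the same route as the paper: the paper likewise applies Lemma \ref{lem:gradedcontrol} with $R = {\rm THH}({\Bbb S}[t_{2d}])$, $M = {\rm THH}({\rm Perf}({\Bbb S}))$ (weight zero), and $N = {\rm THH}({\rm Perf}({\Bbb S}[t_{2d}])^{t_{2d}-{\rm nil}})$, and verifies hypothesis (3) exactly as you do — by identifying $N$ with the fiber of ${\rm THH}({\Bbb S}[t_{2d}]) \to {\rm THH}({\Bbb S}[t_{2d}^{\pm 1}])$, which is an equivalence in strictly positive weights. The ``bookkeeping'' you flag about the existence of the graded module structures on each term of \eqref{eq:t2dnilcofiberseq} is indeed addressed by the paper, but in the subsequent Construction \ref{constr:gradedlift} rather than in the proof of Proposition \ref{prop:gradedlift} itself.
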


\begin{proof} Since the map ${\rm THH}({\Bbb S}[t_{2d}]) \xrightarrow{} {\rm THH}({\Bbb S}[t_{2d}^{\pm 1}])$ is an equivalence in strictly positive weights, its fiber ${\rm THH}({\rm Perf}({\Bbb S}[t_{2d}])^{t_{2d} - {\rm nil}})$ is concentrated in non-positive weights. Since ${\rm THH}({\rm Perf}({\Bbb S}))$ is concentrated in weight zero, Lemma \ref{lem:gradedcontrol} applies to conclude.
\end{proof}

Proposition \ref{prop:gradedlift} implies that ${\rm THH}(p_*) \colon {\rm THH}({\rm Perf}({\Bbb S})) \to {\rm THH}({\rm Perf}({\Bbb S}[t_{2d}]))$ is a ${\rm THH}({\Bbb S}[t_{2d}])$-module map in graded cyclotomic spectra in the following sense: 

\begin{construction}\label{constr:gradedlift} By Lemma \ref{lem:transretrspec}, ${\rm THH}(p_*^{t_{2d} - {\rm nil}})$ is the inclusion ${\Bbb S}^{\rm triv} \to {\Bbb S}^{\rm triv} \oplus {\rm THH}({\Bbb S}[t_{2d}^{\pm 1}])_{< 0}[-1]$ of the weight zero component. This is a map of graded cyclotomic spectra, so Proposition \ref{prop:gradedlift} implies that ${\rm THH}(p_*^{t_{2d} - {\rm nil}})$ lifts to a map of ${\rm THH}({\Bbb S}[t_{2d}])$-modules in graded cyclotomic spectra in an essentially unique way, which we also denote by ${\rm THH}(p_*^{t_{2d}- {\rm nil}})$. Moreover, each map in the upper horizontal composition \eqref{eq:t2dnilcofiberseq} is naturally one of ${\rm THH}({\Bbb S}[t_{2d}])$-modules in graded cyclotomic spectra: This is the case for ${\rm THH}({\Bbb S}[t_{2d}]) \to {\rm THH}({\Bbb S}[t_{2d}^{\pm 1}])$ as it is induced from the map ${\Bbb S}[t_{2d}] \to {\Bbb S}[t_{2d}^{\pm 1}]$ of graded ${\Bbb E}_2$-rings; hence, it is also the case for its fiber ${\rm THH}({\rm Perf}({\Bbb S}[t_{2d}])^{t_{2d} - {\rm nil}}) \to {\rm THH}({\rm Perf}({\Bbb S}[t_{2d}]))$. The resulting composition \[{\rm THH}({\rm Perf}({\Bbb S})) \xrightarrow{{\rm THH}(p_*^{t_{2d} - {\rm nil}})} {\rm THH}({\rm Perf}({\Bbb S}[t_{2d}])^{t_{2d} - {\rm nil}}) \xrightarrow{} {\rm THH}({\rm Perf}({\Bbb S}[t_{2d}]))\] in ${\rm Mod}_{{\rm THH}({\Bbb S}[t_{2d}])}({\rm grCycSp})$ recovers ${\rm THH}(p_*)$ on underlying cyclotomic spectra, and we shall continue to write ${\rm THH}(p_*)$ for this map. We write ${\rm THH}({\Bbb S}[t_{2d}] | t_{2d})$ for the cofiber of ${\rm THH}(p_*)$ in ${\rm Mod}_{{\rm THH}({\Bbb S}[t_{2d}])}({\rm grCycSp})$, and consider the induced map ${\rm THH}({\Bbb S}[t_{2d}] | t_{2d}) \to {\rm THH}({\Bbb S}[t_{2d}^{\pm 1}])$ in this category. 
\end{construction}

In brief, Construction \ref{constr:gradedlift} gives a canonical lift of the diagram \eqref{eq:t2dnilcofiberseq} in ${\rm CycSp}$ to the category ${\rm Mod}_{{\rm THH}({\Bbb S}[t_{2d}])}({\rm grCycSp})$. It remains to show, however, that the resulting map \[{\rm THH}(p_*) \colon {\rm THH}({\rm Perf}({\Bbb S})) \xrightarrow{} {\rm THH}({\rm Perf}({\Bbb S}[t_{2d}]))\] of ${\rm THH}({\Bbb S}[t_{2d}])$-modules in graded cyclotomic spectra recovers the ${\rm THH}({\Bbb S}[t_{2d}])$-module map in cyclotomic spectra arising from Lemma \ref{lem:projectionformula}, which we now denote \[{\rm THH}(p_*)_{\rm mod} \colon {\rm THH}({\rm Perf}({\Bbb S})) \xrightarrow{} {\rm THH}({\rm Perf}({\Bbb S}[t_{2d}])).\] Let us write $U \colon {\rm Mod}_{{\rm THH}({\Bbb S}[t_{2d}])}({\rm grCycSp}) \to {\rm Mod}_{{\rm THH}({\Bbb S}[t_{2d}])}({\rm CycSp})$ for the functor forgetting the graded structure. 

\begin{proposition}\label{prop:mainund} The maps ${\rm THH}(p_*)_{{\rm mod}}$ and $U({\rm THH}(p_*))$ are homotopic. 
\end{proposition}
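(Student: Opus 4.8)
The plan is to exhibit both maps as two instantiations of a single map defined with sufficient rigidity that the forgetful functor $U$ cannot distinguish them. Concretely, I would first argue that $U({\rm THH}(p_*))$ is \emph{some} ${\rm THH}({\Bbb S}[t_{2d}])$-module map in ${\rm CycSp}$ lifting the underlying cyclotomic map ${\rm THH}(p_*)$ (this is clear, since $U$ is lax symmetric monoidal and commutes with the forgetful functor to ${\rm CycSp}$, and by Construction \ref{constr:gradedlift} the graded lift sits over the original diagram \eqref{eq:t2dnilcofiberseq}). Likewise ${\rm THH}(p_*)_{\rm mod}$ is a ${\rm THH}({\Bbb S}[t_{2d}])$-module lift of the same underlying cyclotomic map, coming from Lemma \ref{lem:projectionformula}. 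So the question reduces to: the space of ${\rm THH}({\Bbb S}[t_{2d}])$-module lifts in ${\rm CycSp}$ of the given map ${\rm THH}(p_*)$ of cyclotomic spectra is, in the relevant range, either contractible or at least has the property that these two points coincide.

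Second, I would reduce to a mapping-spectrum statement. Both lifts factor through ${\rm THH}({\rm Perf}({\Bbb S}[t_{2d}])^{t_{2d}-{\rm nil}})$: for the graded lift this is built into Construction \ref{constr:gradedlift}, and for ${\rm THH}(p_*)_{\rm mod}$ one uses that the transfer $p_* \colon {\rm Perf}({\Bbb S}) \to {\rm Perf}({\Bbb S}[t_{2d}])$ factors through ${\rm Perf}({\Bbb S}[t_{2d}])^{t_{2d}-{\rm nil}}$ at the level of categories, hence compatibly with all module structures. The composite ${\rm THH}({\rm Perf}({\Bbb S}[t_{2d}])^{t_{2d}-{\rm nil}}) \to {\rm THH}({\rm Perf}({\Bbb S}[t_{2d}]))$ is the same module map in both cases (it is induced by the map of graded ${\Bbb E}_2$-rings ${\Bbb S}[t_{2d}] \to {\Bbb S}[t_{2d}^{\pm1}]$, passing to fibers). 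So it suffices to show that the two lifts of ${\rm THH}(p_*^{t_{2d}-{\rm nil}}) \colon {\rm THH}({\rm Perf}({\Bbb S})) \to {\rm THH}({\rm Perf}({\Bbb S}[t_{2d}])^{t_{2d}-{\rm nil}})$ agree as ${\rm THH}({\Bbb S}[t_{2d}])$-module maps in ${\rm CycSp}$. Now I invoke Proposition \ref{prop:gradedlift}: the forgetful map
\[{\rm map}_{{\rm Mod}_{{\rm THH}({\Bbb S}[t_{2d}])}({\rm grCycSp})}({\rm THH}({\rm Perf}({\Bbb S})), {\rm THH}({\rm Perf}({\Bbb S}[t_{2d}])^{t_{2d}-{\rm nil}})) \to {\rm map}_{{\rm grCycSp}}(\,-\,,-\,)\]
is an equivalence, and by the same argument (Lemma \ref{lem:gradedcontrol}, applied now with the \emph{non-}graded target being the pushforward of a non-positively-weighted object along $U$) the analogous forgetful map out of ${\rm map}_{{\rm Mod}_{{\rm THH}({\Bbb S}[t_{2d}])}({\rm CycSp})}(-,-)$ is also an equivalence. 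Indeed $U$ sends ${\rm THH}({\rm Perf}({\Bbb S}[t_{2d}])^{t_{2d}-{\rm nil}})$ to its underlying cyclotomic spectrum, and the module structure over ${\rm THH}({\Bbb S}[t_{2d}])$ in ${\rm CycSp}$ is pulled back along the weight-forgetting map; the content of Lemma \ref{lem:gradedcontrol} is exactly that such a module mapping spectrum, out of a weight-zero object into a non-positively-weighted one, is insensitive to whether one remembers the weight or not, because the bar-resolution limit only sees the weight-zero part $R_0 = {\Bbb S}$ of $R = {\rm THH}({\Bbb S}[t_{2d}])$.

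Putting this together: both ${\rm map}_{{\rm Mod}_R({\rm grCycSp})}$ and ${\rm map}_{{\rm Mod}_R({\rm CycSp})}$ receive equivalences from (resp.\ map by equivalences to) the underlying ${\rm map}_{{\rm grCycSp}}$ and ${\rm map}_{{\rm CycSp}}$, and these fit into a commuting square with the weight-forgetting maps. Chasing the two points ${\rm THH}(p_*)$ (graded lift) and ${\rm THH}(p_*)_{\rm mod}$ around this square: they have the same image in ${\rm map}_{{\rm CycSp}}({\rm THH}({\rm Perf}({\Bbb S})), {\rm THH}({\rm Perf}({\Bbb S}[t_{2d}])^{t_{2d}-{\rm nil}}))$ by construction (both lift ${\rm THH}(p_*^{t_{2d}-{\rm nil}})$, which by Lemma \ref{lem:transretrspec} is the explicit split inclusion of the weight-zero summand), and since ${\rm map}_{{\rm Mod}_R({\rm CycSp})}(-,-) \to {\rm map}_{{\rm CycSp}}(-,-)$ is an equivalence they already agree there; the graded lift maps to ${\rm THH}(p_*)$ under $U$ by Construction \ref{constr:gradedlift}, and ${\rm THH}(p_*)_{\rm mod}$ maps to itself, so $U({\rm THH}(p_*)) \simeq {\rm THH}(p_*)_{\rm mod}$.

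The main obstacle I anticipate is the second step: verifying that Lemma \ref{lem:gradedcontrol}'s hypotheses genuinely apply to the \emph{ungraded} module mapping spectrum after applying $U$ — i.e.\ that one really can run the weight-filtration argument of that lemma inside ${\rm Mod}_{{\rm THH}({\Bbb S}[t_{2d}])}({\rm CycSp})$ by comparing it to ${\rm Mod}_{{\rm THH}({\Bbb S}[t_{2d}])}({\rm grCycSp})$ through $U$, rather than naively in ${\rm CycSp}$ where there is no weight to exploit. This requires being careful that $U$ is (lax) symmetric monoidal, compatible with the two-sided bar construction $B_\bullet(M, R, R)$ used in Lemma \ref{lem:mappingspectramodules}, and that the comparison of the two limit diagrams is natural in the simplicial variable — precisely the ``surprisingly subtle point'' flagged in the proof outline \ref{subsec:proofoutline}(c). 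Once that compatibility is in hand the rest is a formal diagram chase in a square of equivalences.
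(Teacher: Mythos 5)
There is a genuine gap in your second step. You claim that, ``by the same argument'' as Lemma~\ref{lem:gradedcontrol}, the forgetful map
\[
{\rm map}_{{\rm Mod}_{{\rm THH}({\Bbb S}[t_{2d}])}({\rm CycSp})}\big({\rm THH}({\rm Perf}({\Bbb S})), {\rm THH}({\rm Perf}({\Bbb S}[t_{2d}])^{t_{2d}-{\rm nil}})\big) \longrightarrow {\rm map}_{{\rm CycSp}}(-,-)
\]
is an equivalence. But the whole proof of Lemma~\ref{lem:gradedcontrol} lives inside the graded category: at each level of the cosimplicial diagram one restricts $M \otimes R^{\otimes n}$ to its weight-zero component $M_0 \otimes R_0^{\otimes n} \simeq {\Bbb S}$ and then uses the non-positive weights of $N$. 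Once you apply $U$, the objects $M \otimes R^{\otimes n}$ are just cyclotomic spectra; there is no weight decomposition to restrict along, so the argument does not descend. Your ``pushforward along $U$'' phrasing does not repair this: the functor $U$ computes the \emph{same} cosimplicial diagram, but only after having lost exactly the structure the lemma exploits. Indeed the paper's remark after Lemma~\ref{lem:projectionformula} is a warning about this very phenomenon: the sequence $R[t] \xrightarrow{t\cdot} R[t] \to R$ splits in $R$-modules but not in $R[t]$-modules, so ${\rm Ext}^1_{R[t]}(R, R[t])$ is non-zero while ${\rm Ext}^1_R(R, R[t]) = 0$, i.e.\ the forgetful map on module mapping spectra need not be injective, let alone an equivalence. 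Concretely, in your target you have $N \simeq {\Bbb S}^{\rm triv} \oplus {\rm THH}({\Bbb S}[t_{2d}^{\pm 1}])_{<0}[-1]$ as cyclotomic spectra, so ${\rm map}_{\rm CycSp}(M, N)$ has a large direct summand coming from the negative weights, and it is not clear (and you do not argue) that the $R$-module constraint cuts this down to the single weight-zero piece.

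The paper's proof sidesteps this by working with the cofiber maps $f, f_{\rm mod} \colon {\rm THH}({\Bbb S}[t_{2d}]|t_{2d}) \to {\rm THH}({\Bbb S}[t_{2d}^{\pm 1}])$ instead of with ${\rm THH}(p_*^{t_{2d}-{\rm nil}})$. For those, Lemma~\ref{lem:localization} identifies the base-change of the source along $R \to {\rm THH}({\Bbb S}[t_{2d}^{\pm 1}])$, which lets Lemma~\ref{lem:mainund} produce a \emph{retraction} (not an equivalence) of the forgetful map on the relevant $R$-module mapping spectrum. A retraction is enough: it says that among $R$-module maps with a prescribed underlying cyclotomic map, there is at most one up to homotopy, and both $f_{\rm mod}$ and $U(f)$ have the inclusion of non-negative weights as underlying map (Proposition~\ref{prop:controllingf}). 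So the conclusion $f_{\rm mod}\simeq U(f)$ follows, and ${\rm THH}(p_*)_{\rm mod} \simeq U({\rm THH}(p_*))$ is obtained by passing to horizontal fibers. You would need to replace your step two with an argument of this kind; as written, the claimed equivalence of ungraded module mapping spectra is the missing (and in fact false, in general) link.
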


\noindent The proof of Proposition \ref{prop:mainund} requires some preparation:

\begin{lemma}\label{lem:mainund} The map \[\begin{tikzcd}[row sep = small]{\rm map}_{{\rm Mod}_{{\rm THH}({\Bbb S}[t_{2d}])}({\rm CycSp})}({\rm THH}({\Bbb S}[t_{2d}] | t_{2d}), {\rm THH}({\Bbb S}[t_{2d}^{\pm 1}])) \ar{d} \\ {\rm map}_{{\rm CycSp}}({\rm THH}({\Bbb S}[t_{2d}] | t_{2d}), {\rm THH}({\Bbb S}[t_{2d}^{\pm 1}]))\end{tikzcd}\] of spectra induced by the forgetful functor ${\rm Mod}_{{\rm THH}({\Bbb S}[t_{2d}])}({\rm CycSp}) \to {\rm CycSp}$ admits a retraction. 
\end{lemma}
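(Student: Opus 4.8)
The plan is to exhibit the source ${\rm THH}({\Bbb S}[t_{2d}] | t_{2d})$ as a retract, in ${\rm Mod}_{{\rm THH}({\Bbb S}[t_{2d}])}({\rm CycSp})$, of a ${\rm THH}({\Bbb S}[t_{2d}])$-module whose underlying cyclotomic spectrum is free (or at least built from ${\rm THH}({\Bbb S}[t_{2d}])$ and ${\rm THH}({\Bbb S}[t_{2d}^{\pm 1}])$ by colimits), so that the forgetful comparison map becomes an equivalence after applying ${\rm map}(-, {\rm THH}({\Bbb S}[t_{2d}^{\pm 1}]))$ on that larger module, and then to transport the retraction of mapping spectra back along the retract. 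Concretely, I would use Lemma \ref{lem:projectionformula}: the transfer ${\rm THH}(p_*)_{\rm mod}$ is a ${\rm THH}({\Bbb S}[t_{2d}])$-module map, its cofiber ${\rm THH}({\Bbb S}[t_{2d}] | t_{2d})$ is a ${\rm THH}({\Bbb S}[t_{2d}])$-module, and by Lemma \ref{lem:localization} base-change along ${\rm THH}({\Bbb S}[t_{2d}]) \to {\rm THH}({\Bbb S}[t_{2d}^{\pm 1}])$ sends it to ${\rm THH}({\Bbb S}[t_{2d}^{\pm 1}])$. So there is a unit map ${\rm THH}({\Bbb S}[t_{2d}] | t_{2d}) \to {\rm THH}({\Bbb S}[t_{2d}] | t_{2d}) \otimes_{{\rm THH}({\Bbb S}[t_{2d}])} {\rm THH}({\Bbb S}[t_{2d}^{\pm 1}]) \simeq {\rm THH}({\Bbb S}[t_{2d}^{\pm 1}])$, which is exactly the underlying map $f$ of Proposition \ref{prop:controllingf}.

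The key input is that $f$ is a split injection of cyclotomic spectra (Proposition \ref{prop:controllingf}). I would like to upgrade this to a ${\rm THH}({\Bbb S}[t_{2d}])$-module splitting \emph{on the free module} ${\rm THH}({\Bbb S}[t_{2d}^{\pm 1}])$ rather than on the source, as follows. Smashing the unit map with ${\rm THH}({\Bbb S}[t_{2d}^{\pm 1}])$ over ${\rm THH}({\Bbb S}[t_{2d}])$ and using that ${\rm THH}({\Bbb S}[t_{2d}^{\pm 1}])$ is idempotent over ${\rm THH}({\Bbb S}[t_{2d}])$ in the sense that ${\rm THH}({\Bbb S}[t_{2d}^{\pm 1}]) \otimes_{{\rm THH}({\Bbb S}[t_{2d}])} {\rm THH}({\Bbb S}[t_{2d}^{\pm 1}]) \simeq {\rm THH}({\Bbb S}[t_{2d}^{\pm 1}])$ (which holds since ${\Bbb S}[t_{2d}^{\pm 1}] \otimes_{{\Bbb S}[t_{2d}]} {\Bbb S}[t_{2d}^{\pm 1}] \simeq {\Bbb S}[t_{2d}^{\pm 1}]$ and ${\rm THH}$ is symmetric monoidal), one sees that the base-changed map $f \otimes {\rm THH}({\Bbb S}[t_{2d}^{\pm 1}])$ is an equivalence. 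Then for any ${\rm THH}({\Bbb S}[t_{2d}])$-module $N$, the comparison map of mapping spectra fits into the diagram whose ``linear'' corner computes ${\rm map}_{{\rm Mod}_{{\rm THH}({\Bbb S}[t_{2d}^{\pm 1}])}}(f \otimes {\rm id}, N)$ after base-change of the target; I would instead argue more directly, as in the next paragraph.

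The cleanest route, I expect, is: the underlying cyclotomic spectrum of ${\rm THH}({\Bbb S}[t_{2d}] | t_{2d})$ is a retract of ${\rm THH}({\Bbb S}[t_{2d}^{\pm 1}])$ via $f$ and its cyclotomic retraction $r$; but $f$ is the \emph{unit of a free–forgetful-type situation}, namely ${\rm THH}({\Bbb S}[t_{2d}^{\pm 1}]) \simeq {\rm THH}({\Bbb S}[t_{2d}] | t_{2d}) \otimes_{{\rm THH}({\Bbb S}[t_{2d}])} {\rm THH}({\Bbb S}[t_{2d}^{\pm 1}])$ is ``extension of scalars applied to the source'', so by the usual projection/adjunction argument (Lemma \ref{lem:mappingspectramodules}) the map of mapping spectra
\[
{\rm map}_{{\rm Mod}_{{\rm THH}({\Bbb S}[t_{2d}])}({\rm CycSp})}({\rm THH}({\Bbb S}[t_{2d}^{\pm 1}]), N) \longrightarrow {\rm map}_{{\rm CycSp}}({\rm THH}({\Bbb S}[t_{2d}^{\pm 1}]), N)
\]
identifies, on the source, with ${\rm map}$ out of the \emph{free} ${\rm THH}({\Bbb S}[t_{2d}^{\pm 1}])$-module on a point after suitable base change — this is not literally free over ${\rm THH}({\Bbb S}[t_{2d}])$, so the honest statement I would prove is that the functor ${\rm map}_{{\rm Mod}_{{\rm THH}({\Bbb S}[t_{2d}])}({\rm CycSp})}(-, N) \to {\rm map}_{{\rm CycSp}}(-, N)$, evaluated on $f$, is a retract (with retraction induced by $r$ together with ${\rm THH}({\Bbb S}[t_{2d}])$-linearity of the composite $f \circ r$, which is an idempotent ${\rm THH}({\Bbb S}[t_{2d}])$-module endomorphism of ${\rm THH}({\Bbb S}[t_{2d}^{\pm 1}])$ since $f$ and $r$ are cyclotomic and $f$ is the module-unit). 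Passing to the splitting summand cut out by this idempotent gives back ${\rm THH}({\Bbb S}[t_{2d}] | t_{2d})$ on both sides compatibly, which yields the desired retraction.

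The main obstacle will be the final bookkeeping step: showing that the idempotent $f \circ r$ (a priori only cyclotomic, i.e. ${\rm THH}({\Bbb S}[t_{2d}^{\pm 1}])$- and hence ${\rm THH}({\Bbb S}[t_{2d}])$-linear automatically, \emph{because} ${\rm THH}({\Bbb S}[t_{2d}^{\pm 1}])$ is an idempotent ${\rm THH}({\Bbb S}[t_{2d}])$-algebra and $r$ is forced) splits ${\rm THH}({\Bbb S}[t_{2d}^{\pm 1}])$ as a ${\rm THH}({\Bbb S}[t_{2d}])$-module in a way that recovers the module-theoretic ${\rm THH}({\Bbb S}[t_{2d}] | t_{2d})$ of Lemma \ref{lem:localization} on the nose — i.e. that the two idempotents (the ``underlying'' one and any lift to ${\rm Mod}_{{\rm THH}({\Bbb S}[t_{2d}])}$) agree. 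Here I would lean on Proposition \ref{prop:controllingf}'s explicit weight decomposition: the retraction $r$ is projection onto ${\rm THH}({\Bbb S}[t_{2d}^{\pm 1}])_{\geq 0}$, which is visibly a graded, hence ${\rm THH}({\Bbb S}[t_{2d}])$-linear, idempotent, so the splitting is already defined in ${\rm Mod}_{{\rm THH}({\Bbb S}[t_{2d}])}({\rm CycSp})$ and the retraction of mapping spectra is obtained by precomposition with this module idempotent. This also explains why one gets a \emph{retraction} and not an equivalence: the complementary summand ${\rm THH}({\Bbb S}[t_{2d}^{\pm 1}])_{<0}$ contributes module maps that need not be detected on underlying cyclotomic spectra.
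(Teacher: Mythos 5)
Your final step contains a genuine error. You assert that the projection $r \colon {\rm THH}({\Bbb S}[t_{2d}^{\pm 1}]) \to {\rm THH}({\Bbb S}[t_{2d}^{\pm 1}])_{\ge 0}$ is ``visibly a graded, hence ${\rm THH}({\Bbb S}[t_{2d}])$-linear, idempotent.'' But being compatible with the weight grading does not imply module-linearity over a non-negatively graded ring. Concretely, if $m$ lives in weight $n > 0$ of ${\rm THH}({\Bbb S}[t_{2d}])$ and $x$ lives in weight $k$ with $k < 0 \le n + k$ in ${\rm THH}({\Bbb S}[t_{2d}^{\pm 1}])$, then $r(m \cdot x) = m \cdot x$ while $m \cdot r(x) = 0$; since multiplication by $t_{2d}$ is precisely such an operation, the projection $r$ is \emph{not} a ${\rm THH}({\Bbb S}[t_{2d}])$-module map. (The inclusion of the non-negative weights \emph{is} a module map; the projection onto them is not. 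This is the same subtlety the paper flags in the Remark following Lemma~\ref{lem:projectionformula}: a split injection of underlying objects need not split in the module category.) So your $f \circ r$ is not an idempotent in ${\rm Mod}_{{\rm THH}({\Bbb S}[t_{2d}])}({\rm CycSp})$, and the argument does not close.

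The paper avoids this trap by not asking for a module-level retraction of $f$ at all. You correctly collected the right ingredients---Lemma~\ref{lem:localization}, the idempotence of ${\rm THH}({\Bbb S}[t_{2d}^{\pm 1}])$, and the unit map---but the way to assemble them is to work directly with the mapping spectra. By extension/restriction of scalars along ${\rm THH}({\Bbb S}[t_{2d}]) \to {\rm THH}({\Bbb S}[t_{2d}^{\pm 1}])$ together with Lemma~\ref{lem:localization}, the module mapping spectrum
\[
{\rm map}_{{\rm Mod}_{{\rm THH}({\Bbb S}[t_{2d}])}({\rm CycSp})}({\rm THH}({\Bbb S}[t_{2d}] | t_{2d}), {\rm THH}({\Bbb S}[t_{2d}^{\pm 1}]))
\]
is identified with ${\rm map}_{{\rm Mod}_{{\rm THH}({\Bbb S}[t_{2d}^{\pm 1}])}({\rm CycSp})}({\rm THH}({\Bbb S}[t_{2d}^{\pm 1}]), {\rm THH}({\Bbb S}[t_{2d}^{\pm 1}]))$, which by the free-module adjunction is ${\rm map}_{{\rm CycSp}}({\Bbb S}^{\rm triv}, {\rm THH}({\Bbb S}[t_{2d}^{\pm 1}]))$. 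The retraction of the forgetful map is then simply precomposition with the cyclotomic map ${\Bbb S}^{\rm triv} \to {\rm THH}({\Bbb S}[t_{2d}] | t_{2d})$, and the check that this retracts the composite boils down to $f$ being the unit map. No module retraction of $f$ is produced or needed. Once you see that the module mapping spectrum collapses to the much smaller ${\rm map}_{{\rm CycSp}}({\Bbb S}^{\rm triv}, -)$, the retraction is nearly formal; your attempt instead tried to split $f$ itself, which is where the nonexistent linear retraction is secretly being invoked.
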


\begin{proof} For brevity, we shall write e.g.\ ${\rm CycSp}_{{\rm THH}({\Bbb S}[t_{2d}])}$ for ${\rm Mod}_{{\rm THH}({\Bbb S}[t_{2d}])}({\rm CycSp})$ throughout this proof. Consider the commutative diagram \[\begin{tikzpicture}[baseline= (a).base]
\node[scale=.78] (a) at (0,0){\begin{tikzcd}{\rm map}_{{\rm CycSp}}({\Bbb S}^{\rm triv}, {\rm THH}({\Bbb S}[t_{2d}^{\pm 1}])) \ar{r} \ar{d}{\simeq} & {\rm map}_{{\rm CycSp}}({\rm THH}({\Bbb S}[t_{2d}] | t_{2d}), {\rm THH}({\Bbb S}[t_{2d}^{\pm 1}]))  \\ {\rm map}_{{\rm CycSp}_{{\rm THH}({\Bbb S}[t_{2d}^{\pm 1}])}}({\rm THH}({\Bbb S}[t_{2d}^{\pm 1}]), {\rm THH}({\Bbb S}[t_{2d}^{\pm 1}])) \ar{r}{\simeq} & {\rm map}_{{\rm CycSp}_{{\rm THH}({\Bbb S}[t_{2d}])}}({\rm THH}({\Bbb S}[t_{2d}] | t_{2d}), {\rm THH}({\Bbb S}[t_{2d}^{\pm 1}])) \ar[swap]{u}{} \end{tikzcd}};\end{tikzpicture}\] of mapping spectra. Here, the left-hand vertical equivalence arises from extension of scalars along ${\Bbb S}^{\rm triv} \to {\rm THH}({\Bbb S}[t_{2d}^{\pm 1}])$, while the lower horizontal equivalence arises from Lemma \ref{lem:localization} and restriction of scalars along ${\rm THH}({\Bbb S}[t_{2d}]) \to {\rm THH}({\Bbb S}[t_{2d}^{\pm 1}])$. The upper horizontal map is defined to be the indicated composition, which we claim admits a retraction. But this is provided by the map \[{\rm map}_{{\rm CycSp}}({\rm THH}({\Bbb S}[t_{2d}] | t_{2d}), {\rm THH}({\Bbb S}[t_{2d}^{\pm 1}])) \xrightarrow{} {\rm map}_{{\rm CycSp}}({\Bbb S}^{\rm triv}, {\rm THH}({\Bbb S}[t_{2d}^{\pm 1}]))\] induced by the inclusion ${\Bbb S}^{\rm triv} \to {\rm THH}({\Bbb S}[t_{2d}] | t_{2d})$. 
\end{proof}

\begin{proof}[Proof of Proposition \ref{prop:mainund}] The two maps participate in diagrams of the form \eqref{eq:t2dnilcofiberseq} in their respective categories. There is a factorization \[{\rm THH}({\rm Perf}({\Bbb S})) \xrightarrow{{\rm THH}(p_*^{t_{2d} - {\rm nil}})} {\rm THH}({\rm Perf}({\Bbb S}[t_{2d}])^{t_{2d} - {\rm nil}}) \xrightarrow{} {\rm THH}({\rm Perf}({\Bbb S}[t_{2d}]))\] of the map ${\rm THH}(p_*)$, and similarly of ${\rm THH}(p_*)_{\rm mod}$ in terms of a map we denote by ${\rm THH}(p_*^{t_{2d} - {\rm nil}})_{\rm mod}$. The two maps ${\rm THH}({\rm Perf}({\Bbb S}[t_{2d}])^{t_{2d} - {\rm nil}}) \to {\rm THH}({\rm Perf}({\Bbb S}[t_{2d}]))$ are clearly compatible under $U$, as they both arise from the fiber of the map ${\rm THH}({\Bbb S}[t_{2d}]) \to {\rm THH}({\Bbb S}[t_{2d}^{\pm 1}])$ in the relevant category. It thus suffices to prove that ${\rm THH}(p^{t_{2d} - {\rm nil}}_*)_{\rm mod} \simeq U({\rm THH}(p^{t_{2d} - {\rm nil}}_*))$. 

Let us denote by $f$ and $f_{\rm mod}$ the induced maps ${\rm THH}({\Bbb S}[t_{2d}] | t_{2d}) \to {\rm THH}({\Bbb S}[t_{2d}^{\pm 1}])$ in the categories ${\rm Mod}_{{\rm THH}({\Bbb S}[t_{2d}])}({\rm grCycSp})$ and ${\rm Mod}_{{\rm THH}({\Bbb S}[t_{2d}])}({\rm CycSp})$, respectively. We will prove that $f_{\rm mod} \simeq U(f)$, from which we get ${\rm THH}(p_*)_{\rm mod} \simeq U({\rm THH}(p_*))$ as induced maps of horizontal fibers in the diagram \eqref{eq:t2dnilcofiberseq} of ${\rm THH}({\Bbb S}[t_{2d}])$-modules in cyclotomic spectra. But Lemma \ref{lem:mainund} implies that $f_{\rm mod}$ is unique among ${\rm THH}({\Bbb S}[t_{2d}])$-module maps in cyclotomic spectra with the property that the underlying map of cyclotomic spectra is the inclusion of the non-negative weights (Proposition \ref{prop:controllingf}). Since $U(f)$ also enjoys this property by Construction \ref{constr:gradedlift}, we conclude. 
\end{proof}

\subsection{Establishing the equivalence} We are now in a position to prove: 

\begin{proposition}\label{prop:eqgradedcyc} There is a canonical equivalence \[{\rm THH}({\Bbb S}[t_{2d}^{\pm 1}])_{\ge 0} \simeq {\rm THH}({\Bbb S}[t_{2d}] | t_{2d})\] of ${\rm THH}({\Bbb S}[t_{2d}])$-modules in graded cyclotomic spectra. 
\end{proposition}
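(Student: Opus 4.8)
The plan is to assemble the equivalence from the pieces already in place, upgrading Corollary \ref{cor:undcycspspec} across the forgetful functor $U \colon {\rm Mod}_{{\rm THH}({\Bbb S}[t_{2d}])}({\rm grCycSp}) \to {\rm CycSp}$. First I would work inside ${\rm Mod}_{{\rm THH}({\Bbb S}[t_{2d}])}({\rm grCycSp})$ with the objects produced by Construction \ref{constr:gradedlift}: the map ${\rm THH}(p_*)$ of ${\rm THH}({\Bbb S}[t_{2d}])$-modules in graded cyclotomic spectra, its cofiber ${\rm THH}({\Bbb S}[t_{2d}] | t_{2d})$, and the induced map $f \colon {\rm THH}({\Bbb S}[t_{2d}] | t_{2d}) \to {\rm THH}({\Bbb S}[t_{2d}^{\pm 1}])$. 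By Proposition \ref{prop:controllingf} together with Proposition \ref{prop:mainund}, the underlying map $U(f)$ of cyclotomic spectra is the inclusion of the non-negative weights ${\Bbb S}^{\rm triv} \oplus {\Bbb S}^{\rm triv}[1] \oplus {\rm THH}({\Bbb S}[t_{2d}^{\pm 1}])_{>0} \hookrightarrow {\rm THH}({\Bbb S}[t_{2d}^{\pm 1}])$; since the graded structure on both source and target refines this (the source is concentrated in non-negative weights, matching the weight decomposition of ${\rm THH}({\Bbb S}[t_{2d}^{\pm 1}])$ recorded in \eqref{eq:diagramofgradedcycsplit}), the map $f$ itself is, in ${\rm grCycSp}$, the inclusion of the non-negative-weight summand ${\rm THH}({\Bbb S}[t_{2d}^{\pm 1}])_{\ge 0}$.

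Next I would identify the other map into ${\rm THH}({\Bbb S}[t_{2d}^{\pm 1}])_{\ge 0}$. The canonical map ${\rm THH}({\Bbb S}[t_{2d}^{\pm 1}])_{\ge 0} \to {\rm THH}({\Bbb S}[t_{2d}^{\pm 1}])$ in ${\rm grCycSp}$ is tautologically the inclusion of the non-negative-weight summand, and it is ${\rm THH}({\Bbb S}[t_{2d}])$-linear since ${\rm THH}({\Bbb S}[t_{2d}])$ is concentrated in non-negative weights. Both $f$ and this inclusion therefore exhibit ${\rm THH}({\Bbb S}[t_{2d}^{\pm 1}])_{\ge 0}$ as the non-negative-weight part of ${\rm THH}({\Bbb S}[t_{2d}^{\pm 1}])$ as a ${\rm THH}({\Bbb S}[t_{2d}])$-module in graded cyclotomic spectra. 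Concretely, I would express ${\rm THH}({\Bbb S}[t_{2d}^{\pm 1}])_{\ge 0}$ as the fiber of the projection ${\rm THH}({\Bbb S}[t_{2d}^{\pm 1}]) \to {\rm THH}({\Bbb S}[t_{2d}^{\pm 1}])_{<0}$ onto negative weights, and likewise ${\rm THH}({\Bbb S}[t_{2d}] | t_{2d})$ as the fiber of the composite ${\rm THH}({\Bbb S}[t_{2d}] | t_{2d}) \xrightarrow{f} {\rm THH}({\Bbb S}[t_{2d}^{\pm 1}]) \to {\rm THH}({\Bbb S}[t_{2d}^{\pm 1}])_{<0}$, which is null because $f$ lands in non-negative weights; comparing these two fiber descriptions over the common map to ${\rm THH}({\Bbb S}[t_{2d}^{\pm 1}])$ yields the desired equivalence, and it is canonical because all the maps in sight are.

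The step I expect to be the main obstacle is the first one: pinning down that $f$, as a map in ${\rm grCycSp}$ (not merely on underlying cyclotomic spectra), is exactly the weight-graded inclusion of ${\rm THH}({\Bbb S}[t_{2d}^{\pm 1}])_{\ge 0}$. Knowing $U(f)$ is the non-negative-weight inclusion does not immediately determine the graded map, since the forgetful functor is not faithful on morphisms; the argument has to invoke that ${\rm THH}({\Bbb S}[t_{2d}] | t_{2d})$ is by Construction \ref{constr:gradedlift} genuinely concentrated in non-negative weights (its graded pieces being computed from the cofiber sequence defining it, with ${\rm THH}({\Bbb S})$ in weight $0$ and ${\rm THH}({\Bbb S}[t_{2d}])$ in non-negative weights), so that a graded map to ${\rm THH}({\Bbb S}[t_{2d}^{\pm 1}])$ whose underlying map hits the non-negative weights isomorphically must be the split inclusion weight by weight. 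Once this bookkeeping is in place the conclusion is formal.
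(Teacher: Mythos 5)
Your proposal is correct and takes essentially the same approach as the paper: both arguments turn on the facts that $\mathrm{THH}(\mathbb{S}[t_{2d}]\,|\,t_{2d})$ is concentrated in non-negative weights (by Construction~\ref{constr:gradedlift}) and that $f$ is weight-by-weight an equivalence in non-negative weights (Proposition~\ref{prop:controllingf}). The paper packages this as a corepresentability statement --- both objects corepresent $X \mapsto \mathrm{Map}(X, \mathrm{THH}(\mathbb{S}[t_{2d}^{\pm 1}]))$ on the subcategory $\mathrm{Mod}_{\mathrm{THH}(\mathbb{S}[t_{2d}])}(\mathrm{grCycSp})_{\ge 0}$, via Lemma~\ref{lem:mappingspectramodules} --- whereas you build the equivalence directly by factoring $f$ through the weight-connective cover and invoking conservativity of the forgetful functor; these are two packagings of the same observations, and your worry about $U$ not being faithful is resolved exactly as you suggest. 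One small imprecision in the last paragraph: $\mathrm{THH}(\mathbb{S}[t_{2d}]\,|\,t_{2d})$ is not literally ``the fiber'' of the null composite to $\mathrm{THH}(\mathbb{S}[t_{2d}^{\pm 1}])_{<0}$, since the fiber of a null map $A \to B$ is $A \oplus B[-1]$, not $A$. What you actually use --- and this is fine --- is that nullity of the composite makes $f$ factor through the fiber of the projection, i.e.\ through $\mathrm{THH}(\mathbb{S}[t_{2d}^{\pm 1}])_{\ge 0}$, and that the resulting map is an equivalence because its underlying cyclotomic map is.
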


\begin{proof} Let ${\rm Mod}_{{\rm THH}({\Bbb S}[t_{2d}])}({\rm grCycSp})_{\ge 0} \subset {\rm Mod}_{{\rm THH}({\Bbb S}[t_{2d}])}({\rm grCycSp})$ denote the full subcategory spanned by those objects whose underlying spectrum is concentrated in non-negative weights. The weight-connective cover ${\rm THH}({\Bbb S}[t_{2d}^{\pm 1}])_{\ge 0}$ belongs to this category by definition, while ${\rm THH}({\Bbb S}[t_{2d}] | t_{2d})$ belongs to it by construction: It is the cofiber of the map ${\rm THH}(p_*) \colon {\rm THH}({\rm Perf}({\Bbb S})) \xrightarrow{} {\rm THH}({\rm Perf}({\Bbb S}[t_{2d}]))$ between objects concentrated in non-negative weights. It thus suffices to show that, for any $X \in {\rm Mod}_{{\rm THH}({\Bbb S}[t_{2d}])}({\rm grCycSp})_{\ge 0}$, the maps \[\begin{tikzcd}[row sep = small]{\rm Map}_{{\rm Mod}_{{\rm THH}({\Bbb S}[t_{2d}])}({\rm grCycSp})}(X, {\rm THH}({\Bbb S}[t_{2d}^{\pm 1}])_{\ge 0}) \ar{d} \\ {\rm Map}_{{\rm Mod}_{{\rm THH}({\Bbb S}[t_{2d}])}({\rm grCycSp})}(X, {\rm THH}({\Bbb S}[t_{2d}^{\pm 1}])) \\ {\rm Map}_{{\rm Mod}_{{\rm THH}({\Bbb S}[t_{2d}])}({\rm grCycSp})}(X, {\rm THH}({\Bbb S}[t_{2d}] | t_{2d})) \ar{u}\end{tikzcd}\] induced by the inclusion of the non-negative weights and the map $f$ in \eqref{eq:t2dnilcofiberseq} are equivalences. By Lemma \ref{lem:mappingspectramodules}, the above diagram may be rewritten as \[\begin{tikzcd}[row sep = small]{\rm lim}({\rm Map}_{{\rm grCycSp}}(X \otimes {\rm THH}({\Bbb S}[t_{2d}])^{\otimes n}, {\rm THH}({\Bbb S}[t_{2d}^{\pm 1}])_{\ge 0})) \ar{d} \\ {\rm lim}({\rm Map}_{{\rm grCycSp}}(X \otimes {\rm THH}({\Bbb S}[t_{2d}])^{\otimes n}, {\rm THH}({\Bbb S}[t_{2d}^{\pm 1}]))) \\ {\rm lim}({\rm Map}_{{\rm grCycSp}}(X \otimes {\rm THH}({\Bbb S}[t_{2d}])^{\otimes n}, {\rm THH}({\Bbb S}[t_{2d}] | t_{2d}))) \ar{u}\end{tikzcd}\] involving limits of  mapping spaces in ${\rm grCycSp}$. The domain in each mapping space remains concentrated in non-negative weights since ${\rm THH}({\Bbb S}[t_{2d}])$ is concentrated in non-negative weights. Hence the top arrow is an equivalence by construction, while the bottom is an equivalence by  Proposition \ref{prop:controllingf}.  
\end{proof}

\subsection{Identifying the residue maps} We now aim to identify the two residue maps \[{\rm THH}({\Bbb S}[t_{2d}^{\pm 1}])_{\ge 0} \xrightarrow{\partial^{\rm rep}} {\Bbb S}^{\rm triv}[1] \text{ and } {\rm THH}({\Bbb S}[t_{2d}^{\pm 1}])_{\ge 0} \xrightarrow{\simeq} {\rm THH}({\Bbb S}[t_{2d}] | t_{2d}) \xrightarrow{\partial} {\Bbb S}^{\rm triv}[1]\] as maps of ${\rm THH}({\Bbb S}[t_{2d}])$-modules. Following a suggestion of Rognes, we do this by studying the mapping spectra encoding all possible residue maps. Explicit descriptions of the equivalent mapping spectra \[{\rm map}_{{\rm Mod}_{{\rm THH}({\Bbb S}[t_{2d}])}({\rm CycSp})}({\rm THH}({\Bbb S}[t_{2d}^{\pm 1}])_{\ge 0}, {\Bbb S}^{\rm triv}[1]) \text{ and}\] \[{\rm map}_{{\rm Mod}_{{\rm THH}({\Bbb S}[t_{2d}])}({\rm CycSp})}({\rm THH}({\Bbb S}[t_{2d}] | t_{2d}), {\Bbb S}^{\rm triv}[1])\] are probably complicated, and we see no reason for the forgetful functor to cyclotomic spectra to retain enough information to reduce the problem to the case of underlying cyclotomic spectra (which we solved in Corollary \ref{cor:residuecycl}).  

This is our main reason for working with \emph{graded} cyclotomic spectra. The forgetful functors \begin{equation}\label{eq:forgetfulfunctors}{\rm Mod}_{{\rm THH}({\Bbb S}[t_{2d}])}({\rm grCycSp}) \to {\rm grCycSp} \xrightarrow{{\rm und}} {\rm CycSp}\end{equation} involving \emph{graded} cyclotomic spectra captures all relevant information: 

\begin{theorem}\label{thm:gradedagreement} The map \begin{equation}\label{eq:gradedagreementdiagram}\begin{tikzcd}[row sep = small]{\rm map}_{{\rm Mod}_{{\rm THH}({\Bbb S}[t_{2d}])}({\rm grCycSp})}({\rm THH}({\Bbb S}[t_{2d}^{\pm 1}])_{\ge 0}, {\Bbb S}^{\rm triv}[1]) \ar{d} \\ {\rm map}_{{\rm CycSp}}({\rm THH}({\Bbb S}[t_{2d}^{\pm 1}])_{\ge 0}, {\Bbb S}^{\rm triv}[1]) \end{tikzcd}\end{equation} induced by \eqref{eq:forgetfulfunctors} is equivalent to the inclusion \[{\rm TC}({\Bbb S}) \oplus {\rm TC}({\Bbb S})[1] \to {\rm TC}({\Bbb S}) \oplus {\rm TC}({\Bbb S})[1] \oplus {\rm map}_{{\rm CycSp}}({\rm THH}({\Bbb S}[t_{2d}^{\pm 1}])_{> 0}, X)[1]\] of spectra. The analogous statement holds for ${\rm THH}({\Bbb S}[t_{2d}] | t_{2d})$.  
\end{theorem}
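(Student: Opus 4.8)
The plan is to trade the module structure for the weight grading and then push everything down to cyclotomic spectra along the two functors in \eqref{eq:forgetfulfunctors}, where the grading makes the answer visible. Write $R = {\rm THH}({\Bbb S}[t_{2d}])$, let $M$ denote ${\rm THH}({\Bbb S}[t_{2d}^{\pm 1}])_{\ge 0}$ (equivalently, by Proposition \ref{prop:eqgradedcyc}, the graded cyclotomic spectrum ${\rm THH}({\Bbb S}[t_{2d}] | t_{2d})$), and let $N = {\Bbb S}^{\rm triv}[1]$ with its $R$-module structure in ${\rm grCycSp}$, concentrated in weight zero. The object $M$ is concentrated in non-negative weights, $R$ is concentrated in non-negative weights with $R_0 = {\rm THH}({\Bbb S}) = {\Bbb S}^{\rm triv}$, and $N$ is concentrated in weight zero, hence in non-positive weights; so Lemma \ref{lem:gradedcontrol} applies and the first functor in \eqref{eq:forgetfulfunctors} induces an equivalence ${\rm map}_{{\rm Mod}_R({\rm grCycSp})}(M, N) \xrightarrow{\simeq} {\rm map}_{{\rm grCycSp}}(M, N)$. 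It therefore suffices to understand ${\rm und} \colon {\rm grCycSp} \to {\rm CycSp}$ on ${\rm map}_{{\rm grCycSp}}(M, N)$.

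The key structural point is that, by definition of ${\rm grCycSp}$, the $p$-typical Frobenius $X \to X^{tC_p}$ carries the weight-$i$ summand into the weight-$pi$ summand. Hence for any graded cyclotomic spectrum $Y$ concentrated in non-negative weights, the splitting $Y \simeq Y_0 \oplus Y_{>0}$ of the underlying graded $S^1$-spectrum into its weight-zero and strictly-positive-weight parts is a splitting in ${\rm grCycSp}$: the Frobenius preserves each summand and has no cross-terms, weight zero landing in weight zero and strictly positive weights in strictly positive weights. Writing $Y_0^{\rm cyc}$ for the weight-zero summand with its induced Frobenius, this yields a splitting ${\rm und}(Y) \simeq Y_0^{\rm cyc} \oplus {\rm und}(Y_{>0})$ of cyclotomic spectra, with $Y_0^{\rm cyc}$ a cyclotomic retract of ${\rm und}(Y)$ via the weight-zero projection $\pi_Y$. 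Since $N$ is concentrated in weight zero, ${\rm map}_{{\rm grCycSp}}(Y_{>0}, N)$ vanishes (no weight overlap, so both the underlying graded map and the Frobenius comparison are forced to zero), so inclusion of the weight-zero summand gives ${\rm map}_{{\rm grCycSp}}(Y, N) \xrightarrow{\simeq} {\rm map}_{{\rm grCycSp}}(Y_0, N) = {\rm map}_{{\rm CycSp}}(Y_0^{\rm cyc}, N)$; and a map $f \colon Y \to N$, written as $f_0 \oplus 0$ along $Y \simeq Y_0 \oplus Y_{>0}$, forgets along ${\rm und}$ to $f_0 \circ \pi_Y$. Under these identifications the composite map \eqref{eq:gradedagreementdiagram} is thus precomposition with $\pi_Y$, i.e. the inclusion ${\rm map}_{{\rm CycSp}}(Y_0^{\rm cyc}, N) \hookrightarrow {\rm map}_{{\rm CycSp}}(Y_0^{\rm cyc}, N) \oplus {\rm map}_{{\rm CycSp}}({\rm und}(Y_{>0}), N)$ coming from the splitting of ${\rm und}(Y)$.

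It remains to identify the pieces for $Y = M$. The weight-zero part $M_0^{\rm cyc}$ is ${\Bbb S}^{\rm triv} \oplus {\Bbb S}^{\rm triv}[1]$ as a cyclotomic spectrum by \eqref{eq:diagramofgradedcyc}, \eqref{eq:diagramofgradedcycsplit} and Section \ref{subsec:froblift}, while ${\rm und}(M_{>0}) = {\rm THH}({\Bbb S}[t_{2d}^{\pm 1}])_{>0}$, the induced splitting of ${\rm und}(M)$ being precisely \eqref{eq:diagramofgradedcycsplit}. Since ${\rm map}_{{\rm CycSp}}({\Bbb S}^{\rm triv}, -) = {\rm TC}(-)$ and ${\rm TC}$ is exact, ${\rm map}_{{\rm CycSp}}(M_0^{\rm cyc}, {\Bbb S}^{\rm triv}[1]) \simeq {\rm TC}({\Bbb S}) \oplus {\rm TC}({\Bbb S})[1]$, and the remaining summand is ${\rm map}_{{\rm CycSp}}({\rm THH}({\Bbb S}[t_{2d}^{\pm 1}])_{>0}, {\Bbb S}^{\rm triv}[1])$; this is exactly the asserted inclusion. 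The statement for ${\rm THH}({\Bbb S}[t_{2d}] | t_{2d})$ follows from the same argument via the equivalence of Proposition \ref{prop:eqgradedcyc}, which refines the equivalence of underlying cyclotomic spectra of Corollary \ref{cor:undcycspspec}.

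The main obstacle is organisational rather than conceptual: one must make the retraction ${\rm und}(Y) \simeq Y_0^{\rm cyc} \oplus {\rm und}(Y_{>0})$, and the identification of the forgetful map with ``precompose with $\pi_Y$'', fully precise in the pullback-of-$\infty$-categories model of ${\rm grCycSp}$ and natural in $Y$ — that is, constructing the weight-zero functor $(-)_0^{\rm cyc} \colon {\rm grCycSp}_{\ge 0} \to {\rm CycSp}$ together with natural transformations $(-)_0^{\rm cyc} \Rightarrow {\rm und} \Rightarrow (-)_0^{\rm cyc}$ exhibiting it as a retract and checking compatibility on mapping spectra. Once this bookkeeping is carried out, the remainder is Lemma \ref{lem:gradedcontrol}, the previously established weight-zero computation \eqref{eq:diagramofgradedcycsplit}, and the exactness of ${\rm TC}$.
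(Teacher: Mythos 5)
Your proof is correct and takes essentially the same route as the paper: Lemma \ref{lem:gradedcontrol} to drop the module structure, then the observation that since ${\Bbb S}^{\rm triv}[1]$ is concentrated in weight zero, both ${\rm map}_{{\rm grCycSp}}(-, {\Bbb S}^{\rm triv}[1])$ and the forgetful functor to ${\rm CycSp}$ are controlled by the weight-zero splitting, identifying the composite with precomposition along the projection onto ${\Bbb S}^{\rm triv} \oplus {\Bbb S}^{\rm triv}[1]$. The paper packages this in a small commutative diagram, whereas you spell out the underlying reason (the Frobenius multiplies weight by $p$, so it respects the decomposition into weight zero and strictly positive weights), but the argument is the same.
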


The proof of Theorem \ref{thm:gradedagreement} requires some preparation. We begin by identifying the source and target of \eqref{eq:gradedagreementdiagram}:  

\begin{proposition}\label{prop:equivalencetctc1spec} There is an equivalence of spectra \[{\rm map}_{{\rm Mod}_{{\rm THH}({\Bbb S}[t_{2d}])}({\rm grCycSp})}({\rm THH}({\Bbb S}[t_{2d}^{\pm 1}])_{\ge 0}, {\Bbb S}[1]) \simeq {\rm TC}({\Bbb S}) \oplus {\rm TC}({\Bbb S})[1],\] and similarly for ${\rm THH}({\Bbb S}[t_{2d}] | t_{2d})$ by Proposition \ref{prop:eqgradedcyc}.  
\end{proposition}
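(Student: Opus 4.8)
The plan is to strip away the module structure, then the grading, then the two trivial summands, at each stage invoking a result already recorded in the excerpt. Throughout, write $N := {\Bbb S}[1]$ for the target, meaning the cyclotomic spectrum ${\Bbb S}^{\rm triv}[1]$ placed in weight zero, viewed as a ${\rm THH}({\Bbb S}[t_{2d}])$-module via the augmentation ${\rm THH}({\Bbb S}[t_{2d}]) \to {\Bbb S}^{\rm triv}$ induced by $p \colon {\Bbb S}[t_{2d}] \to {\Bbb S}$.

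First I would apply Lemma \ref{lem:gradedcontrol} with $R = {\rm THH}({\Bbb S}[t_{2d}])$ and $M = {\rm THH}({\Bbb S}[t_{2d}^{\pm 1}])_{\ge 0}$. Its hypotheses hold: $R$ is concentrated in non-negative weights with $R_0 = {\Bbb S}$ by the splitting \eqref{eq:diagramofgradedcycsplit} (this is already used in Proposition \ref{prop:gradedlift}); $M$ is concentrated in non-negative weights by definition of the weight-connective cover; and $N$ is concentrated in weight zero, hence vacuously in non-positive weights. This identifies the mapping spectrum in question with ${\rm map}_{{\rm grCycSp}}({\rm THH}({\Bbb S}[t_{2d}^{\pm 1}])_{\ge 0}, N)$.

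Next I would drop the grading by noting that the evaluation-at-weight-zero functor ${\rm ev}_0 \colon {\rm grCycSp} \to {\rm CycSp}$ is well-defined --- since $(X^{tC_p})_0 = X_0^{tC_p}$, evaluation at weight zero commutes with the Frobenius structure maps --- and that ${\rm ev}_0 \dashv i_0$, where $i_0 \colon {\rm CycSp} \to {\rm grCycSp}$ is the fully faithful inclusion placing a cyclotomic spectrum in weight zero (here one uses that $p i = 0$ precisely when $i = 0$, so $i_0$ is compatible with the pullback presentation of ${\rm grCycSp}$). Since $N = i_0({\Bbb S}^{\rm triv}[1])$, this gives
\[{\rm map}_{{\rm grCycSp}}({\rm THH}({\Bbb S}[t_{2d}^{\pm 1}])_{\ge 0}, N) \simeq {\rm map}_{{\rm CycSp}}\big(({\rm THH}({\Bbb S}[t_{2d}^{\pm 1}])_{\ge 0})_0, {\Bbb S}^{\rm triv}[1]\big),\]
and by the discussion in Section \ref{subsec:froblift} (see also Example \ref{ex:degreezero}) the weight-zero component $({\rm THH}({\Bbb S}[t_{2d}^{\pm 1}])_{\ge 0})_0$ is ${\Bbb S}[B^{\rm rep}_{\{0\}}(\langle t \rangle)] \simeq {\Bbb S}^{\rm triv} \oplus {\Bbb S}^{\rm triv}[1]$ as a cyclotomic spectrum. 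Finally I would compute directly: splitting the source yields ${\rm map}_{{\rm CycSp}}({\Bbb S}^{\rm triv}, {\Bbb S}^{\rm triv}[1]) \oplus {\rm map}_{{\rm CycSp}}({\Bbb S}^{\rm triv}[1], {\Bbb S}^{\rm triv}[1])$, and since ${\rm map}_{{\rm CycSp}}({\Bbb S}^{\rm triv}, -)$ computes ${\rm TC}$ and ${\rm THH}({\Bbb S}) \simeq {\Bbb S}^{\rm triv}$, these summands are ${\rm TC}({\Bbb S})[1]$ and ${\rm TC}({\Bbb S})$, giving the stated equivalence with ${\rm TC}({\Bbb S}) \oplus {\rm TC}({\Bbb S})[1]$. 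The case of ${\rm THH}({\Bbb S}[t_{2d}] | t_{2d})$ then follows by transporting along the equivalence ${\rm THH}({\Bbb S}[t_{2d}^{\pm 1}])_{\ge 0} \simeq {\rm THH}({\Bbb S}[t_{2d}] | t_{2d})$ of ${\rm THH}({\Bbb S}[t_{2d}])$-modules in graded cyclotomic spectra from Proposition \ref{prop:eqgradedcyc}.

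I do not expect a serious obstacle: the weight bookkeeping needed to apply Lemma \ref{lem:gradedcontrol} is already in place, and the final computation is formal. The one point deserving care is the second step --- checking that $i_0$ genuinely lands in ${\rm grCycSp}$ and that ${\rm ev}_0 \dashv i_0$ as functors of cyclotomic spectra, i.e.\ that passing to the weight-zero slot is compatible with the entire pullback presentation of ${\rm grCycSp}$, Frobenius structure maps included --- but this is a routine unwinding once one observes that multiplication by $p$ fixes the weight-zero slot and leaves no other structure to check. (Alternatively, Step 2 can be carried out by hand: unwinding the pullback defining ${\rm grCycSp}$ weight by weight shows that a morphism into an object concentrated in weight zero is exactly a morphism of cyclotomic spectra out of the weight-zero component.)
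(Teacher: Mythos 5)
Your proof is correct and follows essentially the same route as the paper: strip the module structure via Lemma \ref{lem:gradedcontrol}, reduce to the weight-zero component (the paper precomposes with the inclusion $\mathbb{S}^{\rm triv} \oplus \mathbb{S}^{\rm triv}[1] \to {\rm THH}(\mathbb{S}[t_{2d}^{\pm 1}])_{\ge 0}$, which is exactly the counit of the adjunction ${\rm ev}_0 \dashv i_0$ that you invoke), and then compute using the representability of ${\rm TC}$ by $\mathbb{S}^{\rm triv}$. Your extra care in verifying that $i_0$ lands in ${\rm grCycSp}$ and is right adjoint to ${\rm ev}_0$ is a reasonable elaboration of the step the paper states tersely, but it is the same mechanism.
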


\begin{proof} Using Lemma \ref{lem:gradedcontrol}, we find that forgetting the module structure induces an equivalence \[{\rm map}_{{\rm Mod}_{{\rm THH}({\Bbb S}[t_{2d}])}({\rm grCycSp})}({\rm THH}({\Bbb S}[t_{2d}^{\pm 1}])_{\ge 0}, {\Bbb S}[1]) \xrightarrow{\simeq} {\rm map}_{{{\rm grCycSp}}}({\rm THH}({\Bbb S}[t_{2d}^{\pm 1}])_{\ge 0}, {\Bbb S}[1])\] of mapping spectra. Since ${\Bbb S}[1]$ is concentrated in weight zero, the inclusion of the weight zero component ${\Bbb S}^{\rm triv} \oplus {\Bbb S}^{\rm triv}[1] \to {\rm THH}({\Bbb S}[t_{2d}^{\pm 1}])_{\ge 0}$ induces an equivalence \[{\rm map}_{{{\rm grCycSp}}}({\rm THH}({\Bbb S}[t_{2d}^{\pm 1}])_{\ge 0}, {\Bbb S}[1]) \xrightarrow{\simeq} {\rm map}_{{{\rm CycSp}}}({\Bbb S}^{\rm triv} \oplus {\Bbb S}^{\rm triv}[1], {\Bbb S}[1]).\] The result now follows from the definition of ${\rm TC}$ \cite[Definition II.1.8(i)]{NS18}. 
\end{proof}

\begin{lemma}\label{lem:equivalencetctc1tr1spec} The spectrum ${\rm map}_{\rm CycSp}({\rm THH}({\Bbb S}[t_{2d}^{\pm 1}])_{\ge 0}, {\Bbb S}[1])$ is equivalent to \[{\rm TC}({\Bbb S}) \oplus {\rm TC}({\Bbb S})[1] \oplus {\rm map}_{{\rm CycSp}}({\rm THH}({\Bbb S}[t_{2d}^{\pm 1}])_{> 0}, {\Bbb S}[1]),\] and similarly for ${\rm THH}({\Bbb S}[t_{2d}] | t_{2d})$ by Proposition \ref{prop:eqgradedcyc}. 
\end{lemma}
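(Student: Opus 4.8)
### Proof proposal for Lemma \ref{lem:equivalencetctc1tr1spec}

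The plan is to decompose the source ${\rm THH}({\Bbb S}[t_{2d}^{\pm 1}])_{\ge 0}$ as a cyclotomic spectrum using the splitting established in Example \ref{ex:degreezero} and Section \ref{subsec:froblift}, and then apply ${\rm map}_{\rm CycSp}(-, {\Bbb S}^{\rm triv}[1])$ termwise. Recall from \eqref{eq:susprepsplit} (and more generally from \eqref{eq:diagramofgradedcycsplit}) that, as a cyclotomic spectrum, ${\rm THH}({\Bbb S}[t_{2d}^{\pm 1}])_{\ge 0}$ splits as ${\Bbb S}^{\rm triv} \oplus {\Bbb S}^{\rm triv}[1] \oplus {\rm THH}({\Bbb S}[t_{2d}^{\pm 1}])_{> 0}$. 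First I would invoke this splitting and the additivity of ${\rm map}_{\rm CycSp}(-, {\Bbb S}^{\rm triv}[1])$ in the first variable to obtain
\[{\rm map}_{\rm CycSp}({\rm THH}({\Bbb S}[t_{2d}^{\pm 1}])_{\ge 0}, {\Bbb S}^{\rm triv}[1]) \simeq {\rm map}_{\rm CycSp}({\Bbb S}^{\rm triv}, {\Bbb S}^{\rm triv}[1]) \oplus {\rm map}_{\rm CycSp}({\Bbb S}^{\rm triv}[1], {\Bbb S}^{\rm triv}[1]) \oplus {\rm map}_{\rm CycSp}({\rm THH}({\Bbb S}[t_{2d}^{\pm 1}])_{> 0}, {\Bbb S}^{\rm triv}[1]).\]

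The second step is to identify the first two summands. By the definition of ${\rm TC}$ as the mapping spectrum ${\rm map}_{\rm CycSp}({\Bbb S}^{\rm triv}, -)$ out of the sphere with trivial cyclotomic structure (\cite[Definition II.1.8(i)]{NS18}, exactly as invoked in the proof of Proposition \ref{prop:equivalencetctc1spec}), we have ${\rm map}_{\rm CycSp}({\Bbb S}^{\rm triv}, {\Bbb S}^{\rm triv}[1]) \simeq {\rm TC}({\Bbb S})[1]$ and ${\rm map}_{\rm CycSp}({\Bbb S}^{\rm triv}[1], {\Bbb S}^{\rm triv}[1]) \simeq {\rm map}_{\rm CycSp}({\Bbb S}^{\rm triv}, {\Bbb S}^{\rm triv}) \simeq {\rm TC}({\Bbb S})$. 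Reassembling, the first two terms contribute ${\rm TC}({\Bbb S}) \oplus {\rm TC}({\Bbb S})[1]$, which is precisely the summand appearing in the statement, and the third term is the desired ${\rm map}_{\rm CycSp}({\rm THH}({\Bbb S}[t_{2d}^{\pm 1}])_{> 0}, {\Bbb S}[1])$ (here I write ${\Bbb S}$ for ${\Bbb S}^{\rm triv}$ as in the statement). For the ``similarly for ${\rm THH}({\Bbb S}[t_{2d}] | t_{2d})$'' clause, I would simply transport the entire argument along the equivalence of graded — hence of underlying — cyclotomic spectra furnished by Proposition \ref{prop:eqgradedcyc}, exactly as in the parenthetical remarks of Proposition \ref{prop:equivalencetctc1spec}; alternatively one can use Corollary \ref{cor:undcycspspec} directly at the level of underlying cyclotomic spectra.

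This lemma is essentially bookkeeping: there is no real obstacle, since the cyclotomic splitting of ${\rm THH}({\Bbb S}[t_{2d}^{\pm 1}])_{\ge 0}$ has already been established upstream (Example \ref{ex:degreezero}, Section \ref{subsec:froblift}, \cite[Proposition 11.3]{RSS25}, \cite[Example 2.2.9]{McC24}). The only point requiring the slightest care is making sure the splitting used is one of \emph{cyclotomic} spectra (not merely of spectra or of $S^1$-spectra) so that ${\rm map}_{\rm CycSp}$ distributes over it — but this is exactly what Section \ref{subsec:froblift} guarantees via the spectra-with-Frobenius-lifts formalism. So the hardest part, such as it is, is just citing the correct earlier splitting; the rest is the definition of ${\rm TC}$ and additivity of mapping spectra.
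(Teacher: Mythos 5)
Your proposal is correct and follows exactly the same route as the paper: apply $\mathrm{map}_{\mathrm{CycSp}}(-,{\Bbb S}^{\mathrm{triv}}[1])$ to the cyclotomic splitting \eqref{eq:diagramofgradedcycsplit} and conclude from the definition of $\mathrm{TC}$. The paper's proof is a two-line version of what you wrote; your unpacking of the two $\mathrm{TC}({\Bbb S})$ summands and the parenthetical transport along Proposition \ref{prop:eqgradedcyc} is the intended argument.
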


\begin{proof} We use the decomposition \eqref{eq:diagramofgradedcycsplit} identifying ${\rm THH}({\Bbb S}[t_{2d}^{\pm 1}])_{\ge 0}$ with  \[{\Bbb S}^{\rm triv} \oplus {\Bbb S}^{\rm triv}[1] \oplus {\rm THH}({\Bbb S}[t_{2d}^{\pm 1}])_{> 0}\] and conclude by the definition of ${\rm TC}$. 
\end{proof}

\begin{remark} For a cyclotomic spectrum $X$, the mapping spectrum \[{\rm map}_{{\rm CycSp}}({\rm THH}({\Bbb S}[t_{2d}^{\pm 1}])_{> 0}, X)\] is a variant of ${\rm TR}$ of $X$, where the role of the flat affine line ${\Bbb S}[t]$ is replaced by its graded analog ${\Bbb S}[t_{2d}]$. Using the setup of Example \ref{ex:degreezero}, we find that the spectrum of Lemma \ref{lem:equivalencetctc1tr1spec} admits the clean description ${\rm TC}({\Bbb S}) \oplus {\rm TC}({\Bbb S})[1] \oplus {\rm TR}({\Bbb S})[1]$ by corepresentability of ${\rm TR}$ by ${\Bbb S}[B^{\rm cyc}_{> 0}(\langle t \rangle)]$ (cf.\ \cite[Theorem B]{McC24}) when $d = 0$. 
\end{remark}

\begin{proof}[Proof of Theorem \ref{thm:gradedagreement}] By Proposition \ref{prop:equivalencetctc1spec} and Lemma \ref{lem:equivalencetctc1tr1spec}, \eqref{eq:gradedagreementdiagram} is indeed a map of the form ${\rm TC}({\Bbb S}) \oplus {\rm TC}({\Bbb S})[1] \to {\rm TC}({\Bbb S}) \oplus {\rm TC}({\Bbb S})[1] \oplus {\rm map}_{{\rm CycSp}}({\rm THH}({\Bbb S}[t_{2d}^{\pm 1}])_{> 0}, {\Bbb S}[1])$. It remains to see that it is homotopic to the inclusion. For this, we consider the diagram \[\begin{tikzpicture}[baseline= (a).base]
\node[scale=.935] (a) at (0,0){\begin{tikzcd}[row sep = small, column sep = tiny]{\rm map}_{{\rm Mod}_{{\rm THH}({\Bbb S}[t_{2d}])}({\rm grCycSp})}({\rm THH}({\Bbb S}[t_{2d}^{\pm 1}])_{\ge 0}, {\Bbb S}^{\rm triv}[1]) \ar[swap]{d}{\simeq} \ar{d}{\text{Lemma \ref{lem:gradedcontrol}}} \\ {\rm map}_{{\rm grCycSp}}({\rm THH}({\Bbb S}[t_{2d}^{\pm 1}])_{\ge 0}, {\Bbb S}^{\rm triv}[1]) \ar{d} \ar{r}{\simeq} & {\rm map}_{{\rm CycSp}}({\Bbb S}^{\rm triv} \oplus {\Bbb S}^{\rm triv}[1], {\Bbb S}^{\rm triv}[1]) \ar[bend left = 3 mm]{dl} \\ {\rm map}_{{\rm CycSp}}({\rm THH}({\Bbb S}[t_{2d}^{\pm 1}])_{\ge 0}, {\Bbb S}^{\rm triv}[1]) \end{tikzcd}};\end{tikzpicture}\] of mapping spectra. The horizontal equivalence, induced by the inclusion of the weight zero component ${\Bbb S}^{\rm triv} \oplus {\Bbb S}^{\rm triv}[1] \xrightarrow{} {\rm THH}({\Bbb S}[t_{2d}^{\pm 1}])_{\ge 0}$, comes to life from ${\Bbb S}^{\rm triv}[1]$ being concentrated in weight zero. The bent arrow is induced by the projection ${\rm THH}({\Bbb S}[t_{2d}^{\pm 1}])_{\ge 0}\to {\Bbb S}^{\rm triv} \oplus {\Bbb S}^{\rm triv}[1]$, which is precisely the desired inclusion. 
\end{proof}

\begin{corollary}\label{cor:residuegradedspec} The residue maps \[{\rm THH}({\Bbb S}[t_{2d}^{\pm 1}])_{\ge 0} \xrightarrow{\partial^{\rm rep}} {\Bbb S}^{\rm triv}[1] \quad \text{and} \quad {\rm THH}({\Bbb S}[t_{2d}^{\pm 1}]_{\ge 0} \xrightarrow{\simeq} {\rm THH}({\Bbb S}[t_{2d}] | t_{2d}) \xrightarrow{\partial} {\Bbb S}^{\rm triv}[1]\] are homotopic as maps of ${\rm THH}({\Bbb S}[t_{2d}])$-modules in graded cyclotomic spectra. 
\end{corollary}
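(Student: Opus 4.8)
The plan is to deduce this from Theorem \ref{thm:gradedagreement} together with the underlying-cyclotomic comparison already obtained in Corollary \ref{cor:residuecycl}. First I would recognize both residue maps as morphisms in ${\rm Mod}_{{\rm THH}({\Bbb S}[t_{2d}])}({\rm grCycSp})$: the map $\partial^{\rm rep} \colon {\rm THH}({\Bbb S}[t_{2d}^{\pm 1}])_{\ge 0} \to {\Bbb S}^{\rm triv}[1]$ is the rotation of the cofiber sequence ${\rm THH}({\Bbb S}[t_{2d}]) \to {\rm THH}({\Bbb S}[t_{2d}^{\pm 1}])_{\ge 0} \to {\Bbb S}^{\rm triv}[1]$, which is a cofiber sequence of ${\rm THH}({\Bbb S}[t_{2d}])$-modules in graded cyclotomic spectra by \eqref{eq:diagramofgradedcyc}; and $\partial$ is the rotation of the cofiber sequence ${\rm THH}({\Bbb S}) \to {\rm THH}({\Bbb S}[t_{2d}]) \to {\rm THH}({\Bbb S}[t_{2d}] | t_{2d})$, which lives in ${\rm Mod}_{{\rm THH}({\Bbb S}[t_{2d}])}({\rm grCycSp})$ by Construction \ref{constr:gradedlift}, precomposed with the canonical equivalence ${\rm THH}({\Bbb S}[t_{2d}^{\pm 1}])_{\ge 0} \simeq {\rm THH}({\Bbb S}[t_{2d}] | t_{2d})$ of Proposition \ref{prop:eqgradedcyc}. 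Thus both sides of the claimed homotopy are well-defined morphisms in the graded module category.

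Next I would invoke Theorem \ref{thm:gradedagreement}: the composite forgetful functor \eqref{eq:forgetfulfunctors} induces on the pertinent mapping spectra the inclusion of a direct summand, hence a split monomorphism, and in particular an injection on $\pi_0$. Consequently, two morphisms in ${\rm Mod}_{{\rm THH}({\Bbb S}[t_{2d}])}({\rm grCycSp})$ out of ${\rm THH}({\Bbb S}[t_{2d}^{\pm 1}])_{\ge 0}$, or out of ${\rm THH}({\Bbb S}[t_{2d}] | t_{2d})$, into ${\Bbb S}^{\rm triv}[1]$ that become homotopic as maps of cyclotomic spectra are already homotopic in the graded module category. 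It then remains only to check that, after forgetting all extra structure, the two maps above are precisely the two maps compared in Corollary \ref{cor:residuecycl}. For $\partial^{\rm rep}$ this is immediate; for the other map it reduces to the assertion that the forgetful functor carries the equivalence of Proposition \ref{prop:eqgradedcyc} to the equivalence of Corollary \ref{cor:undcycspspec}. This in turn follows because both are equivalences under ${\rm THH}({\Bbb S}[t_{2d}])$ compatible with the structure maps to ${\rm THH}({\Bbb S}[t_{2d}^{\pm 1}])$, and by Proposition \ref{prop:controllingf} these structure maps are the (injective) inclusions of the non-negative weights, which pins the equivalence down uniquely.

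With that compatibility in hand, Corollary \ref{cor:residuecycl} supplies a homotopy between the underlying cyclotomic maps, and the split injectivity coming from Theorem \ref{thm:gradedagreement} promotes it to a homotopy of ${\rm THH}({\Bbb S}[t_{2d}])$-module maps in graded cyclotomic spectra. The only step here that is not purely formal — the rest being bookkeeping once Theorem \ref{thm:gradedagreement} is available — is this last identification of the two equivalences; I would settle it by the uniqueness argument just sketched, or, alternatively, by observing that under the splitting \eqref{eq:diagramofgradedcycsplit} every equivalence in question is forced to act as the identity on the weight-zero and positive-weight summands, so that the underlying cyclotomic comparison really is the relevant one.
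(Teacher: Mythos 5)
Your proof is correct and follows the paper's strategy exactly: first establish agreement of the residue maps as maps of underlying cyclotomic spectra (Corollary \ref{cor:residuecycl}), then lift this to the graded module category using the split-injectivity of the forgetful functor on the relevant mapping spectra supplied by Theorem \ref{thm:gradedagreement}. The only cosmetic difference is that you explicitly flag and argue for the compatibility of the equivalences of Proposition \ref{prop:eqgradedcyc} and Corollary \ref{cor:undcycspspec} under the forgetful functor, whereas the paper encodes this in a commutative diagram of mapping spectra whose horizontal arrows are precisely those two equivalences.
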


\begin{proof} They are homotopic as maps of cyclotomic spectra by Corollary \ref{cor:residuecycl}. But in the diagram \[\begin{tikzpicture}[baseline= (a).base]
\node[scale=.735] (a) at (0,0){\begin{tikzcd}[column sep = small]{\rm map}_{{\rm Mod}_{{\rm THH}({\Bbb S}[t_{2d}])}({\rm grCycSp})}({\rm THH}({\Bbb S}[t_{2d}^{\pm 1}])_{\ge 0}, {\Bbb S}^{\rm triv}[1]) \ar[swap]{d}{\simeq} \ar{d}{\text{Lemma \ref{lem:gradedcontrol}}} \ar{r}{\simeq} & {\rm map}_{{\rm Mod}_{{\rm THH}({\Bbb S}[t_{2d}])}({\rm grCycSp})}({\rm THH}({\Bbb S}[t_{2d}] | t_{2d}), {\Bbb S}^{\rm triv}[1]) \ar[swap]{d}{\simeq} \ar{d}{\text{Lemma \ref{lem:gradedcontrol}}}   \\ {\rm map}_{{\rm grCycSp}}({\rm THH}({\Bbb S}[t_{2d}^{\pm 1}])_{\ge 0}, {\Bbb S}^{\rm triv}[1]) \ar{d} & {\rm map}_{{\rm grCycSp}}({\rm THH}({\Bbb S}[t_{2d}] | t_{2d}), {\Bbb S}^{\rm triv}[1]) \ar{d} \\ {\rm map}_{{\rm CycSp}}({\rm THH}({\Bbb S}[t_{2d}^{\pm 1}])_{\ge 0}, {\Bbb S}^{\rm triv}[1]) \ar{r}{\simeq} &  {\rm map}_{{\rm CycSp}}({\rm THH}({\Bbb S}[t_{2d}] | t_{2d}), {\Bbb S}^{\rm triv}[1]) \end{tikzcd}};\end{tikzpicture}\] with vertical maps induced by \eqref{eq:forgetfulfunctors} and horizontal equivalences induced by Proposition \ref{prop:eqgradedcyc} and Corollary \ref{cor:undcycspspec}, both vertical compositions are inclusions by Theorem \ref{thm:gradedagreement}. Hence the two residue maps are homotopic in ${\rm Mod}_{{\rm THH}({\Bbb S}[t_{2d}])}({\rm grCycSp})$, since they are homotopic in ${\rm CycSp}$. 
\end{proof}

\subsection{Finalizing the proof of Theorem \ref{thm:mainthmspec}}\label{subsec:finalizingproof} Base-changing the equivalence of Proposition \ref{prop:eqgradedcyc} of underlying ${\rm THH}({\Bbb S}[t_{2d}])$-modules in cyclotomic spectra along the map ${\rm THH}({\Bbb S}[t_{2d}]) \xrightarrow{t_{2d} \mapsto x} {\rm THH}(A)$ to obtain an equivalence \begin{equation}\label{eq:compmapalmosttherespec}{\rm THH}(A) \otimes_{{\rm THH}({\Bbb S}[t_{2d}])} {\rm THH}({\Bbb S}[t_{2d}^{\pm 1}])_{\ge 0} \xrightarrow{} {\rm THH}(A) \otimes_{{\rm THH}({\Bbb S}[t_{2d}])} {\rm THH}({\Bbb S}[t_{2d}] | t_{2d})\end{equation} of ${\rm THH}(A)$-modules in cyclotomic spectra. By definition, the source of this map is ${\rm THH}(A, \langle x \rangle)$. We now identify the target:

\begin{lemma}\label{lem:transbasechangespec} There is an equivalence \[{\rm THH}(A) \otimes_{{\rm THH}({\Bbb S}[t_{2d}])} {\rm THH}({\Bbb S}[t_{2d}] | t_{2d}) \xrightarrow{} {\rm THH}(A | x)\] of ${\rm THH}(A)$-modules in cyclotomic spectra. 
\end{lemma}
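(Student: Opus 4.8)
The plan is to derive the equivalence by base change from the situation over ${\Bbb S}[t_{2d}]$, which we have already analysed. First I would recall that, in the ${\Bbb E}_2$-setting at hand, ${\rm THH}(A|x)$ is by definition the cofiber of the transfer ${\rm THH}(q_*)\colon {\rm THH}(A/\!/x)\to{\rm THH}(A)$, where $A/\!/x := A\otimes_{{\Bbb S}[t_{2d}]}{\Bbb S}$ is the derived quotient — an ${\Bbb E}_1$-ring, since the map ${\Bbb S}[t_{2d}]\to A$ is ${\Bbb E}_2$ — the map $q\colon A\to A/\!/x$ exhibits its target as a perfect $A$-module, being the base change along ${\Bbb S}[t_{2d}]\to A$ of the perfect ${\Bbb S}[t_{2d}]$-module ${\Bbb S}\simeq{\rm cof}({\Bbb S}[t_{2d}][2d]\xrightarrow{\cdot t_{2d}}{\Bbb S}[t_{2d}])$, and $q_*$ is restriction of scalars along $q$. (When $A$ is a discrete commutative ring and $x$ a non-zero divisor one has $A/\!/x\simeq A/x$, so this recovers the term of Theorem~\ref{thm:mainthm}.) By Lemma~\ref{lem:projectionformula} the map ${\rm THH}(q_*)$ is naturally one of ${\rm THH}(A)$-modules in cyclotomic spectra, just as ${\rm THH}(p_*)\colon {\rm THH}({\Bbb S})\to{\rm THH}({\Bbb S}[t_{2d}])$ is a ${\rm THH}({\Bbb S}[t_{2d}])$-module map whose cofiber is ${\rm THH}({\Bbb S}[t_{2d}]|t_{2d})$.

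Next I would carry out two identifications. For the source: symmetric monoidality of ${\rm THH}$ (\cite[Remark~6.10]{BGT14}) — which in particular forces ${\rm THH}$ to preserve relative tensor products of ${\Bbb E}_1$-rings, as it preserves geometric realizations — produces a canonical equivalence
\[{\rm THH}(A)\otimes_{{\rm THH}({\Bbb S}[t_{2d}])}{\rm THH}({\Bbb S})\xrightarrow{\ \simeq\ }{\rm THH}(A\otimes_{{\Bbb S}[t_{2d}]}{\Bbb S})={\rm THH}(A/\!/x)\]
of ${\rm THH}(A)$-modules in cyclotomic spectra, the same computation used in the proof of Lemma~\ref{lem:localization}. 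For the map: I would produce the commuting square of stable $\infty$-categories
\[\begin{tikzcd}{\rm Perf}({\Bbb S})\ar{r}{p_*}\ar{d}&{\rm Perf}({\Bbb S}[t_{2d}])\ar{d}\\{\rm Perf}(A/\!/x)\ar{r}{q_*}&{\rm Perf}(A)\end{tikzcd}\]
whose vertical arrows are extension of scalars along ${\Bbb S}\to A/\!/x$ and ${\Bbb S}[t_{2d}]\to A$ respectively; its commutativity comes down to associativity of the relative tensor product, since both composites carry $M\in{\rm Perf}({\Bbb S})$ to $M\otimes_{\Bbb S}(A/\!/x)$ regarded as a perfect $A$-module via $q$. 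Tracking the module structures with the projection formula (Lemma~\ref{lem:projectionformula}, applied over ${\Bbb S}[t_{2d}]$ and over $A$), this square identifies $q_*$ with the base change of $p_*$ along ${\rm Perf}({\Bbb S}[t_{2d}])\to{\rm Perf}(A)$; applying the symmetric monoidal ${\rm THH}$, whose value on an extension-of-scalars functor is the associated ${\Bbb E}_1$-ring map, then identifies ${\rm THH}(q_*)$ with the base change of ${\rm THH}(p_*)$ along ${\rm THH}({\Bbb S}[t_{2d}])\to{\rm THH}(A)$. Since base change is exact and ${\rm THH}({\Bbb S}[t_{2d}]|t_{2d})={\rm cof}({\rm THH}(p_*))$, taking cofibers of the horizontal maps gives
\[{\rm THH}(A)\otimes_{{\rm THH}({\Bbb S}[t_{2d}])}{\rm THH}({\Bbb S}[t_{2d}]|t_{2d})\xrightarrow{\ \simeq\ }{\rm cof}({\rm THH}(q_*))={\rm THH}(A|x)\]
of ${\rm THH}(A)$-modules in cyclotomic spectra, which is the assertion.

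The step I expect to be the main obstacle is the last identification: verifying, with the right $\infty$-categorical coherences, that the transfer $q_*$ genuinely is the base change of $p_*$, so that applying ${\rm THH}$ yields the needed base-change square of ${\rm THH}(A)$-modules rather than merely a commuting square of cyclotomic spectra. Concretely this is the compatibility of restriction of scalars with base change along the relative-tensor square of ring spectra, which follows from the projection formula of Lemma~\ref{lem:projectionformula} but must be set up with care. Granting it, the remaining bookkeeping — symmetric monoidality of ${\rm THH}$, exactness of base change, and the cofiber computation — is formal, and the resulting equivalence is canonical, hence usable in the base-change argument that finishes the proof of Theorem~\ref{thm:mainthmspec}.
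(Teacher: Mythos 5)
Your proof takes essentially the same route as the paper: both start from the commuting square of perfect-module categories with $p_*$ and $q_*$ as horizontal transfers and extension of scalars along ${\Bbb S}\to A/\!\!/x$ and ${\Bbb S}[t_{2d}]\to A$ as vertical maps, promote it to a square of ${\rm THH}({\Bbb S}[t_{2d}])$-modules via Lemma~\ref{lem:projectionformula} and symmetric monoidality of ${\rm THH}$, and then take horizontal cofibers after base-changing along ${\rm THH}({\Bbb S}[t_{2d}])\to{\rm THH}(A)$. The only cosmetic difference is that you write the derived quotient $A/\!\!/x$ explicitly where the paper writes $A/x$ (which in the ${\Bbb E}_2$-setting must anyway be read as the derived quotient), and you flag up front the coherence point about transfers and base change that the paper addresses via the projection formula.
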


\begin{proof} We first observe that the diagram \[\begin{tikzcd}{\rm Perf}({\Bbb S}) \ar{r}{p_*} \ar{d} & {\rm Perf}({\Bbb S}[t_{2d}]) \ar{d} \\ {\rm Perf}(A / x) \ar{r}{p_*} & {\rm Perf}(A)\end{tikzcd}\] commutes, with vertical arrows defined by base-change along the maps  ${\Bbb S} \to  A / x$ and ${\Bbb S}[t_{2d}] \xrightarrow{t_{2d} \mapsto x} A$, respectively. This gives rise to the commutative diagram \[\begin{tikzcd}{\rm THH}({\rm Perf}({\Bbb S})) \ar{r}{{\rm THH}(p_*)} \ar{d} & {\rm THH}({\rm Perf}({\Bbb S}[t_{2d}])) \ar{d} \\ {\rm THH}({\rm Perf}(A / x)) \ar{r}{{\rm THH}(p_*)} & {\rm THH}({\rm Perf}(A))\end{tikzcd}\] of ${\rm THH}({\rm Perf}({\Bbb S}[t_{2d}]))$-modules by Lemma \ref{lem:projectionformula}. It thus base-changes to a diagram \[\begin{tikzpicture}[baseline= (a).base]
\node[scale=.74] (a) at (0,0){\begin{tikzcd}{\rm THH}({\rm Perf}(A)) \otimes_{{\rm THH}({\rm Perf}({\Bbb S}[t_{2d}]))} {\rm THH}({\rm Perf}({\Bbb S})) \ar{d}{\simeq} \ar{rr}{{\rm id} \otimes {\rm THH}(p_*)} & & {\rm THH}({\rm Perf}(A)) \otimes_{{\rm THH}({\rm Perf}({\Bbb S}[t_{2d}]))} {\rm THH}({\rm Perf}({\Bbb S}[t_{2d}])) \ar{d}{\simeq} \\ {\rm THH}({\rm Perf}(A / x)) \ar{rr}{{\rm THH}(p_*)} & & {\rm THH}({\rm Perf}(A))\end{tikzcd}};\end{tikzpicture}\] of ${\rm THH}({\rm Perf}(A))$-modules, where the left-hand vertical equivalence is symmetric monoidality of ${\rm THH}$ (cf. \cite[Remark 6.10]{BGT14}). The desired equivalence is that of horizontal cofibers. 
\end{proof}

\begin{proof}[Proof of Theorem \ref{thm:mainthmspec}] Both \eqref{eq:compmapalmosttherespec} the map of Lemma \ref{lem:transbasechangespec} are equivalences. The assertion about the residue maps follows from Corollary \ref{cor:residuegradedspec}. 
\end{proof}

Theorem \ref{thm:mainthm} may now be obtained as a special case with $A$ discrete and $d = 0$, or by copying the above argument in the context of Example \ref{ex:degreezero}. In particular, the structure map ${\Bbb S}[t] \xrightarrow{t \mapsto x} A$ and ${\rm THH}(A, \langle x \rangle)$ admit ${\Bbb E}_{\infty}$-structures in this case. 

We end this section with the following two remarks:

\begin{remark}\label{rem:rognesconj} Let us explain how to recover the exact formulation of \cite[Conjecture 7.5]{Rog14} as a consequence of Theorem \ref{thm:mainthmspec}. We recall its context: $E$ is an even-periodic ${\Bbb E}_{\infty}$-ring of period $2d$, $e = \tau_{\ge 0}(E)$ is its connective cover. We ask that $\pi_0(e)$ is regular, and moreover that $\tau_{\le (2d - 1)}(e) = \pi_0(e)$. The first and last hypotheses imply that $e$ is even with homotopy groups concentrated in degrees multiples of $2d$ so that the setup of \cite{RSS25} allows for the construction of ${\rm THH}(e, \langle x \rangle)$ with $x \in \pi_{2d}(e)$ the periodicity class.  

The conjecture predicts that \begin{equation}\label{eq:rognesconj}{\rm THH}(e, \langle x \rangle) \simeq {\rm WTHH}^\Gamma(e | E)\end{equation} as cyclotomic spectra, where ${\rm WTHH}^\Gamma(e | E)$ is the Blumberg--Mandell term \cite{BM20} participating in the localization sequence \[{\rm THH}(\pi_0(e)) \to {\rm THH}(e) \to {\rm WTHH}^\Gamma(e | E),\] and so it is equivalent to what we have written as ${\rm THH}(e | x)$. The left-hand side ${\rm THH}(e, \langle x \rangle)$ is strictly speaking described as the ${\rm THH}$ of the direct image log structure $j_*{\rm GL}_1^{\cal J}(E)$ formed using the model of \cites{RSS15, RSS18}: Under the present hypotheses, \cite[Lemma B.5]{RSS25} provides an equivalence ${\rm THH}(e, \langle x \rangle) \simeq {\rm THH}(e, j_*{\rm GL}_1^{\cal J}(E))$. As no cyclotomic structure has been constructed on the model ${\rm THH}(e, j_*{\rm GL}_1^{\cal J}(E))$, we interpret the conjecture as referring to the cyclotomic structure on the equivalent term ${\rm THH}(e, \langle x \rangle)$.  

Altogether, Theorem \ref{thm:mainthmspec} produces an equivalence \eqref{eq:rognesconj} of cyclotomic spectra with the predicted identification of the boundary maps. 
\end{remark}

\begin{remark}\label{rem:koinclude} Most of our arguments may be adapted to the context \cites{RSS15, RSS18}. While the setup of \cite{RSS25} does not include a pre-log structure of the form $({\rm ko}, \langle w \rangle)$ generated by the Bott class $w \in \pi_8({\rm ko})$, this works in their old model. If one writes down a cyclotomic structure on ${\rm THH}({\Bbb S}^{\cal J}[D(w)], D(w))$ (which plays the role of ${\rm THH}({\Bbb S}[t_{2d}], \langle t_{2d} \rangle)$ in this setting) with the expected properties, we obtain compatible horizontal cofiber sequences \[\begin{tikzcd}K(\tau_{\le 7}({\rm ko})) \ar{r} \ar{d} & K({\rm ko}) \ar{r} \ar{d} & K({\rm Perf}({\rm ko} | w)) \ar{d} \\ {\rm TC}(\tau_{\le 7}({\rm ko})) \ar{r} & {\rm TC}({\rm ko}) \ar{r} & {\rm TC}({\rm ko}, \langle w\rangle)\end{tikzcd}\] that differs from the cofiber sequences \[\begin{tikzcd}K({\Bbb Z}) \ar{r} \ar{d} & K({\rm ko}) \ar{r} \ar{d} & K({\rm KO}) \ar{d} \\ {\rm TC}({\Bbb Z}) \ar{r} & {\rm TC}({\rm ko}) \ar{r} & {\rm TC}({\rm ko} |  {\rm KO})\end{tikzcd}\] obtained from combining Barwick--Lawson \cite{BL14} and Blumberg--Mandell \cite{BM20}. 
\end{remark}

\section{Compatibility with the Hesselholt--Madsen construction}\label{sec:hmcomp} We now proceed to work towards the proof of Theorem \ref{thm:hmcomp}, making active use of the constructions of the previous sections. To begin, let us first recall the general notion of a log ring and how topological Hochschild homology is defined in this level of generality: 

\subsection{Pre-log and log rings} A \emph{pre-log ring} $(A, M) = (A, M, \alpha)$ consists of a commutative ring $A$, commutative monoid $M$, and map $\alpha \colon M \to (A, \cdot)$ of commutative monoids, adjoint to a map $\bar{\alpha} \colon {\Bbb Z}[M] \to A$ of commutative rings. It is a \emph{log ring} if the canonical map $\alpha^{-1}{\rm GL}_1(A) \xrightarrow{} {\rm GL}_1(A)$ is an isomorphism. The category of pre-log rings embeds fully faithfully into that of log rings, and the inclusion admits a left adjoint $(A, M, \alpha) \mapsto (A, M^a, \alpha^a)$ determined by \[M^a := M \oplus_{\alpha^{-1}{\rm GL}_1(A)} {\rm GL}_1(A) \xrightarrow{\alpha^a} (A, \cdot),\] where $\alpha^a$ is defined by $\alpha$ and the inclusion of units. 

\subsection{The log differentials} Let $(A, M)$ be a pre-log ring. The \emph{log differentials} $\Omega^1_{(A, M)}$ can be defined as the (ordinary, non-derived) pushout of the diagram \[\Omega^1_{A} \xleftarrow{ad\alpha(m)\mapsfrom a \otimes d(m)} A \otimes_{{\Bbb Z}[M]} \Omega^1_{{\Bbb Z}[M]} \xrightarrow{a \otimes d(m) \mapsto a\alpha(m) \otimes \gamma(m)} A \otimes_{{\Bbb Z}} M^{\rm gp}\] of $A$-modules, cf.\ \cite[(1.7)]{Kat89}.

\subsection{Logarithmic topological Hochschild homology}\label{subsec:logthh} Recall the replete bar construction $B^{\rm rep}(M)$ of Section \ref{subsec:repbar}. The following definition is due to Rognes: 

\begin{definition}[{\cite[Definition 8.11]{Rog09}}]\label{def:logthh} Let $(A, M)$ be a pre-log ring. The \emph{log topological Hochschild homology} ${\rm THH}(A, M)$ is the ${\Bbb E}_{\infty}$-ring given by the relative tensor product \[{\rm THH}(A, M) := {\rm THH}(A) \otimes_{{\Bbb S}[B^{\rm cyc}(M)]} {\Bbb S}[B^{\rm rep}(M)]\] of ${\Bbb E}_{\infty}$-rings.
\end{definition}

There are isomorphisms \[\pi_0{\rm THH}(A, M) \cong A \quad \text{and} \quad \pi_1{\rm THH}(A, M) \cong \Omega^1_{(A, M)}\] obtained by combining \cite[Proposition 5.13]{BLPO23} with \cite[Corollary 3.4, Lemma 3.5]{BLPO23Prism}, cf.\ \cite[Definition 5.27]{Rog09} and the ensuing discussion. By \cite[Construction 3.7]{BLPO23Prism}, we find that ${\rm THH}(A, M)$ is naturally an ${\Bbb E}_{\infty}$-algebra in cyclotomic spectra. We set ${\rm TC}(A, M) := {\rm TC}({\rm THH}(A, M))$. 

\begin{remark}\label{rem:logthhinv} By \cite[Theorem 4.24]{RSS15}, logarithmic ${\rm THH}$ is invariant under the logification construction: The canonical map ${\rm THH}(A, M) \xrightarrow{} {\rm THH}(A, M^a)$ is an equivalence. In particular, if ${\cal O}_K$ is a discrete valuation ring with chosen uniformizer $\pi$, the canonical map ${\rm THH}({\cal O}_K, \langle \pi \rangle) \xrightarrow{} {\rm THH}({\cal O}_K, {\cal O}_K \cap {\rm GL}_1(K))$ is an equivalence. In this way, logarithmic ${\rm THH}$ is independent of the choice of uniformizer. Analogously, a variant of logification invariance is established for general log ring spectra in \cite[Theorem 8.15]{RSS25}, rendering the construction of ${\rm THH}(A, \langle x \rangle)$ studied in Section \ref{sec:logthhloc} independent of the choice of map ${\Bbb S}[t_{2d}] \xrightarrow{} A$ capturing $x \in \pi_{2d}(A)$. 
\end{remark}

\subsection{The log differential graded ring $\pi_*{\rm THH}(A, M)$}\label{subsec:logthhlogdiff} In general, the term ${\rm THH}(A, \langle x \rangle)$ carries \emph{a priori} more structure than the abstractly defined cofiber ${\rm THH}(A | x)$; it is an ${\Bbb E}_{\infty}$-ring, whose coefficient ring forms a log differential graded ring in the following sense (cf.\ \cite{Kat89} or \cite[p.\ 4]{HM03}):

\begin{definition}\label{def:logdiff} A \emph{log differential graded ring} $(B_*, M) = (B_*, M, d, \alpha, {\rm dlog})$ is
\begin{enumerate}
\item[(a)] a differential graded ring $(B_*, d)$;
\item[(b)] a pre-log ring $(B_0, M) = (B_0, M, \alpha)$; and
\item[(c)] a map ${\rm dlog} \colon M \to (B_1, +)$ of commutative monoids
\end{enumerate}
satisfying the relations $d(\alpha(m)) = \alpha(m){\rm dlog}(m)$ and $d({\rm dlog}(m)) = 0$. 
\end{definition}

We now explain how $\pi_*{\rm THH}(A, M)$ is an example for general $(A, M)$. The $S^1$-action on the spectrum ${\rm THH}(A, M)$ induces a map \[\delta \colon \pi_*{\rm THH}(A, M) \to \pi_{* + 1}{\rm THH}(A, M).\] We have $\delta^2 = \eta\delta = 0$, since ${\rm THH}(A, M)$ is an $A$-algebra.  The map $\delta$ exhibits $\pi_*{\rm THH}(A, M)$ as a differential graded ring (cf.\ \cite[Section 1]{Hes96}). In degree zero, we choose the given pre-log structure $(A, M)$. We define \[{\rm dlog} \colon M \to \pi_1{\rm THH}(A, M) \cong \Omega^1_{(A, M)}\] to be the canonical map $m \mapsto {\rm dlog}(m)$. This is indeed a log differential graded ring:

\begin{proposition}\label{prop:logthhlogdiff} The structure maps described above exhibit $\pi_*{\rm THH}(A, M)$ as a log differential graded ring. 
\end{proposition}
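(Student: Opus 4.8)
The plan is to check, in turn, the three pieces of data and two relations of Definition~\ref{def:logdiff}, reducing each to something already recorded in Section~\ref{subsec:logthhlogdiff} or to a short structural argument. The underlying differential graded ring is $(\pi_*{\rm THH}(A,M),\delta)$ with $\delta$ the operator induced by the circle action; it is a derivation because ${\rm THH}(A,M)$ is an ${\Bbb E}_{\infty}$-algebra in $S^1$-equivariant spectra and the action is through ring maps (cf.\ \cite[Section~1]{Hes96}), and $\delta^2=\eta\delta=0$ since ${\rm THH}(A,M)$ is an $A$-algebra. In degree zero we take the prescribed pre-log ring $(A,M)$ via $\pi_0{\rm THH}(A,M)\cong A$. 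The map ${\rm dlog}\colon M\to\pi_1{\rm THH}(A,M)\cong\Omega^1_{(A,M)}$ is the composite $M\xrightarrow{\gamma}M^{\rm gp}\to A\otimes_{\Bbb Z}M^{\rm gp}\to\Omega^1_{(A,M)}$, a composite of monoid homomorphisms valued in an abelian group and hence a monoid homomorphism; equivalently it is the effect on $\pi_1$ of the ${\Bbb E}_{\infty}$-ring map ${\Bbb S}[BM^{\rm gp}]\to{\rm THH}(A,M)$ obtained by composing the summand inclusion $BM^{\rm gp}\hookrightarrow B^{\rm rep}(M)$ of \eqref{eq:repiso} with ${\Bbb S}[B^{\rm rep}(M)]\to{\rm THH}(A,M)$. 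Thus only the two relations require work.

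For $d(\alpha(m))=\alpha(m)\,{\rm dlog}(m)$ I would argue by naturality of $\delta$ along the $S^1$-equivariant ${\Bbb E}_{\infty}$-ring map ${\rm THH}(A)\to{\rm THH}(A,M)$, which is the identity on $\pi_0=A$ and the canonical map $\Omega^1_A\to\Omega^1_{(A,M)}$ on $\pi_1$. Since $\delta$ on $\pi_0{\rm THH}(A)=A$ is the universal derivation $d\colon A\to\Omega^1_A=\pi_1{\rm THH}(A)$ --- the classical low-degree identification, which the references establishing $\pi_1{\rm THH}(A,M)\cong\Omega^1_{(A,M)}$ in Section~\ref{subsec:logthh} (\cite{BLPO23}, \cite{BLPO23Prism}) extend compatibly to the log setting --- the class $\delta(\alpha(m))$ is the image of $d\alpha(m)\in\Omega^1_A$ in $\Omega^1_{(A,M)}$. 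In the defining pushout presentation of $\Omega^1_{(A,M)}$, the element $1\otimes d(m)\in A\otimes_{{\Bbb Z}[M]}\Omega^1_{{\Bbb Z}[M]}$ maps to $d\alpha(m)$ through one leg and to $\alpha(m)\otimes\gamma(m)=\alpha(m)\,{\rm dlog}(m)$ through the other, so these agree in $\Omega^1_{(A,M)}$; combining gives the relation.

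The relation $d({\rm dlog}(m))=0$, i.e.\ $\delta({\rm dlog}(m))=0$, is the step I expect to be the main obstacle, since $\pi_2{\rm THH}(A,M)$ admits no useful description and the vanishing must come from the source. The key point is that ${\rm dlog}$ factors through a summand of ${\rm THH}(A,M)$ carrying the \emph{trivial} circle action. Under the isomorphism $B^{\rm rep}(M)\cong M\oplus BM^{\rm gp}$ of \eqref{eq:repiso}, the factor $BM^{\rm gp}$ is precisely the weight-$1_M$ piece $B^{\rm rep}_{\{1_M\}}(M)$, and the pullback square defining $B^{\rm rep}(M)$ identifies this with the component $B^{\rm cyc}_{\{e\}}(M^{\rm gp})$ of the cyclic bar construction of the abelian group $M^{\rm gp}$ lying over its identity element. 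For an abelian group that component is the constant-loop part of the cyclic bar construction, hence equivalent to $BM^{\rm gp}$ with the \emph{trivial} $S^1$-action --- the general form of the weight-zero computation carried out for $M=\langle t\rangle$ in Section~\ref{subsec:froblift}, where $B^{\rm rep}_{\{0\}}(\langle t\rangle)\simeq S^1/C_0$ with trivial action. Since the weight grading on ${\rm THH}(A,M)$ is preserved by the circle action, the map ${\Bbb S}[BM^{\rm gp}]={\Bbb S}[B^{\rm rep}_{\{1_M\}}(M)]\to{\rm THH}(A,M)$ is $S^1$-equivariant for the trivial action on its source; as $\delta$ vanishes identically on a suspension spectrum with trivial circle action, naturality of $\delta$ forces $\delta$ to vanish on the image of $\pi_*{\Bbb S}[BM^{\rm gp}]$, and in particular $\delta({\rm dlog}(m))=0$. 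The one point requiring real care is to verify that the restriction of \eqref{eq:repiso} to the weight-$1_M$ summand is $S^1$-equivariant and that the resulting action is trivial; with that in hand, all three data and both relations are established and the proposition follows.
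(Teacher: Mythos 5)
Your proposal is correct, and for the two relations it takes a route that differs genuinely from the paper's, so it is worth comparing. The paper proves both relations by passing through the linearization map ${\rm THH}(A,M)\to{\rm HH}(A,M)$: citing \cite[Proposition~1.4.5]{Hes96} for the compatibility of $\delta$ with Connes' operator $B$, and \cite[Proposition~2.12]{BLPO23Prism} for the compatibility of $B$ with the de~Rham differential $d$ and for the vanishing $B({\rm dlog}(m))=0$; the low-degree isomorphism then transports both identities from $\Omega^*_{(A,M)}$ to $\pi_*{\rm THH}(A,M)$. You handle the first relation similarly in substance, only phrasing the low-degree reduction through naturality of $\delta$ along ${\rm THH}(A)\to{\rm THH}(A,M)$ together with the pushout presentation of $\Omega^1_{(A,M)}$, rather than through the diagram involving ${\rm HH}(A,M)$; both routes ultimately rest on the Hesselholt linearization result in degree~$\le1$.

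For the second relation $\delta({\rm dlog}(m))=0$ your argument is genuinely different and, I would say, more conceptual: instead of outsourcing to the Hochschild-level computation $B({\rm dlog}(m))=0$ in \cite{BLPO23Prism}, you observe that ${\rm dlog}$ factors through ${\Bbb S}[B^{\rm rep}_{\{1_M\}}(M)]={\Bbb S}[B^{\rm cyc}_{\{e\}}(M^{\rm gp})]\to{\rm THH}(A,M)$, a map of spectra with $S^1$-action whose source carries the \emph{trivial} action, and then use that the degree-one operator $\delta$ vanishes identically on objects with trivial action. This makes the vanishing transparent and keeps the argument entirely within ${\rm THH}$. You correctly flag the one nontrivial input: that the weight-$1_M$ cyclic subset of $B^{\rm rep}(M)$ (equivalently the identity component of $B^{\rm cyc}(M^{\rm gp})$) realizes to $BM^{\rm gp}$ with nullhomotopic $S^1$-action. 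This is the free-loop-space fact that the constant-loop inclusion $BG\hookrightarrow LBG_{[e]}$ is an $S^1$-equivariant equivalence (trivial action on the source), and it is exactly the content the paper uses in the special case $M=\langle t\rangle$ in Section~\ref{subsec:froblift} when identifying $B^{\rm rep}_{\{0\}}(\langle t\rangle)$ with $S^1/C_0$. Spelling this out for general $M^{\rm gp}$ would make the argument self-contained; as written, the proof is sound but leaves that step at the level of a plausibility sketch. In exchange, your version avoids the extrinsic reference to \cite[Proposition~2.12]{BLPO23Prism} and highlights the structural reason ${\rm dlog}$ classes are $\delta$-closed, which dovetails nicely with the replete-bar picture used elsewhere in the paper.
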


\begin{proof} The proof of \cite[Proposition 1.4.5]{Hes96} shows that the linearization map ${\rm THH}(A, M) \to {\rm HH}(A, M)$ participates in the left-hand commutative diagram \[\begin{tikzcd}\pi_n{\rm THH}(A, M) \ar{d}{\delta} \ar{r} & \pi_n{\rm HH}(A, M) \ar{d}{B} & \Omega^n_{(A, M)} \ar{d}{d} \ar{l} \\ \pi_{n + 1}{\rm THH}(A, M) \ar{r} & \pi_{n + 1}{\rm HH}(A, M) & \Omega^{n + 1}_{(A, M)} \ar{l},\end{tikzcd}\] where $B$ is Connes' operator. The right-hand commutative diagram is \cite[Proposition 2.12]{BLPO23Prism}. From this, the relation $\delta(\alpha(m)) = \alpha(m){\rm dlog}(m)$ follows from all horizontal maps being isomorphisms in degrees $0$ and $1$ and the relation $d(\alpha(m)) = \alpha(m){\rm dlog}(m)$ in $\Omega^1_{(A, M)}$. 

It remains to prove that $\delta({\rm dlog}(m)) = 0$. As the linearization map is still an isomorphism in degree $2$, this follows from the relation $B({\rm dlog}(m)) = 0$ established in the proof of \cite[Proposition 2.12]{BLPO23Prism}. 
\end{proof}

\subsection{The special case of regular quotients} We now consider the case of pre-log rings of the form $(A, \langle x \rangle)$ with $A$ regular Noetherian, and we moreover assume that the quotient $A/x$ is regular. Then d\'evissage implies that that there is a diagram \[\begin{tikzcd}K(A/x) \ar{r}{K(p_*)} \ar{d} & K(A) \ar{d} \ar{r} & K(A[x^{-1}]) \ar{d} \\ {\rm THH}(A/x) \ar{r}{{\rm THH}(p_*)} & {\rm THH}(A) \ar{r} & {\rm THH}(A | x)\end{tikzcd}\] of horizontal cofiber sequences. We have an explicit model of the right-hand trace map in mind, namely \[K(A[x^{-1}]) \xleftarrow{\simeq} K({\rm Perf}(A | x)) \xrightarrow{{\rm tr}} {\rm THH}({\rm Perf}(A | x)) =: {\rm THH}(A | x),\]  where ${\rm Perf}(A | x)$ is defined as the cofiber in ${\rm Cat}_{\infty}^{\rm perf}$ of the fully faithful functor appearing in the functorial factorization of the transfer $p_* \colon {\rm Perf}(A/x) \to {\rm Perf}(A)$ provided by \cite[Proposition 2.8]{RSW25}. With our hypotheses, the left-hand map $K({\rm Perf}(A | x)) \xrightarrow{} K(A[x^{-1}])$ is an equivalence by d\'evissage. 

The following statement makes reference to the map \[{\rm dlog} \colon {\Bbb S}[B\langle t \rangle^{\rm gp}] \xrightarrow{} {\Bbb S}[t] \otimes {\Bbb S}[B\langle t \rangle^{\rm gp}] \cong {\Bbb S}[B^{\rm rep}(\langle t \rangle)] \cong {\rm THH}({\Bbb S}[t], \langle t \rangle)\] of ${\Bbb E}_{\infty}$-rings. 

\begin{proposition}\label{prop:dlogcomp} The diagram \[\begin{tikzcd}{\Bbb S}[B\langle t \rangle^{\rm gp}] \ar{r}{t \mapsto x} \ar{d}{{\rm dlog}} & {\Bbb S}[{\rm BGL}_1(A[x^{-1}])] \ar{r} & K(A[x^{-1}]) \ar{d}{{\rm tr}} \\ {\rm THH}({\Bbb S}[t], \langle t \rangle) \ar{r}{\simeq} & {\rm THH}({\Bbb S}[t] | t) \ar{r}{t \mapsto x} & {\rm THH}(A | x)\end{tikzcd}\] of spectra commutes. 
\end{proposition}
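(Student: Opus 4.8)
The plan is to verify that the two ways around the rectangle agree as maps of spectra $\mathbb{S}[B\langle t \rangle^{\rm gp}] \to {\rm THH}(A | x)$. Since $\mathbb{S}[B\langle t \rangle^{\rm gp}] = \mathbb{S}[S^1] \simeq \mathbb{S} \oplus \mathbb{S}[1]$, such a map is determined by the image of the unit in $\pi_0{\rm THH}(A | x)$ together with the image in $\pi_1{\rm THH}(A | x)$ of the class $\sigma \in \pi_1\mathbb{S}[S^1]$ spanning the $\mathbb{S}[1]$-summand (the image of a generator of $\pi_1 B\langle t \rangle^{\rm gp}$). Each ingredient of the diagram is natural in $(A, x)$: the top composite $\mathbb{S}[B\langle t \rangle^{\rm gp}] \to K(A[x^{-1}])$ depends only on the unit $x \in A[x^{-1}]^{\times}$; the map ${\rm dlog}$ and the equivalence ${\rm THH}(\mathbb{S}[t], \langle t \rangle) \xrightarrow{\simeq} {\rm THH}(\mathbb{S}[t] | t)$ are independent of $(A, x)$; the map ${\rm THH}(\mathbb{S}[t] | t) \to {\rm THH}(A | x)$ is base-change (Lemma \ref{lem:transbasechangespec}); and ${\rm tr} \colon K(A[x^{-1}]) \to {\rm THH}(A | x)$ is natural, being the inverse of the d\'evissage equivalence $K({\rm Perf}(A | x)) \xrightarrow{\simeq} K(A[x^{-1}])$ — itself induced by the natural functor ${\rm Perf}(A | x) \to {\rm Perf}(A[x^{-1}])$ — followed by the natural trace $K({\rm Perf}(A | x)) \to {\rm THH}({\rm Perf}(A | x))$. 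As $(A, x)$ receives a map from $(\mathbb{Z}[t], t)$ along $\mathbb{Z}[t] \to A$, $t \mapsto x$, with $A \otimes^{\mathbb{L}}_{\mathbb{Z}[t]} \mathbb{Z} \simeq A/x$ since $x$ is a non-zero divisor, it suffices to treat the case $(A, x) = (\mathbb{Z}[t], t)$.

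For $(\mathbb{Z}[t], t)$ the $\pi_0$-comparison is immediate: both composites send the unit to $1 \in \pi_0{\rm THH}(\mathbb{Z}[t] | t) = \mathbb{Z}[t]$ (along the top, the unit maps to $[\mathbb{Z}[t^{\pm 1}]] \in K_0$ and then to its trace $1$; along the bottom, ${\rm dlog}$ and the equivalence are unital). For $\pi_1$ I would compose with the localization map ${\rm THH}(\mathbb{Z}[t] | t) \to {\rm THH}(\mathbb{Z}[t^{\pm 1}])$, which is injective on $\pi_1$: using ${\rm THH}(\mathbb{Z}[t] | t) \simeq {\rm THH}(\mathbb{Z}[t], \langle t \rangle)$ of Theorem \ref{thm:mainthm} (compatibly with the maps to ${\rm THH}(\mathbb{Z}[t^{\pm 1}])$, by the proof of Proposition \ref{prop:eqgradedcyc}) and $\pi_1{\rm THH}(\mathbb{Z}[t], \langle t \rangle) = \Omega^1_{(\mathbb{Z}[t], \langle t \rangle)} = \mathbb{Z}[t]\cdot{\rm dlog}(t)$, the map becomes $f(t)\,{\rm dlog}(t) \mapsto f(t)t^{-1}\,dt \in \Omega^1_{\mathbb{Z}[t^{\pm 1}]} = \pi_1{\rm THH}(\mathbb{Z}[t^{\pm 1}])$, which is injective. (One may instead compute $\pi_1{\rm THH}(\mathbb{Z}[t] | t)$ from the cofiber sequence ${\rm THH}(\mathbb{Z}) \to {\rm THH}(\mathbb{Z}[t]) \to {\rm THH}(\mathbb{Z}[t] | t)$ using $\pi_1{\rm THH}(\mathbb{Z}) = 0$ and that the transfer vanishes on $\pi_0$, since $[\mathbb{Z}] = 0$ in $K_0(\mathbb{Z}[t])$.) It therefore suffices to compare the images of $\sigma$ in $\pi_1{\rm THH}(\mathbb{Z}[t^{\pm 1}])$.

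For the top composite, I would first observe that ${\rm tr} \colon K(A[x^{-1}]) \to {\rm THH}(A | x)$ followed by the localization map ${\rm THH}(A | x) \to {\rm THH}(A[x^{-1}])$ is the ordinary cyclotomic trace of $A[x^{-1}]$ — because ${\rm Perf}(A | x) \to {\rm Perf}(A[x^{-1}])$ realizes the d\'evissage equivalence on $K$-theory and the trace is natural. Hence the top composite, followed by ${\rm THH}(\mathbb{Z}[t] | t) \to {\rm THH}(\mathbb{Z}[t^{\pm 1}])$, is $\mathbb{S}[B\langle t \rangle^{\rm gp}] \xrightarrow{t} K(\mathbb{Z}[t^{\pm 1}]) \xrightarrow{\rm tr} {\rm THH}(\mathbb{Z}[t^{\pm 1}])$ (with the first arrow classifying the unit $t$), sending $\sigma$ on $\pi_1$ to the Dennis trace of the symbol $\{t\} \in K_1(\mathbb{Z}[t^{\pm 1}])$, namely $t^{-1}\,dt$. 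For the bottom composite, Lemma \ref{lem:localization} and the proof of Proposition \ref{prop:eqgradedcyc} identify ${\rm THH}(\mathbb{S}[t], \langle t \rangle) \xrightarrow{\simeq} {\rm THH}(\mathbb{S}[t] | t) \to {\rm THH}(\mathbb{Z}[t^{\pm 1}])$ with the inclusion of non-negative weights ${\rm THH}(\mathbb{S}[t^{\pm 1}])_{\ge 0} \hookrightarrow {\rm THH}(\mathbb{S}[t^{\pm 1}])$ followed by base-change to ${\rm THH}(\mathbb{Z}[t^{\pm 1}])$; precomposing with ${\rm dlog}$ gives the standard ${\rm dlog}$-map $\mathbb{S}[B\langle t \rangle^{\rm gp}] \to {\rm THH}(\mathbb{Z}[t^{\pm 1}])$ induced by the space-level composite $B\langle t \rangle^{\rm gp} \to B^{\rm rep}(\langle t \rangle) \to B^{\rm cyc}(\langle t \rangle^{\rm gp})$, which on $\pi_1$ sends $\sigma$ to ${\rm dlog}(t) = t^{-1}\,dt$. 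The two images agree, so the two composites agree on $\sigma$; together with the $\pi_0$-statement this proves the rectangle commutes for $(\mathbb{Z}[t], t)$, hence in general.

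The main obstacle is the naturality bookkeeping behind the reduction to $(\mathbb{Z}[t], t)$ — concretely, checking that the d\'evissage equivalence $K({\rm Perf}(A | x)) \simeq K(A[x^{-1}])$, and thus ${\rm tr} \colon K(A[x^{-1}]) \to {\rm THH}(A | x)$, is natural in $(A, x)$, and that the induced map ${\rm THH}(A | x) \to {\rm THH}(B | y)$ coincides with base-change of ${\rm THH}$-modules. The remaining inputs — the Dennis trace of a symbol, the value of ${\rm dlog}$ on $\pi_1$, and the injectivity statement — are classical or follow directly from the earlier sections.
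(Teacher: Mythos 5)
Your proof is correct and follows a strategy that is genuinely similar in spirit to the paper's --- both reduce via naturality to a universal case and then postcompose with the localization to ${\rm THH}(-[t^{\pm 1}])$ --- but the two arguments diverge at several points worth noting. The paper reduces only to $(A[t],t)$ (the outer rectangle of \eqref{eq:dlogcompproofdiagram}) and then works entirely at the spectrum level: it uses Proposition \ref{prop:controllingf} and Lemma \ref{lem:transbasechangespec} to see that ${\rm THH}(A[t]\mid t)\to{\rm THH}(A[t^{\pm 1}])$ is a \emph{split} injection of summands, so that agreement after postcomposition propagates back without any homotopy-group analysis, and it invokes \cite[Proposition 2.5]{ABG18} to identify the trace composite. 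You push the reduction further to $(\Bbb Z[t],t)$ and then exploit the splitting ${\Bbb S}[B\langle t\rangle^{\rm gp}]\simeq{\Bbb S}\oplus{\Bbb S}[1]$ to convert the comparison of maps of spectra into a check on $\pi_0$ and $\pi_1$, where you only need injectivity of the localization map on $\pi_1$ (which you derive from the concrete formula $\pi_1{\rm THH}(\Bbb Z[t],\langle t\rangle)\cong\Omega^1_{(\Bbb Z[t],\langle t\rangle)}=\Bbb Z[t]\cdot{\rm dlog}(t)$). This is a more elementary route and buys you a very explicit verification, but it leans on Theorem~\ref{thm:mainthm} and Proposition~\ref{prop:eqgradedcyc}, and on the classical identification of the Dennis trace of a unit with $u^{-1}du$, in place of the spectrum-level ABG input. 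Both approaches require the same naturality bookkeeping for the d\'evissage equivalence and trace that you flag as the main obstacle; the paper does not escape this either (its right-hand square in \eqref{eq:dlogcompproofdiagram} uses exactly that naturality), so flagging it is fair but it does not represent a gap specific to your argument. The one small caveat is that you should confirm that $(\Bbb Z[t],t)$ really does satisfy the regularity hypotheses in force in this subsection --- it does, since $\Bbb Z[t]$ and $\Bbb Z$ are regular Noetherian --- and that all four sides of the square factor compatibly through ${\rm THH}(\Bbb Z[t]\mid t)\to{\rm THH}(A\mid x)$, which you sketch but do not fully spell out.
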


\begin{proof} The diagram in question is equivalent to the outer diagram in \begin{equation}\label{eq:dlogcompproofdiagram}\begin{tikzcd}{\Bbb S}[B\langle t \rangle^{\rm gp}] \ar{r} \ar{d}{{\rm dlog}} & {\Bbb S}[{\rm BGL}_1(A[t^{\pm 1}])] \ar{r} & K(A[t^{\pm 1}]) \ar{d}{{\rm tr}}\ar{r}{t \mapsto x} & K(A[x^{\pm 1}]) \ar{d}{{\rm tr}}\\ {\rm THH}({\Bbb S}[t], \langle t \rangle) \ar{r}{\simeq} & {\rm THH}({\Bbb S}[t] | t) \ar{r} & {\rm THH}(A[t] | t) \ar{r}{t \mapsto x} & {\rm THH}(A | x),\end{tikzcd}\end{equation} and so it suffices to prove that the left-hand square commutes. The composition involving ${\rm dlog}$ is equivalent to the composition \begin{equation}\label{eq:dlogcompproofdiagram2}\begin{tikzcd}[row sep = small]{\Bbb S}[B\langle t \rangle^{\rm gp}] \ar{r} & {\rm THH}(A) \otimes {\Bbb S}[B\langle t \rangle^{\rm gp}] \ar{r}{{\rm id} \otimes {\rm dlog}} &  {\rm THH}(A) \otimes {\rm THH}({\Bbb S}[t], \langle t \rangle) \ar{d}{\simeq} \\ \vspace{10 mm} & &  {\rm THH}(A) \otimes {\rm THH}({\Bbb S}[t] | t) \ar{d}{\cong} \\ \vspace{10 mm} & & {\rm THH}(A[t]) \otimes_{{\rm THH}({\Bbb S}[t])} {\rm THH}({\Bbb S}[t] | t) \ar{d}{\simeq} \ar[swap]{d}{\text{Lemma \ref{lem:transbasechangespec}}} \\ \vspace{10 mm} & &  {\rm THH}(A[t] | t).\end{tikzcd}\end{equation} In other words, it is equivalent to the map ${\Bbb S}[B\langle t \rangle^{\rm gp}] \to {\rm THH}(A) \otimes {\Bbb S}[B\langle t \rangle^{\rm gp}]$ induced by the unit of ${\rm THH}(A)$ followed by the inclusion of summands \[{\rm THH}(A) \oplus {\rm THH}(A)[1] \to  {\rm THH}(A) \oplus {\rm THH}(A)[1] \oplus {\rm THH}(A)[B^{\rm cyc}_{> 0}(\langle t \rangle^{\rm gp})],\] cf.\ \eqref{eq:susprepsplit}. We now wish to make the same statement about the composition in the left-hand diagram of \eqref{eq:dlogcompproofdiagram} involving the trace. For this, we postcompose this composition with ${\rm THH}(A[t] | t) \to {\rm THH}(A[t^{\pm 1}])$; that is, we consider the composition \[{\Bbb S}[B\langle t \rangle^{\rm gp}] \xrightarrow{} {{\Bbb S}[{\rm BGL}_1(A[t^{\pm 1}])]} \xrightarrow{} K(A[t^{\pm 1}]) \xrightarrow{{\rm tr}} {\rm THH}(A[t] | t) \xrightarrow{} {\rm THH}(A[t^{\pm 1}]),\] where the composition of the last two maps is by construction the usual trace $K(A[t^{\pm 1}]) \to {\rm THH}(A[t^{\pm 1}])$. By Antieau--Barthel--Gepner \cite[Proposition 2.5]{ABG18}, this is equivalent to the upper horizontal composition in the commutative diagram \[\begin{tikzpicture}[baseline= (a).base]
\node[scale=.88] (a) at (0,0){\begin{tikzcd}[column sep = small, row sep = small]{\Bbb S}[B\langle t \rangle^{\rm gp}]  \ar[swap,bend right = 3 mm]{ddrr}{\eqref{eq:dlogcompproofdiagram2}}\ar{r}{{\rm dlog}} & {\rm THH}({\Bbb S}[t], \langle t \rangle) \ar{r}{\simeq} & {\rm THH}({\Bbb S}[t] | t) \ar{r} \ar{d} &  {\rm THH}({\Bbb S}[t^{\pm 1}]) \ar{r} & {\rm THH}(A[t^{\pm 1}]). \\ \vspace{10 mm} & & {\rm THH}(A) \otimes {\rm THH}({\Bbb S}[t] | t) \ar{d}{\simeq} \\ \vspace{10 mm} & & {\rm THH}(A[t] | t) \ar[bend right = 3 mm]{uurr}\end{tikzcd}};\end{tikzpicture}\] Since the map ${\rm THH}(A[t] | t) \to {\rm THH}(A[t^{\pm 1}])$ is equivalent to the inclusion \[\begin{tikzcd}[row sep = small]{\rm THH}(A) \oplus {\rm THH}(A)[1] \oplus {\rm THH}(A)[B^{\rm cyc}_{> 0}(\langle t \rangle^{\rm gp})] \ar{d} \\ {\rm THH}(A) \oplus {\rm THH}(A)[1] \oplus {\rm THH}(A)[B^{\rm cyc}_{> 0}(\langle t \rangle^{\rm gp})] \oplus {\rm THH}(A)[B^{\rm cyc}_{< 0}(\langle t \rangle^{\rm gp})] \end{tikzcd}\] by Proposition \ref{prop:controllingf} and Lemma \ref{lem:transbasechangespec}, we find that the two compositions already agree in ${\rm THH}(A[t] | t)$ (as both are ${\Bbb S}[B\langle t \rangle^{\rm gp}] \to {\rm THH}(A) \otimes {\Bbb S}[B\langle t \rangle^{\rm gp}]$ followed by the inclusion), which concludes the proof. 
\end{proof}

\subsection{Recollections from \cite{HM03}}\label{subsec:hmrecall} Suppose now that ${\cal O}_K$ is a complete discrete valuation ring of mixed characteristic with perfect residue field $k = {\cal O}_K / \pi$ and fraction field $K = {\cal O}_K[\pi^{-1}]$. We will now recall the (output of the) construction of the term ${\rm THH}({\cal O}_K | K)$. 

For a commutative ring $A$, let ${\cal P}(A)$ denote the category of perfect complexes of $A$-modules. For $A = {\cal O}_K$, there are two natural choices of weak equivalences on ${\cal P}({\cal O}_K)$ that participate in the structure of a Waldhausen category: the quasi-isomorphisms $w{\cal P}({\cal O}_K)$, and the maps that become quasi-isomorphisms after base-change along ${\cal O}_K \to K$, denoted $v{\cal P}({\cal O}_K)$. Waldhausen's fibration theorem \cite[Theorem 1.6.4]{Wal85} gives the upper horizontal cofiber sequence in the diagram \begin{equation}\label{eq:waldcof}\begin{tikzpicture}[baseline= (a).base]
\node[scale=.94] (a) at (0,0){\begin{tikzcd}[column sep = tiny] K({\cal P}({\cal O}_K)^{v{\cal P}({\cal O}_K)}, w{\cal P}({\cal O}_K)) \ar{r} \ar{d}{{\rm tr}} & K({\cal P}({\cal O}_K), w{\cal P}({\cal O}_K)) \ar{r} \ar{d}{{\rm tr}} & K({\cal P}({\cal O}_K), v{\cal P}({\cal O}_K)) \ar{d}{{\rm tr}} \\ {\rm THH}({\cal P}({\cal O}_K)^{v{\cal P}({\cal O}_K)}, w{\cal P}({\cal O}_K)) \ar{r} & {\rm THH}({\cal P}({\cal O}_K), w{\cal P}({\cal O}_K)) \ar{r} & {\rm THH}({\cal P}({\cal O}_K), v{\cal P}({\cal O}_K)),\end{tikzcd}};\end{tikzpicture}\end{equation} where ${\cal P}({\cal O}_K)^{v{\cal P}({\cal O}_K)}$ is the full subcategory of ${\cal P}({\cal O}_K)$ consisting of those complexes that are $v{\cal P}({\cal O}_K)$-acyclic; that is, those that become acyclic after base-change along ${\cal O}_K \to K$. Waldhausen's approximation theorem \cite[Theorem 1.6.7]{Wal85} implies that extension of scalars induces an equivalence $K({\cal P}({\cal O}_K), v{\cal P}({\cal O}_K)) \xrightarrow{} K({\cal P}(K), w{\cal P}(K)) \simeq K(K)$, while d\'evissage implies that restriction of scalars induces an equivalence $K(k) \simeq K({\cal P}(k), w{\cal P}(k)) \to K({\cal P}({\cal O}_K)^{v{\cal P}({\cal O}_K)}, w{\cal P}({\cal O}_K))$. This identifies the upper horizontal cofiber sequence of \eqref{eq:waldcof} with the usual cofiber sequence $K(k) \xrightarrow{K(p_*)} K({\cal O}_K) \xrightarrow{} K(K)$ induced by the Karoubi sequence \[{\rm Perf}({\cal O}_K)^{\pi-{\rm nil}} \xrightarrow{} {\rm Perf}({\cal O}_K) \xrightarrow{} {\rm Perf}(K).\] We now discuss the lower horizontal sequence in \eqref{eq:waldcof}. The terms are defined in \cite[Section 1.2]{HM03}, in which Hesselholt--Madsen associate to any Waldhausen category enriched in abelian groups a spectrally enriched category, of which we can form the topological Hochschild homology using the usual cyclic nerve. This variant of ${\rm THH}$ comes with a version of the fibration theorem \cite[Theorem 1.3.11]{HM03} and d\'evissage \cite[Theorem 1]{Dun98} (see Remark \ref{rem:devissageweird}), but no approximation theorem. In conclusion, the diagram \eqref{eq:waldcof} takes the form  \begin{equation}\label{eq:waldcof2}\begin{tikzcd} K(k) \ar{r}{K(p_*)} \ar{d}{{\rm tr}} & K({\cal O}_K) \ar{r} \ar{d}{{\rm tr}} & K(K) \ar{d}{{\rm tr}} \\ {\rm THH}(k) \ar{r}{{\rm THH}(p_*)} & {\rm THH}({\cal O}_K) \ar{r} & {\rm THH}({\cal O}_K | K),\end{tikzcd}\end{equation} where ${\rm THH}({\cal O}_K | K) := {\rm THH}({\cal P}({\cal O}_K), v{\cal P}({\cal O}_K))$.

\begin{remark}\label{rem:devissageweird} While the bottom horizontal cofiber sequence in \eqref{eq:waldcof2} comes to life from d\'evissage, it is not the case that the localizing invariant ${\rm THH}$ satisfies d\'evissage. We now flesh out this potentially confusing discrepancy. Throughout this remark, we sweep some details of the setup of \cite{BM20} under the rug so that the point of the discussion is not lost in technicalities. 

Any stable $\infty$-category ${\cal C}$ is naturally enriched in spectra, leading to the usual definition of ${\rm THH}({\cal C})$ as the realization of the cyclic nerve. There is also a different choice of enrichment, as explained in \cite[Section 2.2]{BM20}: One may define ${\rm map}_{{\cal C}}^{\Gamma}(x, y)$ to be the connective spectrum associated to the Segal $\Gamma$-space with $n$th space ${\rm Map}_{\cal C}(x, \bigoplus_{i = 1}^n y)$. Forming the cyclic nerve with respect to the resulting spectral enrichment, one obtains an alternative definition ${\rm THH}^\Gamma({\cal C})$. This choice is in fact rather natural; for example, following this recipe in the context of exact categories gives exactly the spectral enrichment considered by e.g.\ Dundas--McCarthy \cite{DM96} and Hesselholt--Madsen \cite{HM03} by \cite[Example 2.13]{BM20}. 

If $R$ is a \emph{connective} ${\Bbb E}_{\infty}$-ring, then ${\rm THH}^\Gamma({\rm Perf}(R)) \xrightarrow{\simeq} {\rm THH}({\rm Perf}(R))$ by \cite[Theorem 2.56]{BM20}. But the categories that naturally appear when studying the localization properties for localizing invariants very quickly forces you out of the connective setting: For example, in the diagram \begin{equation}\label{eq:nonconndiag}\begin{tikzcd}{\rm THH}^\Gamma({\rm Perf}({\Bbb Z})^{p - {\rm nil}}) \ar{r} \ar{d} & {\rm THH}^\Gamma({\rm Perf}({\Bbb Z})) \ar{d}{\simeq} \\ {\rm THH}({\rm Perf}({\Bbb Z})^{p - {\rm nil}}) \ar{r} & {\rm THH}({\rm Perf}({\Bbb Z})),\end{tikzcd}\end{equation} only the right-hand vertical map is an equivalence. Indeed, Schwede--Shipley \cite[Theorem 3.3]{SS03} applies to obtain an equivalence ${\rm Perf}({\Bbb Z})^{p - {\rm nil}} \simeq {\rm Perf}({\rm End}_{{\Bbb Z}}({\Bbb Z}/p))$, which are compact modules over a non-connective ring spectrum with homotopy groups as ${\Bbb Z}/p$ in degrees $-1$ and $0$. In this situation, \cite[Theorem 1]{Dun98} applies to identify \[{\rm THH}({\Bbb F}_p) \xleftarrow{\simeq} {\rm THH}^\Gamma({\rm Perf}({\Bbb F}_p)) \xrightarrow{\simeq} {\rm THH}^\Gamma({\rm Perf}({\Bbb Z})^{p - {\rm nil}}).\] The resulting cofiber of the top horizontal map in \eqref{eq:nonconndiag} is modelled as the ${\rm THH}^\Gamma$ of a Waldhausen category in \cite{HM03}, which differs from ${\rm THH}({\Bbb Z}[1/p])$. 
\end{remark}

\subsection{Constructing the comparison map} We now have two models for the cofiber of the transfer map ${\rm THH}(p_*) \colon {\rm THH}(k) \to {\rm THH}({\cal O}_K)$, namely the abstractly defined cofiber ${\rm THH}({\cal O}_K | \pi)$ and the Hesselholt--Madsen term ${\rm THH}({\cal O}_K | K)$. By virtue of its definition in terms of the Waldhausen category $({\cal P}({\cal O}_K), v{\cal P}({\cal O}_K))$, the symmetric monoidal structure of ${\cal P}({\cal O}_K)$ endows ${\rm THH}({\cal O}_K | K)$ with the structure of an ${\Bbb E}_{\infty}$-algebra in cyclotomic spectra. We construct a map \begin{equation}\label{eq:tohm}{\rm THH}({\cal O}_K, \langle \pi \rangle) \to {\rm THH}({\cal O}_K | K)\end{equation} of ${\rm THH}({\cal O}_K)$-modules in cyclotomic spectra by base-changing the map \[{\Bbb S}[B^{\rm rep}(\langle t \rangle)] \xrightarrow{\simeq} {\rm THH}({\Bbb S}[t] | t) \xrightarrow{t \mapsto \pi} {\rm THH}({\cal O}_K | K)\] of ${\Bbb S}[B^{\rm cyc}(\langle t \rangle)]$-modules in cyclotomic spectra along ${\Bbb S}[B^{\rm cyc}(\langle t \rangle)] \xrightarrow{t \mapsto \pi} {\rm THH}({\cal O}_K)$. 

\begin{proposition}\label{prop:dgrings} The map $\pi_*{\rm THH}({\cal O}_K, \langle \pi \rangle) \xrightarrow{\pi_*\eqref{eq:tohm}} \pi_*{\rm THH}({\cal O}_K | K)$ is one of differential graded rings. 
\end{proposition}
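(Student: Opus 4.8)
The plan is to split the statement into multiplicativity of $\pi_*\eqref{eq:tohm}$ and its compatibility with the differentials, and to dispatch the differential part at once: the map \eqref{eq:tohm} is a morphism of cyclotomic spectra, hence $S^1$-equivariant, and on each side the operator $\delta$ is by definition induced by the $S^1$-action, so $\pi_*\eqref{eq:tohm}$ intertwines the two differentials. It is convenient to also observe immediately that \eqref{eq:tohm} is a morphism \emph{under} ${\rm THH}({\cal O}_K)$: precomposing with ${\rm THH}({\cal O}_K)\to{\rm THH}({\cal O}_K, \langle \pi \rangle)$ recovers the localization map ${\rm THH}({\cal O}_K)\to{\rm THH}({\cal O}_K|K)$, so that $\pi_*\eqref{eq:tohm}$ is automatically unital and $\pi_*{\rm THH}({\cal O}_K)$-linear. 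Only multiplicativity remains.

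For multiplicativity I would exhibit a genuine morphism of ${\mathbb E}_\infty$-rings $\Phi\colon{\rm THH}({\cal O}_K, \langle \pi \rangle)\to{\rm THH}({\cal O}_K|K)$ and show that it induces $\pi_*\eqref{eq:tohm}$ on homotopy. Recall from Definition \ref{def:logthh} that ${\rm THH}({\cal O}_K, \langle \pi \rangle)\simeq{\rm THH}({\cal O}_K)\otimes_{{\rm THH}({\mathbb S}[t])}{\mathbb S}[B^{\rm rep}(\langle t \rangle)]$ is a pushout of ${\mathbb E}_\infty$-rings, and that ${\mathbb S}[B^{\rm rep}(\langle t \rangle)]\simeq{\mathbb S}[t]\otimes{\mathbb S}[B\langle t \rangle^{\rm gp}]$. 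The localization map ${\rm THH}({\cal O}_K)\to{\rm THH}({\cal O}_K|K)$ is a morphism of ${\mathbb E}_\infty$-rings, being Hesselholt--Madsen's ${\rm THH}$ applied to the symmetric monoidal functor $({\cal P}({\cal O}_K), w{\cal P}({\cal O}_K))\to({\cal P}({\cal O}_K), v{\cal P}({\cal O}_K))$; and the composite ${\mathbb S}[B\langle \pi \rangle^{\rm gp}]\xrightarrow{{\rm dlog}_K}K({\cal P}({\cal O}_K), v{\cal P}({\cal O}_K))\xrightarrow{{\rm tr}}{\rm THH}({\cal O}_K|K)$ is likewise ${\mathbb E}_\infty$. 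Together with $t\mapsto\pi$ these assemble to an ${\mathbb E}_\infty$-ring map ${\mathbb S}[B^{\rm rep}(\langle t \rangle)]\to{\rm THH}({\cal O}_K|K)$, and the compatibility needed to descend it and the localization map to the pushout --- that the two resulting maps out of ${\rm THH}({\mathbb S}[t])={\mathbb S}[B^{\rm cyc}(\langle t \rangle)]$ coincide --- is exactly a dlog-identity of the type proved in Proposition \ref{prop:dlogcomp}, now applied with $A={\cal O}_K$ and $t\mapsto\pi$. This produces $\Phi$.

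The final and genuinely hard step is the identification $\pi_*\Phi=\pi_*\eqref{eq:tohm}$. Both maps are ${\rm THH}({\cal O}_K)$-module maps under ${\rm THH}({\cal O}_K)$, so their difference vanishes on the unit and factors through the cofiber ${\rm THH}(k)[1]$ of ${\rm THH}({\cal O}_K)\to{\rm THH}({\cal O}_K, \langle \pi \rangle)$; moreover both recover the residue $\partial^{\rm rep}$ after postcomposition with the boundary $\partial\colon{\rm THH}({\cal O}_K|K)\to{\rm THH}(k)[1]$ --- for \eqref{eq:tohm} this is Theorem \ref{thm:mainthm}, and for $\Phi$ it is forced by the dlog-identity used to build it --- so the difference lifts still further to a ${\rm THH}({\cal O}_K)$-module map ${\rm THH}(k)[1]\to{\rm THH}({\cal O}_K)=\mathrm{fib}(\partial)$. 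One then checks that this lift is trivial on homotopy, for instance by postcomposing with ${\rm THH}({\cal O}_K)\to{\rm THH}(K)$ --- which kills ${\rm THH}(k)$ --- together with an analysis via Lemma \ref{lem:mappingspectramodules} of ${\rm THH}({\cal O}_K)$-linear maps out of ${\rm THH}(k)$, whence $\pi_*\eqref{eq:tohm}=\pi_*\Phi$ is a ring map. The obstacle is concentrated in this last comparison, and in the input from Proposition \ref{prop:dlogcomp} that pins down the common value of the two maps on ${\rm dlog}(\pi)$; a naive reduction to algebra generators is not available, since $\pi_*{\rm THH}({\cal O}_K, \langle \pi \rangle)$ requires generators over $\pi_*{\rm THH}({\cal O}_K)$ in arbitrarily high degree (the lifts of the polynomial generators of $\pi_*{\rm THH}(k)$).
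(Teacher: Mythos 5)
Your treatment of the differential matches the paper's: since $\eqref{eq:tohm}$ is $S^1$-equivariant, the operator $\delta$ induced by the $S^1$-action is intertwined. For multiplicativity, however, you take a genuinely different and more circuitous route. The paper does \emph{not} construct an auxiliary ${\Bbb E}_\infty$-ring map $\Phi$ and then compare: it shows directly that the map $\eqref{eq:inducedstalg} \colon {\Bbb S}[B^{\rm rep}(\langle t \rangle)] \to {\rm THH}({\cal O}_K | K)$ is a map of homotopy commutative ring spectra, by regarding it as an ${\Bbb S}[t]$-module map (restricting along the inclusion of $0$-simplices ${\Bbb S}[t]\to{\Bbb S}[B^{\rm cyc}(\langle t\rangle)]$), identifying its adjoint with $\eqref{eq:fromreptohm1}$, and then invoking Proposition \ref{prop:dlogcomp} to rewrite that adjoint as the visibly multiplicative composite ${\Bbb S}[B\langle t\rangle^{\rm gp}] \to {\Bbb S}[BGL_1(K)] \to K(K) \to {\rm THH}({\cal O}_K|K)$. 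The base-change $\eqref{eq:tohm}$ is then a homotopy ring map because it is assembled from such, and multiplicativity on $\pi_*$ is immediate --- no second map, no comparison step.

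Your route leaves two real gaps. First, gluing $\Phi$ as an ${\Bbb E}_\infty$-map out of the pushout requires the two composites out of ${\Bbb S}[B^{\rm cyc}(\langle t\rangle)]$ to agree as a point in the \emph{space} of ${\Bbb E}_\infty$-maps (and requires an ${\Bbb E}_\infty$-structure on the dlog/trace composite); Proposition \ref{prop:dlogcomp} only provides a homotopy of maps of spectra, which is strictly weaker --- the paper deliberately works only at the level of homotopy commutative rings for exactly this reason. Second, and more seriously, the concluding identification $\pi_*\Phi=\pi_*\eqref{eq:tohm}$ is not settled by your sketch. The difference is governed by a ${\rm THH}({\cal O}_K)$-module map $g\colon{\rm THH}(k)[1]\to{\rm THH}({\cal O}_K)$, and the relevant mapping spectrum is not trivial: the transfer ${\rm THH}(p_*)\colon {\rm THH}(k)\to{\rm THH}({\cal O}_K)$ is itself a non-zero ${\rm THH}({\cal O}_K)$-linear map by Lemma \ref{lem:projectionformula}. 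Postcomposing $g$ with ${\rm THH}({\cal O}_K)\to{\rm THH}(K)$ only records its behaviour after inverting $\pi$ and cannot detect $g$ on the $\pi$-power torsion, which is exactly where ${\rm THH}(k)$ lives. The step you flag as ``genuinely hard'' is indeed a gap; the paper's strategy of extracting multiplicativity from the very definition of $\eqref{eq:tohm}$ avoids it entirely.
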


\begin{proof} Since both the source and target of \eqref{eq:tohm} are ${\Bbb E}_{\infty}$-algebras in cyclotomic spectra, and in particular ${\Bbb E}_{\infty}$-algebras in ${\rm Sp}^{BS^1}$, the degree-increasing differential $\delta$ induced by the $S^1$-action exhibits both as differential graded rings \cite[Section 1.4]{Hes96}. We have here used that both ${\rm THH}({\cal O}_K, \langle \pi \rangle)$ and ${\rm THH}({\cal O}_K | K)$ are ${\cal O}_K$-algebras to ensure that the differentials square to zero. Since \eqref{eq:tohm} is a map of cyclotomic spectra, we have that the square \[\begin{tikzcd}\pi_*{\rm THH}({\cal O}_K, \langle \pi \rangle) \ar{r}{\pi_*\eqref{eq:tohm}} \ar{d}{\delta} & \pi_*{\rm THH}({\cal O}_K | K) \ar{d}{\delta}\\ \pi_{* + 1}{\rm THH}({\cal O}_K, \langle \pi \rangle) \ar{r}{\pi_{* + 1}\eqref{eq:tohm}} & \pi_{* + 1}{\rm THH}({\cal O}_K | K) \end{tikzcd}\] commutes. It only remains to show that $\pi_*\eqref{eq:tohm}$ is a map of graded commutative rings. For this, let us consider the morphism \begin{equation}\label{eq:inducedstalg}{\Bbb S}[t] \otimes {\Bbb S}[B\langle t \rangle^{\rm gp}] \cong {\Bbb S}[B^{\rm rep}(\langle t \rangle)] \xrightarrow{\simeq} {\rm THH}({\Bbb S}[t] | t) \xrightarrow{t \mapsto \pi} {\rm THH}({\cal O}_K | K)\end{equation} of ${\Bbb S}[B^{\rm cyc}(\langle t \rangle)]$-modules as a ${\Bbb S}[t]$-module map by restriction of scalars along the inclusion of $0$-simplices ${\Bbb S}[t] \to {\Bbb S}[B^{\rm cyc}(\langle t \rangle)]$. This ${\Bbb S}[t]$-module map is adjoint to the map of spectra \begin{equation}\label{eq:fromreptohm1}{\Bbb S}[B\langle t \rangle^{\rm gp}] \xrightarrow{{\rm dlog}} {\Bbb S}[B^{\rm rep}(\langle t \rangle)] \xrightarrow{\simeq} {\rm THH}({\Bbb S}[t] | t) \xrightarrow{t \mapsto \pi} {\rm THH}({\cal O}_K | K).\end{equation} By Proposition \ref{prop:dlogcomp}, this is equivalent to the composition \[{\Bbb S}[B\langle t \rangle^{\rm gp}] \xrightarrow{t \mapsto \pi} {\Bbb S}[B{\rm GL}_1(K)] \xrightarrow{} K(K) \xrightarrow{{\rm tr}} {\rm THH}({\cal O}_K | K)\] of (homotopy) commutative ring spectra. Since \eqref{eq:inducedstalg} is defined by extension of scalars along the map ${\Bbb S}[t] \to {\Bbb S}[B^{\rm cyc}(\langle t \rangle)] \xrightarrow{t \mapsto \pi} {\rm THH}({\cal O}_K) \xrightarrow{} {\rm THH}({\cal O}_K | K)$ of homotopy commutative ring spectra, it is a map of homotopy commutative ring spectra. Hence this is also the case of the base-change \eqref{eq:tohm} along the multiplicative map ${\rm THH}({\Bbb S}[t]) \xrightarrow{t \mapsto \pi} {\rm THH}({\cal O}_K)$, and in particular $\pi_*\eqref{eq:tohm}$ is a map of graded commutative rings, as claimed. 
\end{proof}

\begin{remark}\label{rem:awkward} The proof does \emph{not} show that \eqref{eq:tohm} is an equivalence of ${\Bbb E}_{\infty}$-algebras in cyclotomic spectra. While we strongly expect that this is the case, we abandon the $S^1$-action as soon as we make reference to the non-equivariant map ${\Bbb S}[t] \xrightarrow{} {\Bbb S}[B^{\rm cyc}(\langle t \rangle)]$. In our attempts to address this issue, the core difficulty lies with a lack of understanding of the mapping space ${\rm Map}_{{\rm CAlg}^{BS^1}}({\Bbb S}[B^{\rm rep}(\langle t \rangle)], {\rm THH}({\cal O}_K | K))$ and in particular its relation to the mapping space ${\rm Map}_{{\rm CAlg}^{BS^1}}({\Bbb S}[B\langle t \rangle^{\rm gp}], {\rm THH}({\cal O}_K | K))$. These difficulties are in stark contrast to the non-logarithmic situation, where there is an equivalence of mapping spaces \[{\rm Map}_{{\rm CAlg}^{BS^1}}({\Bbb S}[B^{\rm cyc}(\langle t \rangle)], {\rm THH}({\cal O}_K | K)) \simeq {\rm Map}_{{\rm CAlg}}({\Bbb S}[t], {\rm THH}({\cal O}_K | K))\] by McClure--Schw\"anzl--Vogt \cite{MSV97}. 

Nonetheless, one may use ${\rm THH}({\cal O}_K, \langle \pi \rangle)$ as a model to reproduce the spectacular result \cite[Theorem C]{HM03} relating logarithmic ${\rm TR}$ to the log de Rham--Witt complex. By inspection, one identifies \cite[p.\ 43]{HM03} as the crucial point in the argument of \emph{loc.\ cit.}\ where an explicit property of the transfer map ${\rm THH}(p_*)$ is used (as opposed to merely having a cofiber sequence with the correct terms). A suitable replacement for this property is the following: For a finite extension $K \subset L$ with ramification index $e_{L / K}$, the diagram \[\begin{tikzcd}[row sep = small]{\rm THH}({\cal O}_K, \langle \pi_K \rangle) \ar{d} \ar{r}{\partial^{\rm rep}} & {\rm THH}({\cal O}_K / \pi_K)[1] \ar{d}{{\rm THH}(i)[1] \cdot e_{L / K}} \\  {\rm THH}({\cal O}_L, \langle \pi_L \rangle) \ar{r}{\partial^{\rm rep}} & {\rm THH}({\cal O}_L / \pi_L)[1]\end{tikzcd}\] commutes, and recovers the diagram \[\begin{tikzcd}[row sep = small]\Omega^1_{({\cal O}_K, \langle \pi_K \rangle)} \ar{d} \ar{r}{} & {\cal O}_K / \pi_K \ar{d}{i \cdot e_{L / K}} \\  \Omega^1_{({\cal O}_L, \langle \pi_L \rangle)} \ar{r}{} & {\cal O}_L / \pi_L\end{tikzcd}\] on $\pi_1$, where $i \colon {\cal O}_K / \pi_K \to {\cal O}_L / \pi_L$ denotes the induced map of residue fields. Using this, one can mimic the argument of \cite{HM03} with ${\rm THH}({\cal O}_K, \langle \pi_K \rangle)$ in place of ${\rm THH}({\cal O}_K | K)$. We leave the details to the interested reader. 
\end{remark}

\subsection{The log differential graded ring $\pi_*{\rm THH}({\cal O}_K | K)$} So far, we have constructed an explicit equivalence ${\rm THH}({\cal O}_K, \langle \pi \rangle) \xrightarrow{\simeq} {\rm THH}({\cal O}_K | K)$ of ${\rm THH}({\cal O}_K)$-modules in cyclotomic spectra. By Proposition \ref{prop:dgrings}, the induced map \begin{equation}\label{eq:logdiffrings}\pi_*{\rm THH}({\cal O}_K, \langle \pi \rangle) \xrightarrow{\cong} \pi_*{\rm THH}({\cal O}_K | K)\end{equation} is an isomorphism of differential graded rings. 

We now aim to show that the map \eqref{eq:logdiffrings} preserves one final piece of structure, namely that of a log differential graded ring (cf.\ Definition \ref{def:logdiff}). We determined this structure on $\pi_*{\rm THH}({\cal O}_K, \langle \pi \rangle)$ in Proposition \ref{prop:logthhlogdiff}, and we will now recall from \cite[Proof of Proposition 2.3.1]{HM03} how it comes to life on $\pi_*{\rm THH}({\cal O}_K | K)$. 

We first observe that Hesselholt--Madsen phrase the log differentials as an invariant of log rings (as opposed to pre-log rings), and so they are concerned with $\Omega^1_{({\cal O}_K, {\cal O}_K \cap {\rm GL}_1(K))}$. By \cite[(1.7)]{Kat89}, the map $\Omega^1_{({\cal O}_K, \langle \pi \rangle)} \xrightarrow{} \Omega^1_{({\cal O}_K, {\cal O}_K \cap {\rm GL}_1(K))}$ is an isomorphism. As explained in the last paragraph of \cite[p.\ 37]{HM03}, one defines the morphism ${\rm dlog} \colon {\cal O}_K \cap {\rm GL}_1(K) \to \pi_1{\rm THH}({\cal O}_K | K)$ as a composite \[\begin{tikzcd}{\cal O}_K \cap {\rm GL}_1(K) \ar[bend right = 5 mm]{rrd}{{\rm dlog}} \ar{r} & \pi_1B({\cal O}_K \cap {\rm GL}_1(K)) \ar{r}{\pi_1\widetilde{{\rm det}}}  & \pi_1K({\cal P}({\cal O}_K), v{\cal P}({\cal O}_K)) \ar{d}{{\rm tr}} \\ \vspace{10 mm} & &  \pi_1{\rm THH}({\cal O}_K | K),\end{tikzcd}\] where $({\cal P}({\cal O}_K), v{\cal P}({\cal O}_K))$ is the Waldhausen category of perfect ${\cal O}_K$-modules with weak equivalences those maps that become equivalences after base-change along ${\cal O}_K \to K$, ${\rm tr}$ is the trace map to the ${\rm THH}$ of this Waldhausen category (which is how Hesselholt--Madsen define ${\rm THH}({\cal O}_K | K)$). 

We proceed to describe the map $\widetilde{{\rm det}}$. For any object $X$ of a Waldhausen category ${\cal C}$, we may define a map \[\widetilde{\rm det} \colon \Sigma^\infty B{\rm Aut}(X) \to K({\cal C}),\] as explained in \cite[p.\ 37]{HM03}. Here ${\rm Aut}(X)$ is defined to be the \emph{endo}morphisms in the category of weak equivalences in ${\cal C}$. In the example at hand we set $X = {\cal O}_K[0]$, so that ${\rm Aut}(X) = {\cal O}_K \cap {\rm GL}_1(K)$ since weak equivalences are checked after base-change along ${\cal O}_K \to K$. In the example where ${\cal C} = {\cal P}(K)$ is the category of perfect $K$-modules (with its usual weak equivalences) and $X = K[0]$, then ${\rm Aut}(X) = {\rm GL}_1(K)$ and the map $\widetilde{{\rm det}}$ is the usual map $\Sigma^\infty {\rm BGL}_1(K) \to {\Bbb S}[{\rm BGL}_1(K)] \to K(K)$. 

\begin{proof}[Proof of Theorem \ref{thm:hmcomp}] It only remains to prove that the map \eqref{eq:logdiffrings} is a map of log differential graded rings. Let us write ${\cal C} := ({\cal P}({\cal O}_K), v{\cal P}({\cal O}_K))$ for brevity. We contemplate the diagram  \[\begin{tikzcd}[column sep = small]{\cal O}_K \cap {\rm GL}_1(K) \ar{r} &  \pi_1B({\cal O}_K \cap {\rm GL}_1(K)) \ar{d} \ar{r}{\pi_1\widetilde{{\rm det}}} & \pi_1K({\cal C}) \ar{r}{\pi_1{\rm tr}} \ar{d}{\cong}& \pi_1{\rm THH}({\cal O}_K | K) \\ \vspace{10 mm} & \pi_1{\rm BGL}_1(K) \ar{r}{\pi_1\widetilde{\rm det}} & \pi_1K(K) \ar[swap]{ur}{\pi_1{\rm tr}} \\ \langle t \rangle \ar{uu}{t \mapsto \pi_K} \ar{r} & \pi_1B\langle t \rangle^{\rm gp} \ar{u}{t \mapsto \pi_K} \ar{rr}{\rm dlog} & & \pi_1{\rm THH}({\cal O}_K, \langle \pi \rangle) \ar[swap]{uu}{\pi_1\eqref{eq:tohm}} \end{tikzcd}\] of commutative monoids, where the isomorphism is provided by Waldhausen's approximation theorem \cite[Theorem 1.6.7]{Wal85}. We claim that this commutes by construction: For the left-hand square this is clear. For the rest, we observe that the composition \[\pi_1B\langle t \rangle^{\rm gp} \xrightarrow{{\rm dlog}} \pi_1{\rm THH}({\cal O}_K, \langle \pi \rangle) \xrightarrow{\pi_1\eqref{eq:tohm}} \pi_1{\rm THH}({\cal O}_K | K)\] is defined to be $\pi_1$ of the composition \eqref{eq:fromreptohm1}, and so the lower, right-hand square\footnote{With corners $\pi_1B\langle t \rangle^{\rm gp}$, $\pi_1{\rm BGL}_1(K)$, $\pi_1{\rm THH}({\cal O}_K, \langle \pi \rangle)$, and $\pi_1{\rm THH}({\cal O}_K | K)$.} commutes by Proposition \ref{prop:dlogcomp}. The square involving $\widetilde{{\rm det}}$ commutes, and the triangle involving the trace commutes by construction. This concludes the proof. 
\end{proof}

\begin{bibdiv}
\begin{biblist}

\bib{ABG18}{article}{
   author={Antieau, Benjamin},
   author={Barthel, Tobias},
   author={Gepner, David},
   title={On localization sequences in the algebraic $K$-theory of ring
   spectra},
   journal={J. Eur. Math. Soc. (JEMS)},
   volume={20},
   date={2018},
   number={2},
   pages={459--487},
   issn={1435-9855},
   review={\MR{3760300}},
   doi={10.4171/JEMS/771},
}

\bib{AGH19}{article}{
   author={Antieau, Benjamin},
   author={Gepner, David},
   author={Heller, Jeremiah},
   title={$K$-theoretic obstructions to bounded $t$-structures},
   journal={Invent. Math.},
   volume={216},
   date={2019},
   number={1},
   pages={241--300},
   issn={0020-9910},
   review={\MR{3935042}},
   doi={10.1007/s00222-018-00847-0},
}

\bib{AMMN22}{article}{
   author={Antieau, Benjamin},
   author={Mathew, Akhil},
   author={Morrow, Matthew},
   author={Nikolaus, Thomas},
   title={On the Beilinson fiber square},
   journal={Duke Math. J.},
   volume={171},
   date={2022},
   number={18},
   pages={3707--3806},
   issn={0012-7094},
   review={\MR{4516307}},
   doi={10.1215/00127094-2022-0037},
}

\bib{ABM23}{misc}{
      author={Ausoni, C.},
	author={Bay{\i}nd{\i}r, H.},
	author={Moulinos, T.},
       title={Adjunction of roots, algebraic K-theory and chromatic redshift},
        date={2023},
        note={\arxivlink{2211.16929}},
}

\bib{Bar15}{article}{
   author={Barwick, Clark},
   title={On exact $\infty$-categories and the theorem of the heart},
   journal={Compos. Math.},
   volume={151},
   date={2015},
   number={11},
   pages={2160--2186},
   issn={0010-437X},
   review={\MR{3427577}},
   doi={10.1112/S0010437X15007447},
}

\bib{BL14}{misc}{
      author={Barwick, C.},
	author={Lawson, T.},
       title={Regularity of structured ring spectra and localization in K-theory},
        date={2014},
        note={\arxivlink{1402.6038}},
}

\bib{Bay24}{article}{
   author={Bay\i nd\i r, Haldun \"Ozg\"ur},
   title={Algebraic $K$-theory of the two-periodic first Morava $K$-theory},
   journal={Trans. Amer. Math. Soc.},
   volume={377},
   date={2024},
   number={7},
   pages={5219--5246},
   issn={0002-9947},
   review={\MR{4778073}},
   doi={10.1090/tran/9178},
}

\bib{BGT13}{article}{
   author={Blumberg, Andrew J.},
   author={Gepner, David},
   author={Tabuada, Gon\c calo},
   title={A universal characterization of higher algebraic $K$-theory},
   journal={Geom. Topol.},
   volume={17},
   date={2013},
   number={2},
   pages={733--838},
   issn={1465-3060},
   review={\MR{3070515}},
   doi={10.2140/gt.2013.17.733},
}

\bib{BGT14}{article}{
   author={Blumberg, Andrew J.},
   author={Gepner, David},
   author={Tabuada, Gon\c calo},
   title={Uniqueness of the multiplicative cyclotomic trace},
   journal={Adv. Math.},
   volume={260},
   date={2014},
   pages={191--232},
   issn={0001-8708},
   review={\MR{3209352}},
   doi={10.1016/j.aim.2014.02.004},
}

\bib{BM08}{article}{
   author={Blumberg, Andrew J.},
   author={Mandell, Michael A.},
   title={The localization sequence for the algebraic $K$-theory of
   topological $K$-theory},
   journal={Acta Math.},
   volume={200},
   date={2008},
   number={2},
   pages={155--179},
   issn={0001-5962},
   review={\MR{2413133}},
   doi={10.1007/s11511-008-0025-4},
}

\bib{BLMP24}{misc}{
      author={Binda, F.},
	author={Lundemo, T.},
	author={Merici, A.},
	author={Park, D.},
       title={Logarithmic prismatic cohomology, motivic sheaves, and comparison theorems},
        date={2024},
        note={\arxivlink{2312.13129}},
}

\bib{BLMP24Root}{misc}{
      author={Binda, F.},
	author={Lundemo, T.},
	author={Merici, A.},
	author={Park, D.},
       title={Logarithmic TC via the Infinite Root Stack and the Beilinson Fiber Square},
        date={2024},
        note={\arxivlink{2408.15627}},
}

\bib{BLPO23}{article}{
   author={Binda, Federico},
   author={Lundemo, Tommy},
   author={Park, Doosung},
   author={\O stv\ae r, Paul Arne},
   title={A Hochschild-Kostant-Rosenberg theorem and residue sequences for
   logarithmic Hochschild homology},
   journal={Adv. Math.},
   volume={435},
   date={2023},
   pages={Paper No. 109354, 66},
   issn={0001-8708},
   review={\MR{4659233}},
   doi={10.1016/j.aim.2023.109354},
}

\bib{BLPO23Prism}{article}{
   author={Binda, Federico},
   author={Lundemo, Tommy},
   author={Park, Doosung},
   author={\O stv\ae r, Paul Arne},
   title={Logarithmic prismatic cohomology via logarithmic THH},
   journal={Int. Math. Res. Not. IMRN},
   date={2023},
   number={22},
   pages={19641--19696},
   issn={1073-7928},
   review={\MR{4669811}},
   doi={10.1093/imrn/rnad224},
}

\bib{BMS19}{article}{
   author={Bhatt, Bhargav},
   author={Morrow, Matthew},
   author={Scholze, Peter},
   title={Topological Hochschild homology and integral $p$-adic Hodge
   theory},
   journal={Publ. Math. Inst. Hautes \'Etudes Sci.},
   volume={129},
   date={2019},
   pages={199--310},
   issn={0073-8301},
   review={\MR{3949030}},
   doi={10.1007/s10240-019-00106-9},
}

\bib{BM20}{article}{
   author={Blumberg, Andrew J.},
   author={Mandell, Michael A.},
   title={Localization for $THH(ku)$ and the topological Hochschild and
   cyclic homology of Waldhausen categories},
   journal={Mem. Amer. Math. Soc.},
   volume={265},
   date={2020},
   number={1286},
   pages={v+100},
   issn={0065-9266},
   isbn={978-1-4704-4178-4; 978-1-4704-6140-9},
   review={\MR{4096617}},
   doi={10.1090/memo/1286},
}

\bib{Cla17}{misc}{
      author={Clausen, D.}
       title={A K-theoretic approach to Artin maps},
        date={2017},
        note={\arxivlink{1703.07842}},
}

\bib{DM96}{article}{
   author={Dundas, Bj\o rn Ian},
   author={McCarthy, Randy},
   title={Topological Hochschild homology of ring functors and exact
   categories},
   journal={J. Pure Appl. Algebra},
   volume={109},
   date={1996},
   number={3},
   pages={231--294},
   issn={0022-4049},
   review={\MR{1388700}},
   doi={10.1016/0022-4049(95)00089-5},
}

\bib{Dun98}{article}{
   author={Dundas, Bj\o rn Ian},
   title={$K$-theory theorems in topological cyclic homology},
   journal={J. Pure Appl. Algebra},
   volume={129},
   date={1998},
   number={1},
   pages={23--33},
   issn={0022-4049},
   review={\MR{1626647}},
   doi={10.1016/S0022-4049(97)00083-2},
}

\bib{GKRW23}{misc}{
      author={Galatius, S.},
	author={Kupers, A.},
	author={Randal-Williams, O.},
       title={Cellular $E_k$-algebras},
        date={2023},
        note={\arxivlink{1805.07184}},
}

\bib{HW22}{article}{
   author={Hahn, Jeremy},
   author={Wilson, Dylan},
   title={Redshift and multiplication for truncated Brown-Peterson spectra},
   journal={Ann. of Math. (2)},
   volume={196},
   date={2022},
   number={3},
   pages={1277--1351},
   issn={0003-486X},
   review={\MR{4503327}},
   doi={10.4007/annals.2022.196.3.6},
}

\bib{Hes96}{article}{
   author={Hesselholt, Lars},
   title={On the $p$-typical curves in Quillen's $K$-theory},
   journal={Acta Math.},
   volume={177},
   date={1996},
   number={1},
   pages={1--53},
   issn={0001-5962},
   review={\MR{1417085}},
   doi={10.1007/BF02392597},
}

\bib{HM03}{article}{
   author={Hesselholt, Lars},
   author={Madsen, Ib},
   title={On the $K$-theory of local fields},
   journal={Ann. of Math. (2)},
   volume={158},
   date={2003},
   number={1},
   pages={1--113},
   issn={0003-486X},
   review={\MR{1998478}},
   doi={10.4007/annals.2003.158.1},
}

\bib{Kat89}{article}{
   author={Kato, Kazuya},
   title={Logarithmic structures of Fontaine-Illusie},
   conference={
      title={Algebraic analysis, geometry, and number theory},
      address={Baltimore, MD},
      date={1988},
   },
   book={
      publisher={Johns Hopkins Univ. Press, Baltimore, MD},
   },
   isbn={0-8018-3841-X},
   date={1989},
   pages={191--224},
   review={\MR{1463703}},
}

\bib{Kha22}{article}{
   author={Khan, Adeel A.},
   title={K-theory and G-theory of derived algebraic stacks},
   journal={Jpn. J. Math.},
   volume={17},
   date={2022},
   number={1},
   pages={1--61},
   issn={0289-2316},
   review={\MR{4397935}},
   doi={10.1007/s11537-021-2110-9},
}

\bib{Kos22}{misc}{
      author={Koshikawa, T.},
       title={Logarithmic Prismatic Cohomology I},
        date={2022},
        note={\arxivlink{2007.14037}},
}

\bib{KY23}{misc}{
      author={Koshikawa, T.},
	author={Yao, Z.}
       title={Logarithmic Prismatic Cohomology II},
        date={2023},
        note={\arxivlink{2306.00364}},
}

\bib{KS04}{article}{
   author={Kato, Kazuya},
   author={Saito, Takeshi},
   title={On the conductor formula of Bloch},
   journal={Publ. Math. Inst. Hautes \'Etudes Sci.},
   number={100},
   date={2004},
   pages={5--151},
   issn={0073-8301},
   review={\MR{2102698}},
   doi={10.1007/s10240-004-0026-6},
}

\bib{Lun21}{article}{
   author={Lundemo, Tommy},
   title={On the relationship between logarithmic TAQ and logarithmic THH},
   journal={Doc. Math.},
   volume={26},
   date={2021},
   pages={1187--1236},
   issn={1431-0635},
   review={\MR{4324464}},
}

\bib{Lur15}{misc}{
      author={Lurie, J.},
       title={Rotation invariance in Algebraic K-theory},
        date={2015},
        note={\url{https://www.math.ias.edu/~lurie/papers/Waldhaus.pdf}},
}

\bib{Lur17}{misc}{
      author={Lurie, J.},
       title={Higher Algebra},
        date={2017},
        note={\url{https://people.math.harvard.edu/~lurie/papers/HA.pdf}},
}

\bib{McC24}{article}{
   author={McCandless, Jonas},
   title={On curves in K-theory and TR},
   journal={J. Eur. Math. Soc. (JEMS)},
   volume={26},
   date={2024},
   number={11},
   pages={4315--4373},
   issn={1435-9855},
   review={\MR{4780484}},
   doi={10.4171/jems/1347},
}

\bib{MSV97}{article}{
   author={McClure, J.},
   author={Schw\"anzl, R.},
   author={Vogt, R.},
   title={$THH(R)\cong R\otimes S^1$ for $E_\infty$ ring spectra},
   journal={J. Pure Appl. Algebra},
   volume={121},
   date={1997},
   number={2},
   pages={137--159},
   issn={0022-4049},
   review={\MR{1473888}},
   doi={10.1016/S0022-4049(97)00118-7},
}

\bib{Nik16}{misc}{
   author={Nikolaus, T.},
   title={Stable $\infty$-Operads and the multiplicative Yoneda lemma.},
   date={2016},
   note={\arxivlink{1608.02901}},
}

\bib{NS18}{article}{
   author={Nikolaus, Thomas},
   author={Scholze, Peter},
   title={On topological cyclic homology},
   journal={Acta Math.},
   volume={221},
   date={2018},
   number={2},
   pages={203--409},
   issn={0001-5962},
   review={\MR{3904731}},
   doi={10.4310/ACTA.2018.v221.n2.a1},
}

\bib{Par25}{misc}{
   author={Park, D.},
   title={Construction of logarithmic cohomology theories I.},
   date={2025},
   note={\arxivlink{2503.01043}},
}

\bib{Qui73}{article}{
   author={Quillen, Daniel},
   title={Higher algebraic $K$-theory. I},
   conference={
      title={Algebraic $K$-theory, I: Higher $K$-theories},
      address={Proc. Conf., Battelle Memorial Inst., Seattle, Wash.},
      date={1972},
   },
   book={
      series={Lecture Notes in Math.},
      volume={Vol. 341},
      publisher={Springer, Berlin-New York},
   },
   date={1973},
   pages={85--147},
   review={\MR{0338129}},
}

\bib{Rog09}{article}{
   author={Rognes, John},
   title={Topological logarithmic structures},
   conference={
      title={New topological contexts for Galois theory and algebraic
      geometry (BIRS 2008)},
   },
   book={
      series={Geom. Topol. Monogr.},
      volume={16},
      publisher={Geom. Topol. Publ., Coventry},
   },
   date={2009},
   pages={401--544},
   review={\MR{2544395}},
   doi={10.2140/gtm.2009.16.401},
}

\bib{Rog14}{article}{
   author={Rognes, John},
   title={Algebraic $K$-theory of strict ring spectra},
   conference={
      title={Proceedings of the International Congress of
      Mathematicians---Seoul 2014. Vol. II},
   },
   book={
      publisher={Kyung Moon Sa, Seoul},
   },
   isbn={978-89-6105-805-6},
   isbn={978-89-6105-803-2},
   date={2014},
   pages={1259--1283},
   review={\MR{3728661}},
}

\bib{RSS15}{article}{
   author={Rognes, John},
   author={Sagave, Steffen},
   author={Schlichtkrull, Christian},
   title={Localization sequences for logarithmic topological Hochschild
   homology},
   journal={Math. Ann.},
   volume={363},
   date={2015},
   number={3-4},
   pages={1349--1398},
   issn={0025-5831},
   review={\MR{3412362}},
   doi={10.1007/s00208-015-1202-3},
}

\bib{RSS18}{article}{
   author={Rognes, John},
   author={Sagave, Steffen},
   author={Schlichtkrull, Christian},
   title={Logarithmic topological Hochschild homology of topological
   $K$-theory spectra},
   journal={J. Eur. Math. Soc. (JEMS)},
   volume={20},
   date={2018},
   number={2},
   pages={489--527},
   issn={1435-9855},
   review={\MR{3760301}},
   doi={10.4171/JEMS/772},
}

\bib{RSS25}{misc}{
   author={Rognes, J.},
   author={Sagave, S.},
   author={Schlichtkrull, C.},
   title={Logarithmic topological cyclic homology.},
   date={2025},
   note={\arxivlink{2506.08492}},
}

\bib{RSW25}{misc}{
   author={Ramzi, M.},
   author={Sosnilo, V.},
   author={Winges, C.},
   title={Localizing motives as a localization of categories.},
   date={2025},
   note={\arxivlink{2503.11338}},
}

\bib{SS03}{article}{
   author={Schwede, Stefan},
   author={Shipley, Brooke},
   title={Stable model categories are categories of modules},
   journal={Topology},
   volume={42},
   date={2003},
   number={1},
   pages={103--153},
   issn={0040-9383},
   review={\MR{1928647}},
   doi={10.1016/S0040-9383(02)00006-X},
}

\bib{Sto20}{article}{
   author={Stonek, Bruno},
   title={Higher topological Hochschild homology of periodic complex
   K-theory},
   journal={Topology Appl.},
   volume={282},
   date={2020},
   pages={107302, 43},
   issn={0166-8641},
   review={\MR{4116834}},
   doi={10.1016/j.topol.2020.107302},
}

\bib{Wal85}{article}{
   author={Waldhausen, Friedhelm},
   title={Algebraic $K$-theory of spaces},
   conference={
      title={Algebraic and geometric topology},
      address={New Brunswick, N.J.},
      date={1983},
   },
   book={
      series={Lecture Notes in Math.},
      volume={1126},
      publisher={Springer, Berlin},
   },
   isbn={3-540-15235-0},
   date={1985},
   pages={318--419},
   review={\MR{0802796}},
   doi={10.1007/BFb0074449},
}

\end{biblist}
\end{bibdiv}

\end{document}